\newtheorem{theorem}{Theorem}[section]
\newtheorem{lemma}[theorem]{Lemma}
\newtheorem{proposition}[theorem]{Proposition}
\newtheorem{corollary}[theorem]{Corollary}
\theoremstyle{definition}
\newtheorem{definition}[theorem]{Definition}
\newtheorem{definitions}[theorem]{Definitions}
\newtheorem{example}[theorem]{Example}
\newtheorem{conjecture}[theorem]{Conjecture}
\newtheorem{remark}[theorem]{Remark}
\newtheorem{question}[theorem]{Question}
\newtheorem{notation}[theorem]{Notation}
\newtheorem{claim}{Claim}
\let\bb\mathbb
\let\frac=\dfrac
\let\cal\mathcal
\def\cl{\mathop{\rm Cl}\nolimits}
\def\inte{\mathop{\rm Int}\nolimits}
\def\Cl{\mathrm{Cl}}
\def\Int{\mathrm{Int}}
\def\B{\mathcal{B}}
\def\A{\mathcal{A}}
\def\Tp{Todd complete}
\def\Op{Oxtoby complete}
\def\SOp{S\'anchez-Okunev complete}
\def\Telgarsky{Telg\'arsky complete}
\def\N{\mathbb{N}}
\def\R{\mathbb{R}}
\def\cc{countably compact}
\def\ccness{countable compactness}
\def\sp#1{strongly {#1} complete}
\def\sOp{\sp{Oxtoby}}
\def\sSOp{\sp{S\'anchez-Okunev}}
\def\sTep{\sp{Telg\'arsky}}
\def\sTp{\sp{Todd}}
\begin{document}
\title
[Completeness and compactness properties]
{Completeness and compactness properties in metric 
spaces, topological groups and function spaces} 
\author[A. Dorantes-Aldama]{Alejandro Dorantes-Aldama}
\address{Department of Mathematics, Faculty of Science, Ehime University,
  Matsuyama 790-8577, Japan}
\email{alejandro\_dorantes@ciencias.unam.mx}
\thanks{The first listed author was supported by CONACyT: Estancias Posdoctorales al Extranjero propuesta No. 263464}

\author[D. Shakhmatov]{Dmitri Shakhmatov}
\address{Division of Mathematics, Physics and Earth Sciences\\
Graduate School of Science and Engineering\\
Ehime University, Matsuyama 790-8577, Japan}
\email{dmitri.shakhmatov@ehime-u.ac.jp}
\thanks{The second listed author was partially supported by the Grant-in-Aid for Scientific Research~(C) No.~26400091 by the Japan Society for the Promotion of Science (JSPS)}

\dedicatory{Dedicated to Mikhail Tkachenko on the occasion of his 60th anniversary} 

\begin{abstract} 
We prove that many completeness properties coincide in metric spaces, precompact groups and dense subgroups of products of separable metric groups. We apply these results to function spaces $C_p(X,G)$ of $G$-valued continuous functions on a space $X$ with the topology of pointwise convergence, for a separable metric group $G$. Not only the results but also the proofs themselves are novel even in the classical case when $G$ is the real line.

A space $X$ is {\em weakly pseudocompact\/} if it is $G_\delta $-dense in 
at least one of its compactifications. A topological group $G$ is {\em precompact\/} if it is topologically isomorphic to 
a subgroup of a compact group. We prove that every weakly pseudocompact 
precompact 
topological group is pseudocompact, thereby answering positively a question of Tkachenko. 
\end{abstract}

\maketitle

{\em All topological spaces are considered to be Tychonoff and all topological groups Hausdorff.\/}

The symbol $\N$ denotes the set of natural numbers, $\R$ denotes the set of real numbers.   
If $f:X\to Y$ is a function and $A$ is a subset of $Y$, then we let $f^{\leftarrow }(A)=\{x\in X:f(x)\in A\}$. 

Let $X$ be a topological space. For
a subset $A$ of $X$, we use $\Cl_X(A)$ and $\Int_X(A)$ to denote the closure and the interior of $A$ in $X$, respectively.
A set of the form $f^{\leftarrow}(\{0\})$ for some real-valued continuous function $f$ on $X$ is called a {\em zero-set\/} of $X$.
Symbol $\beta X$ denotes the Stone-\v{C}ech compactification of
$X$.

For a topological group $G$, we denote by $\varrho G$ the Raikov completion of $G$.

\section{Introduction}

A product of two metric spaces having Baire property need not be Baire.
This result prompted for a search of proper subclasses $\mathscr{C}$ of the class of Baire spaces 
which 
are 
closed under taking arbitrary Cartesian products; one may call such $\mathscr{C}$ a productively Baire class. The first productively Baire class was introduced by J.C. Oxtoby \cite{O} under the name {\em pseudocomplete\/} spaces. A potentially wider class of such spaces was considered by A.R. Todd \cite{T} who used the same term ``pseudocomplete'' to denote the spaces from his class. In his study of topological games, R. Telg\'arsky \cite{Te} considered another class $\mathscr{C}$ of productively Baire spaces
such that Player II has a Markov winning strategy in the Choquet game on every space from the class $\mathscr{C}$.
Yet another productively Baire property was invented  by F. S\'anchez-Texis and O. Okunev \cite{SO} in their recent work on weakly pseudocompact spaces.

The four properties described above are defined in terms of the existence of a certain sequence
$\{{\cal B}_n:n\in\N\}$ of families $\B_n$ of subsets of $X$ such that each $\B_n$ contains arbitrary small sets with non-empty interior in $X$; see Definition 
\ref{four:properties}. 
The main goal of this paper is to study these four productively Baire properties (as well as their variations obtained by taking all $\B_n$ to be the same)
in metric spaces, topological groups and function spaces.
We also introduce four compactness-type properties closely related to the four completeness properties defined above obtained by disposing of the sequence $\{{\cal B}_n:n\in\N\}$ completely and replacing it 
with
a family ${\cal F}$ of open or zero-sets in $X$ 
such that every decreasing sequence in ${\cal F}$ has non-empty intersection; see Definition \ref{def:compact:properties}.

The second source of inspiration for this paper comes from a question related to weakly pseudocompact spaces introduced
by S. Garc\'ia-Ferreira and
A. Garc\'ia-M\'aynez  in
 \cite{GG}.
\begin{definition}
\label{def:weakly:psc}
\begin{itemize}
\item[(i)]
A subspace $X$ of a space $Y$ is {\em $G_\delta $-dense} in $Y$ 
provided that every non-empty $G_\delta $-subset of $Y$ intersects $X$.
\item[(ii)]
A space $X$ is said to be {\em weakly pseudocompact} if it is $G_\delta $-dense in some compact space $Y$. 
\end{itemize}
\end{definition}

A well-known result of E. Hewitt \cite{H} states that a space $X$ is pseudocompact if and only if
$X$ is $G_\delta $-dense in its Stone-\v{C}ech compactification $\beta X$. 
Therefore, pseudocompact spaces are weakly pseudocompact.

In a private conversation with the second author during the 1st Pan Pacific International Conference on Topology and Applications (PPICTA) held on November 25--30, 2015, 
at Minnan Normal University (Zhangzhou, China),
M. Tkachenko proposed the following conjecture:

\begin{conjecture}
\label{conjecture:Tkachenko}
If a subgroup $G$ of a compact group is $G_\delta$-dense in some compact space,
then $G$ is $G_\delta$-dense also in its Stone-\v{C}ech compactification $\beta G$. 
Equivalently, a weakly pseudocompact precompact topological group is pseudocompact. 
\end{conjecture}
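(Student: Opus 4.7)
The plan is to reduce, via the classical characterization of pseudocompactness in precompact groups due to Comfort and Ross (a precompact topological group is pseudocompact if and only if it is $G_\delta$-dense in its Raikov completion), to showing that $G$ is $G_\delta$-dense in $K = \varrho G$. Note that $K$ is compact because $G$ is precompact, so $K$ is itself a Hausdorff compactification of $G$ and the target reformulation is an honest statement about a compactification.

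I argue by contradiction: assume $G$ is $G_\delta$-dense in some compactification $Y$ but not in $K$. By the standard structure theory of compact groups (every $G_\delta$ neighborhood of the identity in $K$ contains a closed normal $G_\delta$ subgroup, whose quotient is a compact metrizable group), one obtains a compact metric group $M$ and a continuous surjective homomorphism $\pi\colon K\to M$ with $\pi(G)\subsetneq M$; fix $m_0\in M\setminus \pi(G)$. The key intermediate goal is to extend the continuous map $\pi|_G\colon G\to M$ to a continuous map $\widetilde{\pi}\colon Y\to M$. Granting such $\widetilde{\pi}$, the preimage $\widetilde{\pi}^{-1}(\{m_0\})$ is a closed $G_\delta$ of $Y$ (since $M$ is metrizable), non-empty (by surjectivity of $\widetilde{\pi}$), and disjoint from $G$ (since $\widetilde{\pi}|_G = \pi|_G$ avoids $m_0$), directly contradicting the $G_\delta$-density of $G$ in $Y$.

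The main obstacle is the extension of $\pi|_G$ from $G$ to all of $Y$. Equivalently, letting $\bar{\pi}\colon \beta G\to M$ be the Stone--\v{C}ech extension of $\pi|_G$ and $q\colon \beta G\to Y$ the canonical surjection, one must show that $\bar{\pi}$ is constant on each fibre of $q$. A preliminary oscillation analysis exploiting the second-countability of $M$ writes the set $E\subseteq Y$ of points at which the tentative extension fails to be continuous as an $F_\sigma$ set disjoint from $G$: it is a countable union, indexed by pairs of basic open subsets of $M$ with disjoint closures, of ``two-valued obstruction'' sets $\Cl_Y(\pi|_G^{\leftarrow}(V_1))\cap \Cl_Y(\pi|_G^{\leftarrow}(V_2))$, each of which is closed and cannot meet $G$.

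The subtle final step, and what I expect to be the technical heart of the proof, is to upgrade this $F_\sigma$ obstruction to an actual non-empty $G_\delta$ subset of $Y\setminus G$ (which immediately contradicts $G_\delta$-density of $G$). The plan here is to invoke the group structure of $G$: the left-translation action of $G$ on itself extends continuously to $\beta G$ and provides a strong homogeneity of $Y$ with respect to $G$, which should allow one to ``spread'' any pointwise obstruction across a large subset of $Y$. Combining this homogenization with the second-countability of $M$ (which controls the shape of $E$) and a Baire-category argument inside the compact Hausdorff space $Y$ should produce the required non-empty $G_\delta$ inside $E$, completing the contradiction.
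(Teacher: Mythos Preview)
Your reduction via Comfort--Ross and the oscillation analysis producing the $F_\sigma$ obstruction set $E \subseteq Y \setminus G$ are both correct, and the case where the extension $\widetilde{\pi}$ exists (i.e.\ $E=\emptyset$) closes cleanly. The genuine gap is the case $E \neq \emptyset$. You propose to use that ``the left-translation action of $G$ on itself extends continuously to $\beta G$ and provides a strong homogeneity of $Y$ with respect to $G$,'' but $Y$ is an \emph{arbitrary} compactification in which $G$ happens to be $G_\delta$-dense: there is no reason for the $G$-action on $\beta G$ to descend through the quotient map $q\colon \beta G \to Y$, so no $G$-action on $Y$ is available. Without it, the Baire-category sketch has nothing to grip: $E$ is merely an $F_\sigma$ subset of a compact Hausdorff space disjoint from $G$, closed sets in such a $Y$ need not be $G_\delta$, and there is no general mechanism for locating a non-empty $G_\delta$ of $Y$ inside $E$. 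This is not a routine technicality to be filled in later---it is exactly the difficulty the conjecture poses, and your outline does not resolve it.

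The paper takes an entirely different route that never attempts to extend maps to $Y$. Weak pseudocompactness is used only once, to show (Proposition~\ref{weakly:psc:are:SO}) that $G$ carries a countably compact pseudobase of zero-sets and is therefore S\'anchez--Okunev complete. From there the argument lives inside $G$ and its separable metric quotients: precompactness gives $\mathbb{R}$-factorizability, and a factorization theorem for complete sequences (Theorem~\ref{reflecting:pseudocompletness}, Theorem~\ref{SOp:are*polish:factorizable}, Corollary~\ref{polish_fact}) upgrades this to Polish factorizability---every continuous homomorphism from $G$ to a separable metric group factors through a Polish quotient of $G$. A Polish quotient of a precompact group is compact, so every real-valued continuous function on $G$ factors through a compact group and is bounded (Proposition~\ref{SOp:precompact:are:psc}).
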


The paper is organized as follows. In Sections \ref{section:1} and \ref{Compactness-like properties} we introduce relevant
completeness and compactness properties, respectively, and explain basic relationships between them. 

Our main results are assembled in Section \ref{results}. We prove that majority of completeness and compactness properties studied in this paper coincide for metric spaces (Theorem \ref{coincidence:for:metric:spaces}), and they coincide 
with the classical \v{C}ech completeness for metric topological groups; see
Theorem \ref{metric:Todd:are:Cech-complete}. 
In Theorem \ref{equivalent:conditions:for:precompact} we show that many of completeness and compactness properties become equivalent to pseudocompactness in precompact groups. The same theorem confirms Conjecture \ref{conjecture:Tkachenko} in the positive; see the equivalence of  items (i) and (ii) in Theorem \ref{equivalent:conditions:for:precompact}.
Finally, we prove in Theorem \ref{equivalence:for:dense:subgroups:without:cech}
that most of completeness and compactness properties coincide for dense subgroups of a Cartesian product of separable metric groups, and we offer complete characterization of when this happens.

Section \ref{C_p-section} is devoted to applications of our main results to function spaces $C_p(X,G)$ of all $G$-valued continuous maps on a space $X$ in the topology of pointwise convergence, for a separable metric topological group $G$.
Highlights here include the coincidence of various completeness properties in $C_p(X,G)$ for an arbitrary separable metric group $G$ and characterization of these completeness properties in terms of internal properties of the space $X$ and that of the topological group $G$, under a mild assumption that $C_p(X,G)$ is dense in $G^X$; see Theorem \ref{Cp:theorem}. In the special case when $G$ is compact metric, these completeness properties become equivalent to pseudocompactness of $C_p(X,G)$; see Corollary \ref{Cp:corolary}.
This strengthens significantly \cite[Theorem 7.2]{DRT}.
In the special case when $G$ is the real line $\mathbb{R}$, our results imply the classical characterization of (Oxtoby) pseudocompleteness of $C_p(X)$ in terms of the properties of $X$ due to V.V. Tkachuk \cite[Theorem 4.1]{Tk}; see Corollary \ref{Theorem:4.1:Tk}. A special feature of our results for function spaces $C_p(X,G)$ is that their proofs are completely devoid of any ``function spaces machinery'' present in the classical proofs. Indeed, only the denseness of $C_p(X,G)$ in $G^X$ is used to derive our $C_p$-results from the correspondent results in Section \ref{results} about dense subgroups in products of separable metric groups.
Therefore, it is fair to say that we offer completely different ``function spaces free'' proofs of results in $C_p$-theory even in the classical case when $G$ is taken to be the real line $\mathbb{R}$.

Section \ref{section:metric} is devoted to proofs of metric Theorems \ref{coincidence:for:metric:spaces} and 
\ref{metric:Todd:are:Cech-complete}. Section \ref{technical:lemmas} contains 
some technical lemmas needed later.
The heart of the paper is in Section \ref{factorization:section} which provides a factorization theorem for (Todd) pseudocompleteness which says that, under certain assumptions, a space having a certain completeness property has ``sufficiently many'' continuous maps onto (Todd) pseudocomplete separable metric spaces.
Carrying  this over to topological groups in Section \ref{sec:polish;factorizable}, we introduce the notion of a Polish factorizable group as such a group 
that has ``sufficiently many'' continuous homomorphisms onto \v{C}ech-complete 
separable metric (aka Polish) groups; see
Definition \ref{def:polish:factorizable}.
Corollary \ref{polish_fact} is the main result in Section \ref{sec:polish;factorizable}. In Section \ref{sec:proofs} we investigate completeness and compactness properties in Cartesian products. A proof of Theorem \ref{equivalence:for:dense:subgroups:without:cech} is given there. Open questions are collected in the last Section \ref{sec:questions}.

\section{(Pseudo)completeness properties}
\label{section:1}

\begin{definition}
\label{def:nested}
We shall say that a sequence $\{B_n:n\in\N\}$ of subsets of a topological space $X$ is {\em nested\/} provided that 
$\Cl_X (B_{n+1}) \subseteq \Int _X (B_n)$ 
for all $n\in\mathbb{N}$.
\end{definition}

\begin{remark}
\label{decreasing:vs:nested}
\begin{itemize}
\item[(i)] Every nested sequence is decreasing.
\item[(ii)] A decreasing sequence consisting of clopen sets is nested.
\end{itemize}
\end{remark}

\begin{remark}
\label{rem:nested}
If $\{B_n:n\in\N\}$ is a nested sequence in a topological space $X$, then
$\bigcap_{n\in\N} B_n=
\bigcap_{n\in\N} \Int_X(B_n)=
\bigcap_{n\in\N} \Cl_X(B_n)$.
\end{remark}

\begin{definition}
\label{def:pi-pseudobase}
Recall that  a family $\cal B$ of subsets of a topological space $X$ 
is called
a {\em pseudobase} of $X$ if it satisfies two conditions:
\begin{itemize}
\item[(i)] 
$\Int_X(B)\not=\emptyset$ for all $B\in \cal B$;
\item[(ii)]
every non-empty open subset of $X$ contains some element of $\cal B$.
\end{itemize}
\end{definition}

\begin{definitions}
\label{def:nested:and:complete:sequences}
Let $X$ be a topological space. For each $n\in\N$ let $\cal{B}_n$ be a family of subsets of $X$.
We shall say that 
a sequence $\{\cal{B}_n: n\in\N\}$ is:
\begin{itemize}
\item[(i)]
 {\em centered\/} provided that 
$\bigcap_{n\in\N}B_n \not= \emptyset$ holds
for every nested sequence $\{B_n:n\in\N\}$ such that 
$B_n \in \cal{B}_n$ for all $n\in\mathbb{N}$;
\item[(ii)]
 {\em complete\/} provided that 
it is centered and each 
$\cal{B}_n$ is a pseudobase for $X$.
\end{itemize}
\end{definitions}

\begin{remark}
\label{sufamily:of:a:centered:family:is:centered}
Let $\{{\cal A}_n:n\in\N\}$ and $\{{\cal B}_n:n\in\N\}$ be two sequences 
of families of subsets of a topological space $X$ such that ${\cal A}_n\subseteq {\cal B}_n$ for each $n\in\N$.

(i)
If $\{{\cal B}_n:n\in\N\}$ is a centered sequence in $X$,
then the sequence $\{{\cal A}_n:n\in\N\}$ is centered as well.

(ii) If $\{{\cal B}_n:n\in\N\}$ is a complete sequence in $X$
and each $\{{\cal A}_n:n\in\N\}$ is a pseudobase for $X$, then 
$\{{\cal A}_n:n\in\N\}$ is a complete sequence in $X$.
\end{remark}

\begin{definition}
\label{four:properties}
A topological space $X$ will be called:
\begin{itemize}
\item[(i)] {\em \Tp\/} if $X$ has a complete sequence;
\item[(ii)] {\em \Op\/} if $X$ has a complete sequence $\{\cal{B}_n: n\in\N\}$
such that each 
$\cal{B}_n$ consists of open subsets of $X$;
\item[(iii)] {\em \SOp\/} if $X$ has a complete sequence $\{\cal{B}_n: n\in\N\}$
such that each 
$\cal{B}_n$ consists of zero-sets in $X$;
\item[(iv)] {\em \Telgarsky\/} if $X$ has a complete sequence $\{\cal{B}_n: n\in\N\}$
such that each 
$\cal{B}_n$ is a base of $X$.
\end{itemize}
\end{definition}

Clearly, $\mbox{\SOp} \to \mbox{\Tp}$ and 
$$
\mbox{\Telgarsky} \to \mbox{\Op} \to \mbox{\Tp}.
$$
The question whether the last implication can be reversed remains open; 
see \cite[Question 6.3]{T}.
Finally, all \Tp\  spaces have the Baire property; see \cite[Theorem 1.3]{T}.

One can also introduce a (theoretically) stronger version of each of the four 
pseudocompleteness properties from Definition \ref{four:properties} by requiring that all
${\cal B}_n$ in the correspondent complete sequence $\{{\cal B}_n:n\in\N\}$ 
are the same.
We shall distinguish these properties by adding the adjective ``strongly'' in front of their names.

The notion of \Op ness  has appeared first  in \cite{O} under the name ``pseudocompleteness'',
while that of \Tp ness has appeared later in \cite{T} (under the same name ``pseudocompleteness'').
The notion from Definition \ref{four:properties}(iii) has appeared 
recently in 
\cite{SO} where it was related to the notion of weak pseudocompactness to be discussed later in 
Section \ref{Compactness-like properties}.

The notion from Definition \ref{four:properties}(iv) was considered first by Telg\'arsky in \cite{Te} who proved in \cite[Theorem 2.9]{Te}
that 
Player II has a Markov winning strategy in the Choquet game on every \Telgarsky\ space (in our terminology). Telg\'arsky also considered a stronger version of his property by requiring that all $\cal B_n$ in the correspondent complete sequence
$\{\cal B_n:n\in\N\}$ are the same; in other words, he considered the class of what we call here strongly \Telgarsky\ spaces.

\begin{lemma}
\label{centered:families:in:G-delta-sets}
Let $\{V_n:n\in\N\}$ be a sequence of subsets of a countably compact space $Y$
and let $X=\bigcap \{V_n:n\in\N\}$.
For every $n\in\N$, let 
\begin{equation}
\label{B:in:V_n}
\B_n=\{B\subseteq X: B\not=\emptyset\mbox{ and }\Cl_Y (B)\subseteq V_n\}.
\end{equation}
Then:
\begin{itemize}
\item[(i)] $\{\B_n:n\in\N\}$ is a centered sequence in $X$;
\item[(ii)] if $\A_n$ is a pseudobase of $X$ such that $\A_n\subseteq \B_n$ for every $n\in\N$, then $\{\A_n:n\in\N\}$ is a complete sequence in $X$.
\end{itemize}
\end{lemma}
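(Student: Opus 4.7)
\medskip

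\noindent\textbf{Proof plan.} The heart of the statement is part (i); part (ii) will then be a one-line consequence of the definitions and Remark \ref{sufamily:of:a:centered:family:is:centered}. The idea for (i) is to use countable compactness of $Y$ to locate a point common to all the $Y$-closures of the $B_n$, then exploit the strict nesting (i.e.\ $\Cl_X$ contained in $\Int_X$) to push that point inside each $B_n$ itself.

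To prove (i), I would start with an arbitrary nested sequence $\{B_n:n\in\N\}$ with $B_n\in\B_n$ for all $n$. By Remark \ref{decreasing:vs:nested}(i) this sequence is decreasing, so $\{\Cl_Y(B_n):n\in\N\}$ is a decreasing sequence of non-empty closed subsets of $Y$. Countable compactness of $Y$ then delivers a point
\[
x\in\bigcap_{n\in\N}\Cl_Y(B_n).
\]
The membership hypothesis $B_n\in\B_n$ gives $\Cl_Y(B_n)\subseteq V_n$ for every $n$, so $x\in\bigcap_{n\in\N}V_n=X$.

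The next step is to upgrade $x\in\Cl_Y(B_n)$ to $x\in B_n$, and this is where the nestedness matters. Since each $B_n$ sits inside $X$, the standard subspace identity $\Cl_X(B_n)=\Cl_Y(B_n)\cap X$ holds. Hence, for every $n\in\N$,
\[
x\in\Cl_Y(B_{n+1})\cap X=\Cl_X(B_{n+1})\subseteq\Int_X(B_n)\subseteq B_n,
\]
where the first inclusion is Definition \ref{def:nested} applied to the nested sequence. Combining this with the evident membership $x\in\Cl_X(B_0)\supseteq\emptyset$ and re-indexing (or noting $x\in\Cl_X(B_1)\subseteq B_0$ separately), we conclude $x\in\bigcap_{n\in\N}B_n$, proving that $\{\B_n:n\in\N\}$ is centered.

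For (ii), the assumption $\A_n\subseteq\B_n$ together with part (i) and Remark \ref{sufamily:of:a:centered:family:is:centered}(i) shows that $\{\A_n:n\in\N\}$ is centered. Since each $\A_n$ is a pseudobase of $X$ by hypothesis, Definition \ref{def:nested:and:complete:sequences}(ii) immediately yields that $\{\A_n:n\in\N\}$ is a complete sequence in $X$.

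The only mildly delicate point in the argument is making sure the point produced by countable compactness actually lands in $X$, and then translating its $Y$-closure membership into genuine membership in the $B_n$; both issues are resolved by the two inclusions $\Cl_Y(B_n)\subseteq V_n$ and $\Cl_X(B_{n+1})\subseteq\Int_X(B_n)$, so no serious obstacle is expected.
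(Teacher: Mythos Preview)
Your proof is correct and follows essentially the same route as the paper's: both use countable compactness of $Y$ to get a point in $\bigcap_n \Cl_Y(B_n)$, observe it lies in $X$ via $\Cl_Y(B_n)\subseteq V_n$, and then use nestedness to pass from $\Cl_X(B_n)$ to $B_n$ (the paper packages this last step as an appeal to Remark~\ref{rem:nested}, whereas you unwind it directly). Part (ii) is handled identically in both.
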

\begin{proof}
(i)
Let $\{B_n:n\in\N\}$ be a nested sequence in $X$ such that 
$B_n \in \cal{B}_n$ for all $n\in\mathbb{N}$.
By 
Remark \ref{decreasing:vs:nested}(i),
the sequence $\{B_n:n\in\N\}$ is decreasing,
so
$\{\Cl_Y(B_n):n\in\N\}$ is a decreasing sequence of non-empty closed subsets of $Y$. Since $Y$ is countably compact, 
$\bigcap\{\Cl_Y(B_n):n\in\N\}\not=\emptyset$.
Since $B_n\in\B_n$, it follows from \eqref{B:in:V_n} that
$\Cl_Y(B_n)\subseteq V_n$, so
$\bigcap\{\Cl_Y(B_n):n\in\N\}\subseteq \bigcap \{V_n:n\in\N\}=X$.
Therefore,
\begin{equation}
\label{eq:3:int}
\emptyset\not=
\bigcap_{n\in\N}\Cl_Y(B_n)
=\left(\bigcap_{n\in\N}\Cl_Y(B_n)\right)\cap X
=\bigcap_{n\in\N}\Cl_Y(B_n)\cap X
=
\bigcap_{n\in\N}\Cl_X(B_n).
\end{equation}
Since $\{B_n:n\in\N\}$ is a nested sequence in $X$, it follows from Remark \ref{rem:nested}
that 
$\bigcap\{\Cl_X(B_n):n\in\N\}=\bigcap\{B_n:n\in\N\}$,
so
$\bigcap\{B_n:n\in\N\}\not=\emptyset$ by \eqref{eq:3:int}.

(ii)
It follows from 
(i)
and Remark \ref{sufamily:of:a:centered:family:is:centered}(i)
that $\{{\cal A}_n:n\in\N\}$ is a centered sequence in $X$. 
Since each $\A_n$ is a pseudobase of $X$, from
Definition 
\ref{def:nested:and:complete:sequences}(ii) we conclude that 
$\{{\cal A}_n:n\in\N\}$ is a complete sequence in $X$. 
\end{proof}

\begin{lemma}
\label{chec-Te}
\label{chec-SO}
A \v{C}ech-complete space is both \Telgarsky\ and \SOp.
\end{lemma}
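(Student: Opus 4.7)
The plan is to deduce both conclusions as a joint application of Lemma \ref{centered:families:in:G-delta-sets}. Since $X$ is \v{C}ech-complete, I can take $Y=\beta X$ (which is compact, hence countably compact) and write $X=\bigcap_{n\in\N} V_n$ for some open sets $V_n$ in $Y$. Defining $\B_n=\{B\subseteq X:B\not=\emptyset,\ \Cl_Y(B)\subseteq V_n\}$ as in \eqref{B:in:V_n}, part (i) of Lemma \ref{centered:families:in:G-delta-sets} already gives that $\{\B_n:n\in\N\}$ is centered. Thus both conclusions will follow from part (ii) of that lemma as soon as I exhibit, for each $n$, a subfamily $\A_n\subseteq\B_n$ that is a pseudobase of $X$ of the appropriate form---a base of open sets in the Telg\'arsky case, and a family of zero-sets in the S\'anchez-Okunev case.

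For the Telg\'arsky conclusion, fix $n\in\N$, a non-empty open subset $U$ of $X$, and a point $x\in U$. Pick an open subset $U'$ of $Y$ with $U'\cap X=U$; then $x\in U'\cap V_n$, which is open in the compact Hausdorff (hence regular) space $Y$, so I can choose an open $W\subseteq Y$ with $x\in W$ and $\Cl_Y(W)\subseteq U'\cap V_n$. The trace $W\cap X$ is then a non-empty open subset of $X$ contained in $U$ whose closure in $Y$ lies inside $V_n$, so $W\cap X\in\B_n$. Letting $\A_n$ consist of all such traces as $W$ varies and $x$ varies over $X$, I get a base of $X$ contained in $\B_n$, and Lemma \ref{centered:families:in:G-delta-sets}(ii) then certifies that $X$ is \Telgarsky.

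The S\'anchez-Okunev case requires a little more work, because I must squeeze a zero-set rather than an open set into $V_n$. Given $U$, $x$ and $W$ as above, I use complete regularity of $Y$ to choose a continuous $f:Y\to[0,1]$ with $f(x)=0$ and $f\equiv 1$ on $Y\setminus W$, and put $Z=\{y\in X:f(y)\leq 1/2\}$. Since $\max(f-1/2,0)$ restricts to a continuous real-valued function on $X$ whose zero-set is exactly $Z$, the set $Z$ is a zero-set of $X$; the open set $\{y\in X:f(y)<1/2\}$ witnesses that $\Int_X(Z)\not=\emptyset$ and contains $x$; and $\{y\in Y:f(y)\leq 1/2\}\subseteq W$, so $\Cl_Y(Z)\subseteq\Cl_Y(W)\subseteq V_n$, giving $Z\in\B_n$ and $Z\subseteq U$. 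Collecting all such $Z$ into $\A_n$ yields a pseudobase of $X$ consisting of zero-sets with $\A_n\subseteq\B_n$, and Lemma \ref{centered:families:in:G-delta-sets}(ii) again provides a complete sequence, so $X$ is \SOp.

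The only real obstacle is the zero-set construction in the \SOp\ case---the straightforward choice $\Cl_Y(W)\cap X$ produces a closed, but not obviously zero, subset of $X$, so I must intervene with a Urysohn-type function on the compactification and trim at the level $1/2$ to simultaneously obtain a zero-set, non-empty interior inside $U$, and closure contained in $V_n$.
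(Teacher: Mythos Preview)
Your proof is correct and follows essentially the same route as the paper's: both arguments realize $X$ as a $G_\delta$ in a compactification, invoke Lemma~\ref{centered:families:in:G-delta-sets}(ii), and exhibit the required pseudobases $\A_n\subseteq\B_n$ by tracing open sets (respectively, zero-sets with non-empty interior) from $Y$ whose $Y$-closures lie in $V_n$. The paper simply writes down the families $\A_n$ and asserts their properties, whereas you spell out the Urysohn-type construction in the S\'anchez--Okunev case; this is a cosmetic difference, not a substantive one.
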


\proof
Let $X$ be a \v{C}ech-complete space.
There exists a compact space $Y$ containing $X$ as a subspace and a family $\{V_n:n\in\N\}$ of open subsets of $Y$ such that $X=\bigcap\{V_n:n\in\N\}$; one can take the Stone-\v{C}ech compactification $\beta X$ of $X$ as $Y$. 
For each $n\in \N$, the family
$$
\cal A_n=\{U\cap X:U\textrm{ is a non-empty open set in } Y \mbox{ such that }\Cl_{Y}U\subseteq V_n\}
$$
is both
a base and a pseudobase in $X$ such that $\A_n\subseteq \B_n$, where $\B_n$ is as in \eqref{B:in:V_n}.
Thus, the sequence $\{{\cal A}_n:n\in\N\}$ is complete by 
Lemma \ref{centered:families:in:G-delta-sets}(ii), so $X$ is
\Telgarsky\ by Definition \ref{four:properties}(iv).

Similarly, for each $n\in \N$, the family
$$
\cal A_n=\{Z\cap X:Z\textrm{ is a zero-set in } Y \mbox{ such that }Z\subseteq V_n\mbox{ and }\Int_{Y}Z\not=\emptyset\}
$$
is a pseudobase in $X$ consisting of zero-sets such that $\A_n\subseteq \B_n$.
Arguing as above, we conclude that $\{{\cal A}_n:n\in\N\}$  is a complete sequence in $X$. Therefore, $X$ is \SOp\ 
by Definition \ref{four:properties}(iii).
\endproof

\begin{proposition}
\label{loc:psc:is:Telgarsky}
A locally pseudocompact space is strongly \Telgarsky.
\end{proposition}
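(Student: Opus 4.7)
The plan is to exhibit a single family $\mathcal{B}$ of open sets that simultaneously serves as a base of $X$ and makes the constant sequence $\mathcal{B}_n = \mathcal{B}$ centered; by Definition \ref{four:properties}(iv), this witnesses strong Telg\'arsky-completeness. The natural candidate is
$$
\mathcal{B} = \{U \subseteq X : U \text{ is open and non-empty, and } \Cl_X(U) \text{ is pseudocompact}\}.
$$

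First I would verify that $\mathcal{B}$ is a base. Let $x\in X$ and let $V$ be an open neighborhood of $x$. By local pseudocompactness, $x$ has a pseudocompact neighborhood $N$; since pseudocompactness of a dense subspace passes to its closure, $\Cl_X(N)$ is pseudocompact. Using that $X$ is Tychonoff (hence regular), choose an open $U$ with $x\in U \subseteq \Cl_X(U) \subseteq V\cap \Int_X(N)$. Then $\Cl_X(U)$ is the closure, inside the pseudocompact space $\Cl_X(N)$, of the non-empty open subset $U$, i.e.\ a regular closed subset of $\Cl_X(N)$. Since regular closed subsets of pseudocompact Tychonoff spaces are pseudocompact, $\Cl_X(U)$ is pseudocompact, so $U\in \mathcal{B}$, $x\in U \subseteq V$.

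Next I would check that the constant sequence $\mathcal{B}_n = \mathcal{B}$ is centered. Let $\{B_n : n\in\N\}$ be a nested sequence with each $B_n \in \mathcal{B}$. By Remark \ref{decreasing:vs:nested}(i), the sequence is decreasing, so $B_n \subseteq B_1 \subseteq \Cl_X(B_1)$ for every $n$. Set $Y=\Cl_X(B_1)$, which is pseudocompact. Each $B_n$ is a non-empty open subset of the Tychonoff space $Y$, and the sequence $\{B_n\}$ is decreasing. Applying the standard characterization that a Tychonoff space is pseudocompact if and only if every decreasing sequence of non-empty open sets has non-empty intersection of closures, we get $\bigcap_{n\in\N} \Cl_Y(B_n) \neq \emptyset$. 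Since $Y$ is closed in $X$, $\Cl_Y(B_n) = \Cl_X(B_n)$, so $\bigcap_{n\in\N} \Cl_X(B_n)\neq \emptyset$. By Remark \ref{rem:nested} applied to the nested sequence $\{B_n\}$, this common intersection equals $\bigcap_{n\in\N} B_n$, so $\bigcap_{n\in\N} B_n \neq \emptyset$, as needed.

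I do not expect a serious obstacle here; the only subtlety is the regular-closed-hereditary nature of pseudocompactness, which one needs in two spots (to obtain a pseudocompact closed neighborhood basis and to apply the open-sequence characterization of pseudocompactness inside $\Cl_X(B_1)$). Both are standard facts for Tychonoff spaces and can be invoked without proof.
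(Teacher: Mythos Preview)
Your proof is correct and follows essentially the same approach as the paper: the same family $\mathcal{B}$ of open sets with pseudocompact closure is used, and the centeredness argument via pseudocompactness of $\Cl_X(B_1)$ together with Remark~\ref{rem:nested} is identical. The only difference is that you supply a detailed justification that $\mathcal{B}$ is a base (invoking that regular closed subsets of pseudocompact Tychonoff spaces are pseudocompact), whereas the paper simply asserts this.
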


\proof
Let $X$ be a locally pseudocompact space. Then the family
$\cal B=\{U\subseteq X:U$ is a non-empty  open subset of $X$ and $\Cl_X(U)$ is pseudocompact$\}$ is a base of $X$.
  
Suppose that $\{U_n:n\in \N\}\subseteq {\cal B}$ is a nested sequence.
Then
$\{U_n:n\in \N \}$ is a decreasing sequence of non-empty open sets in the 
pseudocompact space $\Cl_X(U_1)$. 
Therefore, $\bigcap \{\Cl_X(U_n):n\in \N \}\not=\emptyset$. 
By Remark \ref{rem:nested}, $\bigcap \{U_n:n\in \N \}\not=\emptyset$ as well. 
\endproof

The following diagram summarizes relations between the classical notions mentioned above.

\begin{center} \medskip\hspace{1em}\xymatrix{
\text{Locally compact}\ar[r]\ar[d]&  \text{\v{C}ech-complete}\ar[r]\ar[dd]&\text{S\'anchez-Okunev}\ar[d]&\\
\text{Locally pseudocompact}\ar[d]&	&\text{Todd}\ar[r]&\text{Baire}\\
\text{Strongly Telg\'arsky}\ar[r]&  \text{Telg\'arsky}\ar[r]&\text{Oxtoby}\ar[u]&
  }
\end{center}
\begin{center}
Diagram 1.
\end{center}

\section{Compactness-like properties}
\label{Compactness-like properties}

\begin{definition}
A family $\mathcal{F}$ of non-empty subsets of a set $X$ will be called {\em \cc } if every decreasing 
sequence $\{F_n:n\in\N\}\subseteq \mathcal{F}$ 
has non-empty intersection.
\end{definition}

\begin{remark}
(i) {\em A space $X$ is countably compact if and only if the family of all non-empty closed sets in $X$ is \cc\/}. 

(ii) {\em A space $X$ is pseudocompact if and only if the family of all non-empty zero-sets in $X$ is \cc\/}; see \cite[5H(4)]{GJ}.
\end{remark}

The relevance of the notion of a \cc\ family to completeness properties from Section \ref{section:1} can be seen from the following 
\begin{remark}
\label{compact:are:strongly:complete}
{\em Let $\B$ be a family of subsets of a topological space $X$, and let $\B_n=\B$ for every $n\in\N$.\/}

(i) {\em If $\B$ is \cc, then the sequence
$\{\B_n:n\in\N\}$ is centered.\/}
Indeed, let $\{B_n:n\in\N\}$ be a nested sequence such that $B_n\in\B_n$ for every $n\in\N$.
Then 
the sequence $\{B_n:n\in\N\}\subseteq \B$ is decreasing by Remark \ref{decreasing:vs:nested}(i).
Since $\B$ is \cc, $\bigcap_{n\in\N} B_n\not=\emptyset$. 

(ii) 
{\em If  $\B$ consists of clopen sets in $X$ and $\{\B_n:n\in\N\}$ is centered, then $\B$ is \cc.\/}
Indeed, let 
$\{B_n:n\in\N\}$ be a decreasing sequence such that $B_n\in\B$ for every $n\in\N$.
Then $\{B_n:n\in\N\}$ is nested by Remark \ref{decreasing:vs:nested}(ii).
Since $\{\B_n:n\in\N\}$ is centered, $\bigcap_{n\in\N} B_n\not=\emptyset$. 

(iii)
{\em If $\B$ is a \cc\ pseudobase of $X$, then 
$\{\B_n:n\in\N\}$ is a complete sequence in $X$.\/} 
This follows from 
item (i) and Definition \ref{def:nested:and:complete:sequences}(ii).
\end{remark}

Item (iii) of this 
remark suggests the following generalization of properties that appeared in Definition \ref{four:properties}.

\begin{definition} 
\label{def:compact:properties}
A topological space $X$ will be called:
\begin{itemize}
\item[(i)] {\em Todd \cc\/} if $X$ has a \cc\/ pseudobase;
\item[(ii)] {\em Oxtoby \cc\/} if $X$ has a \cc\/ pseudobase consisting of open sets;
\item[(iii)] {\em S\'anchez-Okunev \cc\/} if $X$ has a \cc\/ pseudobase which consists of zero-sets of $X$;
\item[(iv)] {\em Telg\'arsky \cc\/} if $X$ has a \cc\/ base.  
\end{itemize}
\end{definition}

The following is an immediate corollary of Remark \ref{compact:are:strongly:complete}(i) and Definition \ref{def:compact:properties}.

\begin{proposition}
\label{direct:implications}
\begin{itemize}
\item[(i)] A Todd \cc\ space is \sTp.
\item[(ii)] An  Oxtoby \cc\ space is \sOp.
\item[(iii)] A S\'anchez-Okunev \cc\ space is \sSOp.
\item[(iv)] A Telg\'arsky \cc\ space is \sTep.
\end{itemize}
\end{proposition}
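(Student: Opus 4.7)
The plan is to reduce all four parts to a single argument by choosing, in each case, the \cc\ family guaranteed by the hypothesis and then taking the constant sequence obtained by setting every $\B_n$ equal to that family. Precisely, let $X$ be any of the four kinds of \cc\ space from Definition \ref{def:compact:properties}, and let $\B$ be the witnessing \cc\ pseudobase (or base, in case (iv)). Define $\B_n=\B$ for every $n\in\N$.

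The first step is to observe that $\{\B_n:n\in\N\}$ is a centered sequence: this is exactly Remark \ref{compact:are:strongly:complete}(i), whose hypothesis on $\B$ is satisfied by assumption. The second step is to note that each $\B_n$ is a pseudobase of $X$, which is immediate since $\B$ is either a pseudobase or a base (and bases are pseudobases by Definition \ref{def:pi-pseudobase}). Combining these two facts with Definition \ref{def:nested:and:complete:sequences}(ii) shows that $\{\B_n:n\in\N\}$ is a complete sequence in $X$; this is simply Remark \ref{compact:are:strongly:complete}(iii).

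Finally, since $\B_n=\B$ for all $n$, the completeness property is witnessed by a \emph{constant} sequence, which is what the adjective ``strongly'' requires (as explained after Definition \ref{four:properties}). Verifying the extra structural requirement in each of the four cases is then trivial and disposed of in a single sentence: in (i) no extra condition is required; in (ii) $\B$ consists of open sets, so each $\B_n$ does too, giving \sOp ness; in (iii) $\B$ consists of zero-sets, giving \sSOp ness; and in (iv) $\B$ is a base of $X$, so each $\B_n$ is a base, giving \sTep ness.

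There is essentially no obstacle in this argument; the whole content lies in Remark \ref{compact:are:strongly:complete}, and the role of the proposition is simply to record, as a named statement, the four parallel consequences. The only point worth being slightly careful about is the bookkeeping distinction between ``pseudobase'' and ``base'' in part (iv), which is handled by the trivial fact that every base is a pseudobase.
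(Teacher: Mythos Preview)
Your proof is correct and follows exactly the approach indicated in the paper, which states that the proposition is an immediate corollary of Remark~\ref{compact:are:strongly:complete}(i) and Definition~\ref{def:compact:properties}. You have simply written out in full the one-line argument the paper leaves implicit, correctly invoking Remark~\ref{compact:are:strongly:complete}(iii) (which packages~(i) together with the pseudobase condition) and noting that the constant sequence witnesses the ``strong'' versions.
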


In \cite{SO}, F. S\'anchez-Texis and O. Okunev essentially proved the following result (in our terminology): 
 
\begin{lemma}
\label{lemma:SOcc}
A $G_\delta$-dense subspace of a S\'anchez-Okunev \cc\ space is itself S\'anchez-Okunev \cc.
\end{lemma}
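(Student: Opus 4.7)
My plan is to lift the witness family from $Y$ to $X$ by restriction. Let $\mathcal{B}$ be a \cc\ pseudobase of zero-sets of $Y$ witnessing that $Y$ is S\'anchez-Okunev \cc, and let $X\subseteq Y$ be $G_\delta$-dense. I shall set
\[
\mathcal{A}=\{B\cap X:B\in\mathcal{B}\}
\]
and verify that $\mathcal{A}$ is a \cc\ pseudobase of zero-sets of $X$.

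The pivotal auxiliary observation is the following: for every zero-set $Z$ of $Y$, $Z\cap X$ is dense in $Z$, and hence $\Cl_Y(Z\cap X)=Z$. To see it, take a non-empty relatively open subset $V\cap Z$ of $Z$, with $V$ open in $Y$, pick $p\in V\cap Z$, and use the Tychonoff property of $Y$ to produce a zero-set $W$ of $Y$ with $p\in W\subseteq V$. Since the intersection of two zero-sets is a zero-set, $W\cap Z$ is a non-empty zero-set, hence a non-empty $G_\delta$-subset of $Y$; $G_\delta$-density gives $(W\cap Z)\cap X\neq\emptyset$, producing a point of $(V\cap Z)\cap X$. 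The same circle of ideas shows that each $B\cap X\in\mathcal{A}$ is a non-empty zero-set of $X$ (apply $G_\delta$-density to $B$ itself) with non-empty interior in $X$ (apply it to a zero-set sitting inside the non-empty open set $\Int_Y(B)$), and that $\mathcal{A}$ is a pseudobase: any non-empty open $U\subseteq X$ can be written as $V\cap X$ for some open $V\subseteq Y$, and the pseudobase property of $\mathcal{B}$ furnishes $B\in\mathcal{B}$ with $B\subseteq V$, so $B\cap X\subseteq U$.

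The heart of the argument is \ccness\ of $\mathcal{A}$. Suppose $\{B_n\cap X:n\in\N\}\subseteq\mathcal{A}$ is decreasing, with $B_n\in\mathcal{B}$. From $B_{n+1}\cap X\subseteq B_n\cap X\subseteq B_n$ and the fact that $B_n$ is closed in $Y$, we obtain $\Cl_Y(B_{n+1}\cap X)\subseteq B_n$; but the density statement of the previous paragraph forces $\Cl_Y(B_{n+1}\cap X)=B_{n+1}$, so $B_{n+1}\subseteq B_n$. Thus $\{B_n:n\in\N\}$ is itself a decreasing sequence in $\mathcal{B}$, and \ccness\ of $\mathcal{B}$ yields $\bigcap_{n\in\N}B_n\neq\emptyset$. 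Being a countable intersection of zero-sets of $Y$, this set is a non-empty $G_\delta$-subset of $Y$, so $G_\delta$-density produces a point of $X\cap\bigcap_{n\in\N}B_n=\bigcap_{n\in\N}(B_n\cap X)$, finishing the proof.

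The one subtle step — which I expect to be the main obstacle if one does not spot it — is the density statement in the second paragraph, because it is precisely what upgrades the a priori unrelated sequence $\{B_n\}$ to a decreasing sequence in $\mathcal{B}$, allowing \ccness\ of $\mathcal{B}$ to be applied. Without this observation the obvious direct attempts to force $\bigcap_{n\in\N}B_n\neq\emptyset$ flounder, since $G_\delta$-density alone does not let one choose the $B_n$'s to be decreasing in $Y$.
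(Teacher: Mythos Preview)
Your proof is correct and follows essentially the same route as the paper: restrict the witnessing pseudobase to the $G_\delta$-dense subspace, use the identity $\Cl_Y(Z\cap X)=Z$ for zero-sets $Z$ to lift a decreasing sequence in the trace family back to a decreasing sequence in the original family, and then invoke $G_\delta$-density once more on the intersection. The only difference is cosmetic: the paper outsources your ``pivotal auxiliary observation'' to \cite[Proposition~2.5]{SO}, whereas you prove it directly.
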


\proof
Let $Y$ be a $G_\delta $-dense subspace of a S\'anchez-Okunev \cc\ space $X$.
Let $\cal F$ be some \cc\/ pseudobase consisting of zero-sets in $X$.
Since $Y$ is dense in $X$, the family
$\cal G=\{F\cap Y:F\in \cal F\}$ is a pseudobase in $Y$. 
Clearly, $\cal G$ is a family of zero-sets in $Y$. 
Let 
$\{G_n:n\in\N\}\subseteq \cal G$ be a 
decreasing sequence.
For every $n\in N$, there is $F_n\in \cal F$ such that $F_n\cap Y=G_n$.

Let $n\in\N$. Since $Y$
is $G_\delta $-dense in $X$, \cite[Proposition 2.5]{SO} implies that
$F_{n+1}=\Cl_X(F_{n+1}\cap Y)=\Cl_X(G_{n+1})\subseteq\Cl_X(G_n)= \Cl_X(F_n\cap Y)= F_n$.

Hence, $\{F_n:n\in\N\}$ is a decreasing sequence of members of $\cal F$. 
Since $\cal F$ is \cc,
$G=\bigcap\{F_n:n\in\N\}$ is non-empty. 
 Since $G$ is a $G_\delta $-set in $X$ and $Y$ is $G_\delta$-dense in $X$,
we have $G\cap Y\not=\emptyset$, so
$\bigcap \{G_n:n\in\N\}=\bigcap \{F_n\cap Y:n\in\N\}=\bigcap \{F_n:n\in\N\}\cap Y=G\cap Y\not=\emptyset$.
\endproof

\begin{proposition}
\label{locally:compact:are:SOcc}
\label{weakly:psc:are:SO}
\begin{itemize}
\item[(i)]
A locally compact space is S\'anchez-Okunev \cc.
\item[(ii)]
A weakly pseudocompact space is S\'anchez-Okunev \cc.
\item[(iii)]
A locally pseudocompact space 
is 
S\'anchez-Okunev \cc.
\end{itemize}
\end{proposition}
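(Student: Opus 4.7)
For (i), I would exhibit a countably compact pseudobase of compact zero-sets. Let $\mathcal{F}$ be the family of all compact zero-sets of $X$ with non-empty interior. To show $\mathcal{F}$ is a pseudobase, fix a non-empty open $U\subseteq X$ and pick $x\in U$ having a compact neighborhood $K$; using the Tychonoff property, choose a continuous $f:X\to[0,1]$ with $f(x)=0$ and $f\equiv 1$ off $U\cap\Int_X(K)$. Then $Z=f^{\leftarrow}([0,1/2])$ is the zero-set of $\max\{f-1/2,0\}$, has non-empty interior (it contains $f^{\leftarrow}([0,1/2))\ni x$), and is a closed subset of the compact $K$, hence compact. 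The countably compactness of $\mathcal{F}$ is immediate, since any decreasing sequence in $\mathcal{F}$ is a decreasing sequence of non-empty closed subsets of the compact set $Z_1$. Item (ii) then follows at once from Lemma \ref{lemma:SOcc}: if $X$ is $G_\delta$-dense in a compact space $Y$, then (i) makes $Y$ S\'anchez-Okunev countably compact, and the lemma transfers the property to $X$.

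For (iii), I would mimic (i) with ``pseudocompact'' in place of ``compact''. Let $\mathcal{F}$ consist of all zero-sets $Z$ of $X$ with $\Int_X(Z)\neq\emptyset$ such that $Z$ is pseudocompact. Given a non-empty open $U\subseteq X$, pick an open $W\subseteq U$ whose closure $V=\Cl_X(W)$ is pseudocompact (such $W$ form a base of $X$ by local pseudocompactness, as observed in the proof of Proposition \ref{loc:psc:is:Telgarsky}), and build a zero-set $Z\subseteq W$ with non-empty interior just as in (i). Since $Z\subseteq V$, restricting the defining continuous function of $Z$ to $V$ exhibits $Z$ as a zero-set of the pseudocompact space $V$; invoking the classical fact that a zero-set of a pseudocompact space is pseudocompact, we conclude $Z\in\mathcal{F}$. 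For the countably compact property, let $\{Z_n\}$ be a decreasing sequence in $\mathcal{F}$: each $Z_n$ is a zero-set of the pseudocompact space $Z_1$ by restriction, and since the non-empty zero-sets of any pseudocompact space form a countably compact family (as noted in Section \ref{Compactness-like properties}), $\bigcap_n Z_n\neq\emptyset$.

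The main obstacle is the subtlety in (iii): closed subsets of pseudocompact spaces need not be pseudocompact, so it is essential that the pseudobase elements be zero-sets (rather than arbitrary closed sets with non-empty interior) in order to appeal to ``a zero-set of a pseudocompact space is pseudocompact''. This is precisely why the approach used in Proposition \ref{loc:psc:is:Telgarsky} to obtain strong Telg\'arsky completeness from arbitrary open sets with pseudocompact closure does not transfer verbatim, and why the construction above is carried out with the Tychonoff separation step baked in from the start.
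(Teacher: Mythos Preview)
Your arguments for (i) and (ii) are correct and essentially identical to the paper's.

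Your proof of (iii), however, rests on a false ``classical fact'': it is \emph{not} true that a zero-set of a pseudocompact space is pseudocompact. A standard counterexample is the Mr\'owka space $\Psi=\omega\cup\mathcal{A}$ built from a maximal almost disjoint family $\mathcal{A}$ on $\omega$. The function $f:\Psi\to[0,1]$ given by $f(n)=1/(n+1)$ on $\omega$ and $f\equiv 0$ on $\mathcal{A}$ is continuous, so $\mathcal{A}=f^{\leftarrow}(\{0\})$ is a zero-set of the pseudocompact space $\Psi$; yet $\mathcal{A}$ is an uncountable discrete space, hence not pseudocompact. (Adding finitely many isolated points of $\omega$ to $\mathcal{A}$ yields a zero-set with non-empty interior that is still not pseudocompact, so the non-empty-interior hypothesis does not rescue the claim.) Consequently you have not shown that your family $\mathcal{F}$ is a pseudobase: the zero-set $Z$ you build inside the pseudocompact closure $V$ need not itself be pseudocompact, so you cannot conclude $Z\in\mathcal{F}$. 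You correctly identified that closed subsets of pseudocompact spaces can fail to be pseudocompact, but restricting to zero-sets does not cure this.

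The paper avoids this difficulty entirely by a dichotomy argument. If $X$ is Lindel\"of, then local pseudocompactness upgrades to local compactness (a pseudocompact Lindel\"of space is compact), and (i) applies. If $X$ is not Lindel\"of, the paper invokes an external result (\cite[Corollary~2.5]{DT}) that a non-Lindel\"of locally pseudocompact space is weakly pseudocompact, and (ii) applies. Thus (iii) is deduced from (i) and (ii) rather than proved by a direct construction of a pseudobase.
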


\begin{proof}
(i) 
If $X$ is a locally compact space, then the family $\cal F=\{Z\subseteq X: Z$ 
is a compact zero-set in $X$ such that $\inte_XZ\not=\emptyset\}$
is a \cc\/ pseudobase in $X$. The second statement follows from 
Proposition \ref{direct:implications}(iii).

(ii)
Let $X$ be a weakly pseudocompact space. By Definition \ref{def:weakly:psc}(ii),
$X$ is $G_\delta $-dense in some compact space $Y$.
The latter is S\'anchez-Okunev \cc\ by 
item (i),
and the former is S\'anchez-Okunev \cc\ by Lemma \ref{lemma:SOcc}.

(iii)
Let $X$ be a locally pseudocompact space.
If $X$ is Lindel\"of, then $X$ is locally compact, so 
$X$ is S\'anchez-Okunev \cc\ by item (i).
If $X$ is not Lindel\"of, $X$ is weakly pseudocompact by \cite[Corollary 2.5]{DT}, so $X$ 
is S\'anchez-Okunev \cc\ by item (ii).
\end{proof}

\begin{example}
\label{R:example}
The real line $\bb R$ is S\'anchez-Okunev \cc\ by Proposition \ref{locally:compact:are:SOcc}
 but 
$\R$ is not weakly pseudocompact; see \cite[Corollary 3.8]{GG}. 
\end{example}

\begin{proposition}
\label{zero:dimensional:psc}
A locally pseudocompact zero-dimensional space 
is 
Telg\'arsky \cc.
\end{proposition}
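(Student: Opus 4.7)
The plan is to exhibit an explicit countably compact base, following the idea in Proposition \ref{loc:psc:is:Telgarsky} but restricted to clopen sets so that zero-set machinery becomes available. Concretely, I set
$$
\mathcal{B}=\{U\subseteq X: U\text{ is a non-empty clopen subset of } X \text{ which is pseudocompact}\}
$$
and claim that $\mathcal{B}$ is a base of $X$ which is \cc, from which Definition \ref{def:compact:properties}(iv) yields the conclusion.

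First I would verify that $\mathcal{B}$ is a base. Given $x\in X$ and an open set $V\ni x$, use local pseudocompactness to pick an open $W\ni x$ with $\Cl_X(W)$ pseudocompact, and then use zero-dimensionality to pick a clopen $U$ with $x\in U\subseteq V\cap W$. Since $U$ is open in $X$ and contained in $\Cl_X(W)$, it is open in $\Cl_X(W)$; since it is closed in $X$, it is closed in $\Cl_X(W)$; hence $U$ is clopen in the pseudocompact space $\Cl_X(W)$. The key observation is then that \emph{clopen subsets of pseudocompact spaces are pseudocompact}: any continuous $f\colon U\to\R$ extends continuously to $\Cl_X(W)$ by setting $f\equiv 0$ on $\Cl_X(W)\setminus U$ (continuity of the extension follows because $U$ and its complement are both clopen), and then $f$ is bounded by pseudocompactness of $\Cl_X(W)$. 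Thus $U\in\mathcal{B}$.

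Next I would check that $\mathcal{B}$ is \cc. Let $\{B_n:n\in\N\}\subseteq\mathcal{B}$ be decreasing. Each $B_n$, being clopen in $X$, is clopen in $B_1$, and in particular is a zero-set of $B_1$ (the characteristic function of $B_n$ is continuous on $B_1$). Hence $\{B_n:n\in\N\}$ is a decreasing sequence of non-empty zero-sets in the pseudocompact space $B_1$, so by the standard zero-set characterization of pseudocompactness (\cite[5H(4)]{GJ}, as already used in the remark preceding Definition \ref{def:compact:properties}) we get $\bigcap_{n\in\N}B_n\neq\emptyset$.

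The only subtle point is the step showing that the chosen clopen neighborhood $U$ is itself pseudocompact; closed subsets of pseudocompact spaces need not be pseudocompact in general, so one really does need to exploit that $U$ is clopen to obtain the extension-by-zero. Everything else is routine: zero-dimensionality supplies arbitrarily small clopen neighborhoods, and the zero-set criterion for pseudocompactness converts decreasing sequences of non-empty clopen subsets of $B_1$ into a non-empty intersection.
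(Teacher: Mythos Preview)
Your proof is correct and follows essentially the same approach as the paper: both use the identical base $\mathcal{B}$ of pseudocompact clopen sets, and both reduce the countable compactness of $\mathcal{B}$ to a standard characterization of pseudocompactness applied inside $B_1$. The only cosmetic difference is that the paper invokes the open-set/closure characterization (a decreasing sequence of non-empty open sets in a pseudocompact space has intersecting closures, and here $\Cl_X(B_n)=B_n$), whereas you invoke the zero-set characterization; your write-up also spells out why $\mathcal{B}$ is a base, which the paper leaves implicit.
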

\proof
Let $X$ be a locally pseudocompact zero-dimensional space. Then the family
$\cal B=\{U\subseteq X:U$ is a non-empty  clopen subset of $X$ and $U$ is pseudocompact$\}$ is a base of $X$.
Let
$\{U_n:n\in \N\}\subseteq {\cal B}$ 
be
a decreasing sequence.
Observe that this sequence is contained in the pseudocompact space $U_1$. 
Therefore, $\bigcap \{\Cl_X(U_n):n\in \N \}\not=\emptyset$.
Since each $U_n$ is clopen in $X$, this gives
$\bigcap \{U_n:n\in \N \}\not=\emptyset$ as well. 
\endproof

We finish this section with the following diagram summarizing the relations between the notions introduced in the first three sections.

\begin{center} \medskip\hspace{1em}\xymatrix{
\text{Locally pseudocompact}\ar@/_5pc/[dd]\ar[rrrd]&\text{Pseudocompact}\ar[rr]\ar[l]& &\text{Weakly pseudocompact}\ar[d]\\
\text{Telg\'arsky c.compact}\ar[r]\ar[d]&\text{Oxtoby c.compact} \ar[r]\ar[d]&\text{Todd c.compact}\ar[d] &\text{S\'anchez-Okunev c.compact}\ar[d]\ar[l]\\
\text{Strongly Telg\'arsky}\ar[r]\ar[d]&\text{Strongly Oxtoby} \ar[r]\ar[d]&\text{Strongly Todd}\ar[d] &\text{Strongly S\'anchez-Okunev}\ar[d]\ar[l]\\
\text{Telg\'arsky}\ar[r]&\text{Oxtoby}\ar[r] &\text{Todd}\ar[d] &\text{S\'anchez-Okunev}\ar[l]\\
& &\text{Baire} &\\
}
\end{center}
\begin{center}
Diagram 2.
\end{center}

\section{Main results}\label{results}

Our first theorem 
demonstrates
that
most completeness properties (except possibly the Telg\'arsky series of completeness properties) 
coincide for metric spaces:

\begin{theorem}
\label{coincidence:for:metric:spaces}
For a metric space $X$, the following conditions are equivalent:
\begin{itemize}
\item[(i)] $X$ is S\'anchez-Okunev \cc; 
\item[(ii)] $X$ is Oxtoby \cc; 
\item[(iii)] $X$ is Todd \cc;
\item[(iv)] $X$ is \sSOp; 
\item[(v)] $X$ is \sOp; 
\item[(vi)] $X$ is \sTp;
\item[(vii)] $X$ is \SOp; 
\item[(viii)] $X$ is \Op; 
\item[(ix)] $X$ is \Tp;
\item[(x)] $X$ contains a dense zero-dimensional completely metrizable subspace $Z$. 
\end{itemize}
\end{theorem}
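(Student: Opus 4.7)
\emph{Proof plan.}
Most implications among (i)--(ix) are immediate: Proposition~\ref{direct:implications} gives (i)$\Rightarrow$(iv), (ii)$\Rightarrow$(v) and (iii)$\Rightarrow$(vi); taking the associated constant sequence yields (iv)$\Rightarrow$(vii), (v)$\Rightarrow$(viii) and (vi)$\Rightarrow$(ix); Diagram~1 gives (vii)$\Rightarrow$(ix) and (viii)$\Rightarrow$(ix); and forgetting the restriction on the type of sets in the pseudobase yields (i),(ii)$\Rightarrow$(iii). It therefore suffices to establish the three non-trivial implications (ix)$\Rightarrow$(x), (x)$\Rightarrow$(i) and (x)$\Rightarrow$(ii).

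For (ix)$\Rightarrow$(x), fix a complete sequence $\{\mathcal{B}_n:n\in\N\}$ for $X$ and a compatible metric $d$. The plan is to construct by induction on $n$ a family $\{(U_s,F_s):s\in T\}$ indexed by a tree $T$ of finite sequences, such that $F_s\in\mathcal{B}_{|s|}$, $U_s$ is a non-empty open subset of $X$ with $U_s\subseteq\Int_X(F_s)$ and $\mathrm{diam}_d(U_s)<1/|s|$, siblings $U_{s\frown i}$ and $U_{s\frown j}$ are disjoint for $i\ne j$, $\Cl_X(F_{s\frown i})\subseteq U_s$, and $\bigcup_{|s|=n}U_s$ is dense in $X$ for every $n$. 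The inductive step uses the pseudobase property of $\mathcal{B}_{n+1}$ together with metric regularity to produce, inside any preassigned non-empty open subset of $U_s$, a pair $(U,F)$ satisfying the above conditions; Zorn's lemma then selects a maximal pairwise disjoint family of such pairs, whose maximality forces $\bigcup U$ to be dense in $U_s$. Setting $Z=\bigcap_n\bigcup_{|s|=n}U_s$, each infinite branch of $T$ yields a nested sequence $\{F_{s_n}\}$ in $\{\mathcal{B}_n\}$; Todd centeredness gives $\bigcap_n F_{s_n}\ne\emptyset$, and since $\mathrm{diam}_d(U_{s_n})\to 0$ this intersection is a single point, lying in every $U_{s_n}$ and hence in $Z$. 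Density of $Z$ in $X$ follows from the level-wise density of the $U_s$'s together with metric regularity (shrink the given open set so that a branch can be trapped inside it). The family $\{Z\cap U_s:s\in T\}$ is a clopen base of $Z$ (open by disjointness of siblings and closed as the complement in $Z$ of the union of the other siblings at its level), so $Z$ is zero-dimensional; finally, the map sending each $z\in Z$ to its unique branch is a homeomorphism of $Z$ onto the branch space of $T$, which is completely metrizable as a closed subspace of a product of discrete sibling sets.

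For (x)$\Rightarrow$(i) and (x)$\Rightarrow$(ii), let $\rho$ be a compatible complete metric on the subspace $Z$ guaranteed by (x). I would first construct a tree $T$ of clopen subsets of $Z$ such that the members at each level $n$ partition $Z$ and have both $\rho$- and $d$-diameter less than $1/n$; this is possible by zero-dimensionality of $Z$ and iterated refinement of a clopen base. For $W\in T$ set $V_W=X\setminus\Cl_X(Z\setminus W)$; then $V_W$ is open in $X$, $V_W\cap Z=W$, and denseness of $Z$ forces $V_W\subseteq\Cl_X(W)$, so $\mathrm{diam}_d(V_W)\to 0$ as the level of $W$ grows. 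The family $\{V_W:W\in T\}$ is a pseudobase of open sets in $X$, and it is countably compact: a decreasing sequence $V_{W_n}$ forces the clopen sets $W_n=V_{W_n}\cap Z$ to form a descending chain in $T$ which, by the partition structure at each level, either stabilizes or constitutes a branch with $\rho$-diameter tending to $0$; in either case $\bigcap_n W_n\ne\emptyset$ by $\rho$-completeness, and any such point lies in every $V_{W_n}$ because $W_n\subseteq V_{W_n}$. This proves (ii); replacing $V_W$ by $\Cl_X(W)$ yields, by a parallel argument, a countably compact pseudobase of closed sets, which are zero-sets because $X$ is metric, proving (i).

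The main obstacle is the inductive tree construction underlying (ix)$\Rightarrow$(x): at each step one must simultaneously arrange membership of $F_s$ in $\mathcal{B}_{|s|}$, pairwise disjointness of the sibling open sets, the nested closure condition on the $F$'s, a vanishing diameter bound, and the density of $\bigcup U_s$ at each level. This requires a careful combination of the pseudobase property of each $\mathcal{B}_n$, Zorn's lemma for maximality, and the regularity of the metric topology. The remaining steps, while requiring careful bookkeeping of the tree, are routine.
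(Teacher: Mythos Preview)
Your proposal is correct and its core architecture matches the paper's: the tree construction for (ix)$\Rightarrow$(x) is exactly the content of Theorem~\ref{T:gives:dense:complete}, where the paper's families $\mathcal{A}_n\subseteq\mathcal{B}_n$ built via the inscription Lemma~\ref{inscription:lemma} play the role of your $\{F_s:|s|=n\}$, and the extension of a countably compact clopen family on $Z$ to pseudobases of $X$ is Lemma~\ref{dense_strongly_sanchez_okunev}.

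Two genuine differences are worth flagging. First, you establish complete metrizability of $Z$ by exhibiting a homeomorphism with the branch space of the tree, whereas the paper shows $Z$ is a $G_\delta$ in a compactification of $X$; your route avoids the ambient compactification but the paper's gives \v{C}ech-completeness directly and, more importantly, simultaneously proves that the clopen base $\mathcal{C}$ of $Z$ is countably compact (Claims~\ref{three:inclusions}--\ref{cl:3}), which the paper then feeds into the proof of (x)$\Rightarrow$(i),(ii). Second, and relatedly, for (x)$\Rightarrow$(i),(ii) the paper's written argument actually starts from this stronger output of Theorem~\ref{T:gives:dense:complete} (a dense $Z$ already equipped with a countably compact clopen base), with the cited Aarts--Lutzer equivalence (viii)$\Leftrightarrow$(x) closing the logical cycle; you instead rebuild the countably compact clopen tree on $Z$ from scratch using only zero-dimensionality and a compatible complete metric, which is more self-contained. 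For that step to go through you need that a zero-dimensional metrizable space admits, for every $\varepsilon>0$, a clopen partition into sets of diameter ${<}\varepsilon$; this is the standard fact that $\mathrm{ind}\,Z=0$ implies $\dim Z=0$ for metrizable $Z$, so every open cover has a pairwise disjoint open (hence clopen) refinement.
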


The equivalence of items (viii) and (x) in this theorem is due to  
J.M. Aarts and D.J. Lutzer
\cite[Corollary 2.4]{AL}. The implication (ix)$\Rightarrow $(x) in this theorem was claimed without a proof in 
\cite[Proposition 6.4]{DRT}.
In Theorem \ref{T:gives:dense:complete} below we show that a subspace $Z$ from item (x) of the above theorem can also be chosen to have a countably compact base consisting of clopen subsets of $Z$, so that this $Z$ is simultaneously Telg\'arsky \cc\ and S\'anchez-Okunev \cc.

For metric groups, even more can be said:

\begin{theorem}
\label{metric:Todd:are:Cech-complete}
For a metric group $X$, the following two properties can be added 
to the list of equivalent conditions in Theorem 
\ref{coincidence:for:metric:spaces}.
\begin{itemize}
\item[(xi)] $X$ is \v{C}ech-complete;
\item[(xii)] $X$ is \Telgarsky.
\end{itemize}
\end{theorem}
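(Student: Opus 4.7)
The plan is to extend the chain of equivalences from Theorem~\ref{coincidence:for:metric:spaces} by appending the two new items. Half of the work is already in place: Lemma~\ref{chec-Te} gives (xi)$\Rightarrow$(xii) (and also (xi)$\Rightarrow$(vii)), while the diagram in Section~\ref{section:1} yields (xii)$\Rightarrow$(viii). Hence the only genuinely new implication to verify is that some condition from (i)--(x) returns us to (xi); it is most convenient to prove (x)$\Rightarrow$(xi). Everything thus reduces to the following lemma: \emph{a metric topological group $X$ that contains a dense completely metrizable subspace $Z$ is \v{C}ech-complete.}

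My strategy for this lemma is to prove considerably more---that $X$ already coincides with its Raikov completion $\varrho X$. Since $X$ is metrizable, $\varrho X$ is a metrizable topological group that is complete in the two-sided uniformity, hence completely metrizable and in particular a Baire space. The subspace $Z$ is completely metrizable and dense in the metrizable space $\varrho X$, so by the classical theorem on completely metrizable subspaces of metrizable spaces, $Z$ is a $G_\delta $ in $\varrho X$. Being a dense $G_\delta $ in a Baire space, $Z$ is comeager in $\varrho X$; in particular $Z$ is non-meager and has the Baire property in $\varrho X$.

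The decisive step is then a single application of Pettis's theorem inside the Baire topological group $\varrho X$: the product $Z\cdot Z^{-1}$ contains an open neighborhood $W$ of the identity of $\varrho X$. Because $Z\subseteq X$ and $X$ is a subgroup of $\varrho X$, we obtain $W\subseteq Z\cdot Z^{-1}\subseteq X$, so $X=\bigcup_{x\in X}xW$ is open in $\varrho X$. An open subgroup of a topological group is always closed, so $X$ is clopen in $\varrho X$; being also dense, $X$ must equal $\varrho X$. Hence $X=\varrho X$ is completely metrizable and therefore \v{C}ech-complete, closing the loop (xi)$\Rightarrow$(xii)$\Rightarrow$(viii)$\Rightarrow$(x)$\Rightarrow$(xi).

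The only obstacle I anticipate is bookkeeping rather than real difficulty: one must justify (or cite) the standard fact that the Raikov completion of a metrizable topological group is itself a completely metrizable group, so that Baire category, the theorem on completely metrizable subspaces, and Pettis's theorem are all simultaneously available in $\varrho X$. Once this preliminary is in place, the conceptual content of the proof is a one-line appeal to Pettis's theorem, exploiting the only group-theoretic feature of $Z$ that the argument uses: that $Z\cdot Z^{-1}$ is forced into the subgroup $X$ of $\varrho X$.
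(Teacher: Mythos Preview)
Your proof is correct and follows essentially the same route as the paper: both reduce (x)$\Rightarrow$(xi) to showing $X=\varrho X$ by a Baire-category argument applied to the dense completely metrizable (hence dense $G_\delta$) subset $Z$ of the completely metrizable group $\varrho X$. The only cosmetic difference is packaging---you cite Pettis's theorem to get an identity-neighbourhood inside $ZZ^{-1}$ and then argue that $X$ is open (hence closed) in $\varrho X$, whereas the paper proves the elementary translation lemma behind Pettis directly (its Lemma preceding Corollary~\ref{dense_cech}) to obtain $\varrho X=ZZ^{-1}\subseteq X$ in one stroke.
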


Proofs of Theorems \ref{coincidence:for:metric:spaces} and 
\ref{metric:Todd:are:Cech-complete} are postponed until Section \ref{section:metric}.

\begin{definition}
\label{def:polish:factorizable}
A topological group $G$ will be called {\em Polish factorizable\/}
if 
for every continuous homomorphism $f:G\to  K$ to a separable metric group $K$, there exist a \v{C}ech-complete separable metric (aka Polish) group $H$ and continuous homomorphisms $\pi :G\to H$ and $h:H\to K$ such 
that $f=h\circ \pi$ and $H=\pi(G)$.
\end{definition}

Recall that a topological group $G$ is {\em precompact} if for every neighborhood $V$ of the identity $e$, there is a 
finite set $A\subseteq G$ such that $G=AV$.   
It is well known that a topological group $G$ is precompact if and only if it is topologically isomorphic to 
a subgroup of some compact group, or equivalently, if the Raikov completion $\varrho G$ of $G$ is compact. 

Most
completeness and 
compactness properties considered in this paper 
coincide for precompact groups:

\begin{theorem}
\label{equivalent:conditions:for:precompact}
For a precompact topological group $G$,
the following conditions are equivalent:
\begin{itemize}
\item[(i)] $G$ is pseudocompact;
\item[(ii)] $G$ is weakly pseudocompact;
\item[(iii)] $G$ is \SOp;
\item[(iv)] $G$ is \Op;
\item[(v)] $G$ is \Telgarsky; 
\item[(vi)] $G$ is \sSOp;
\item[(vii)] $G$ is \sOp;
\item[(viii)] $G$ is \sTep; 
\item[(ix)] $G$ is S\'anchez-Okunev \cc;
\item[(x)] $G$ is Polish factorizable.
\end{itemize}
\end{theorem}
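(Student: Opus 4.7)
The plan is to establish the equivalences via a single circle of implications, leaning on two main ingredients: Corollary~\ref{polish_fact} (which channels every completeness property in the list into Polish factorizability~(x)) and a direct argument that Polish factorizability together with precompactness forces $G_\delta$-density of $G$ in its Raikov completion $\varrho G$, whence pseudocompactness.

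First I would dispatch the routine implications. (i)$\Rightarrow$(ii) is Hewitt's theorem, (ii)$\Rightarrow$(ix) is Proposition~\ref{weakly:psc:are:SO}(ii), and (ix)$\Rightarrow$(vi) is Proposition~\ref{direct:implications}(iii). The implications (vi)$\Rightarrow$(iii), (vii)$\Rightarrow$(iv), and (viii)$\Rightarrow$(v) amount to dropping the word ``strongly'', and (v)$\Rightarrow$(iv) is read off Diagram~1. Each of (iii)--(viii) is \Tp\ (again by Diagram~1), and Corollary~\ref{polish_fact} then delivers (x). Thus every one of (i)--(ix) reaches (x) through the chain (i)$\Rightarrow$(ii)$\Rightarrow$(ix)$\Rightarrow$(vi)$\Rightarrow$(iii)$\Rightarrow$(x), with (iv),(v),(vii),(viii) flowing into (iii) along the same lines.

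The heart of the proof is the implication (x)$\Rightarrow$(i). Since $G$ is precompact, $\varrho G$ is compact, and by the classical Comfort--Ross equivalence for precompact groups it suffices to show that $G$ is $G_\delta$-dense in $\varrho G$. Fix a non-empty $G_\delta$-subset $D$ of $\varrho G$. Standard compact-group structure theory provides a closed $G_\delta$ normal subgroup $N$ of $\varrho G$ and an element $a\in\varrho G$ with $aN\subseteq D$ such that $K=\varrho G/N$ is compact and metrizable; let $q\colon\varrho G\to K$ be the quotient. It suffices to show $q(G)=K$, since then some $b\in G$ satisfies $q(b)=q(a)$, so that $b\in aN\subseteq D$. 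Applying Polish factorizability of $G$ to the continuous homomorphism $q|_G\colon G\to K$ produces a Polish group $H$ together with continuous homomorphisms $\pi\colon G\to H$ and $h\colon H\to K$ satisfying $q|_G=h\circ\pi$ and $\pi(G)=H$. Now $H=\pi(G)$ is precompact as a continuous image of $G$, and being Polish it is Raikov-complete; a precompact Raikov-complete group is compact, so $H$ is compact metric. Hence $\pi$ extends to a continuous homomorphism $\tilde\pi\colon\varrho G\to H$. Since $q$ and $h\circ\tilde\pi$ are continuous and agree on the dense subgroup $G$, they agree on all of $\varrho G$, so $K=q(\varrho G)=h(\tilde\pi(\varrho G))=h(H)=h(\pi(G))=q(G)$, as required.

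The main obstacle is this final step (x)$\Rightarrow$(i), specifically the standard but nontrivial reduction that every non-empty $G_\delta$ in the compact group $\varrho G$ contains a coset of a closed $G_\delta$ normal subgroup $N$ with $\varrho G/N$ metrizable. I would either cite this directly from the compact-group literature or supply a short verification using the facts that a closed subgroup of a compact group has metrizable quotient if and only if it is a $G_\delta$, and that every $G_\delta$-neighborhood of the identity in a compact group contains such a subgroup.
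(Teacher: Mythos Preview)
Your proof has a genuine gap: the ``single circle'' is not closed. You show that each of (i)--(ix) implies (x), and then (x)$\Rightarrow$(i), but from (i) you only reach (ii), (ix), (vi), (iii). Nothing in your argument produces (iv), (v), (vii), or (viii) from any of these. The paper plugs this with Proposition~\ref{loc:psc:is:Telgarsky}: pseudocompact (hence locally pseudocompact) implies strongly \Telgarsky\ (viii), and then (viii)$\Rightarrow$(vii), (v), (iv) follow from Diagram~2. This is an easy fix but must be stated.

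There is also a minor inaccuracy in how you invoke Corollary~\ref{polish_fact}: that corollary does not take ``\Tp'' as input. It requires either \SOp\ (together with $\R$-factorizability, which precompactness guarantees) or \Op\ (together with c.c.c., which precompactness also guarantees). Your phrase ``(iv),(v),(vii),(viii) flowing into (iii)'' is wrong --- those four reach (iv), not (iii), and then (iv)$\Rightarrow$(x) via case~(b) of Corollary~\ref{polish_fact}.

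Your argument for (x)$\Rightarrow$(i) is correct and takes a genuinely different route from the paper's. The paper (Proposition~\ref{SOp:precompact:are:psc}) shows directly that every continuous real-valued function on $G$ is bounded: it first uses $\R$-factorizability to factor such a function through a separable metric group, then Polish factorizability to insert a Polish (hence, by precompactness, compact) intermediate group, over which the real-valued function has bounded image. Your approach instead proves $G_\delta$-density of $G$ in $\varrho G$ via the structure of $G_\delta$-sets in compact groups and then appeals to Comfort--Ross. Both are clean; the paper's argument is slightly more self-contained because it avoids the structure lemma about cosets of closed $G_\delta$ normal subgroups, while yours makes the geometric content ($G_\delta$-density in the completion) explicit.
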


The equivalence of  items (i) and (ii) in the previous theorem
confirms the validity of Conjecture \ref{conjecture:Tkachenko}.

An uncountable discrete space is weakly pseudocompact, as it is $G_\delta$-dense in its one-point compactification. Therefore, items (i) and (ii) in Theorem \ref{equivalent:conditions:for:precompact} are not equivalent without the assumption that the group $G$ is precompact.
Whether some of the equivalences from Theorem \ref{equivalent:conditions:for:precompact} remain valid for some  wider classes of topological groups remains unclear; see Question
\ref{omega-bounded:question}.

Theorem \ref{equivalent:conditions:for:precompact}
is proved in Section \ref{sec:polish;factorizable}.

\begin{corollary}
\label{zero:dimensional:precompact}
For a zero-dimensional precompact group $G$, the following two items can be added to the list of equivalent conditions of
Theorem \ref{equivalent:conditions:for:precompact}:
\begin{itemize}
\item[(i)] $G$ is Telg\'arsky \cc;
\item[(ii)] $G$ is Oxtoby \cc.
\end{itemize}
\end{corollary}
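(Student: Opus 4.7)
The plan is to close a short cycle connecting the two new items with the list already known to be equivalent. Specifically, I would show
\[
\text{Telg\'arsky \cc} \;\Longrightarrow\; \text{Oxtoby \cc} \;\Longrightarrow\; \text{\Op} \;\Longrightarrow\; \text{pseudocompact} \;\Longrightarrow\; \text{Telg\'arsky \cc},
\]
where the third implication is the content of Theorem \ref{equivalent:conditions:for:precompact} applied to the precompact group $G$, and only the last step uses zero-dimensionality in an essential way.

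The first implication is immediate from Definition \ref{def:compact:properties}: a \cc\ base is in particular a \cc\ pseudobase consisting of open sets. The second implication is a one-line combination of Proposition \ref{direct:implications}(ii) (which upgrades Oxtoby \cc{} to strongly \Op) with the trivial observation that a strongly \Op\ space is \Op. For the third implication, Theorem \ref{equivalent:conditions:for:precompact} gives the equivalence of \Op ness and pseudocompactness for any precompact group, so no extra work is required.

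The final implication is where zero-dimensionality enters: a pseudocompact space is automatically locally pseudocompact, so if $G$ is in addition zero-dimensional, Proposition \ref{zero:dimensional:psc} applies directly and produces a \cc\ base of clopen pseudocompact sets, witnessing that $G$ is Telg\'arsky \cc. I do not anticipate a genuine obstacle, since each arrow of the cycle is either an immediate definitional observation or a direct citation of a result already established in the paper; the only point that deserves care is to verify that the base $\{U \subseteq G : U \text{ is clopen and pseudocompact}\}$ produced in Proposition \ref{zero:dimensional:psc} is indeed available in our setting, which is clear because $G$ is zero-dimensional and every clopen subset of a pseudocompact space is pseudocompact.
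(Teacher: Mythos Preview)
Your proposal is correct and follows precisely the intended argument: the paper states this corollary without proof because the cycle you describe is immediate from Diagram~2 (for the first two arrows), Theorem~\ref{equivalent:conditions:for:precompact} (for the third), and Proposition~\ref{zero:dimensional:psc} (for the closing arrow, which is the only place zero-dimensionality is used).
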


It is unclear if the assumption of zero-dimensionality can be dropped in this corollary; see Question \ref
{que:zero:dimensional:psc}.

\begin{theorem}
\label{equivalence:for:dense:subgroups:without:cech}
Let $G$ be a dense subgroup in the product $H=\prod_{i\in I} H_i $ of separable metric groups $H_i$. Then the 
following conditions are equivalent:
\begin{itemize}
\item[(i)] $G$ is \SOp;
\item[(ii)] $G$ is \Op; 
\item[(iii)] $G$ is \Telgarsky;
\item[(iv)] $G$ is \sSOp;
\item[(v)] $G$ is \sOp; 
\item[(vi)] $G$ is S\'anchez-Okunev \cc;
\item[(vii)] $G$ is Oxtoby \cc; 
\item[(viii)] $G$ is Polish factorizable;
\item[(ix)] $G$ is $G_\delta$-dense in $H$ and all $H_i$ are \v{C}ech-complete.
\end{itemize}
\end{theorem}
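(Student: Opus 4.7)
The strategy is to form a cycle through condition~(ix): show that (ix) implies the three strongest items on the list (namely (iii), (vi), (vii)); that the weakest items (i) and (ii) each imply (viii); and that (viii) in turn implies (ix). The remaining implications among (i)--(vii) are automatic from Proposition~\ref{direct:implications} and Diagram~2: one has (vii)$\Rightarrow$(v)$\Rightarrow$(ii), (iii)$\Rightarrow$(ii), and (vi)$\Rightarrow$(iv)$\Rightarrow$(i).

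For (ix)$\Rightarrow$(vi),(vii),(iii), I exploit that each $H_i$ is Polish. By Theorem~\ref{coincidence:for:metric:spaces} each $H_i$ admits both an Oxtoby countably compact pseudobase $\mathcal{F}_i$ of open sets and an S\'anchez-Okunev countably compact pseudobase $\mathcal{Z}_i$ of zero-sets, and by Theorem~\ref{metric:Todd:are:Cech-complete} each $H_i$ additionally admits a complete sequence of bases $\{\mathcal{B}_n^i\}_{n\in\N}$ (since a \v{C}ech-complete metric group is \Telgarsky-complete). I form the corresponding ``basic-box'' families in $H=\prod_{i\in I} H_i$: for each finite $F\subseteq I$, sets of the form $\prod_{i\in F}U_i\times\prod_{i\notin F}H_i$ with each $U_i$ drawn from $\mathcal{F}_i$, $\mathcal{Z}_i$, or $\mathcal{B}_n^i$, respectively. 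Given a decreasing (or nested) sequence of such boxes, one may assume after enlarging supports that the supports form an increasing chain in $I$ whose union $F$ is countable; coordinatewise countable compactness (or centeredness) in each $H_i$ then forces the coordinatewise intersections to be nonempty, whence so is the intersection in $H$. Passing to $G$: the restrictions constitute pseudobases (resp.\ bases) of $G$ by denseness, and the intersection in $G$ equals the $H$-intersection met with $G$, which is nonempty by $G_\delta$-density of $G$ in $H$ (Lemma~\ref{lemma:SOcc} handles the zero-set case directly). This yields (vii), (vi), and (iii) in turn.

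For (i),(ii)$\Rightarrow$(viii), I appeal to Corollary~\ref{polish_fact}, the main result of Section~\ref{sec:polish;factorizable}, which upgrades the weakest completeness conditions on the list to Polish factorizability for topological groups via the factorization machinery of Section~\ref{factorization:section}. For (viii)$\Rightarrow$(ix), fix a countable $J\subseteq I$; the projection $\sigma_J\colon G\to\prod_{i\in J}H_i$ is a continuous homomorphism to a separable metric group, hence by Polish factorizability factors as $h_J\circ\rho_J$ with $\rho_J\colon G\to H'_J$ a surjective continuous homomorphism onto a Polish group $H'_J$ and $h_J\colon H'_J\to\prod_{i\in J}H_i$ continuous with $h_J(H'_J)=\sigma_J(G)$ dense in $\prod_{i\in J}H_i$. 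The main obstacle, and the heart of this direction, is to upgrade the dense continuous image $h_J(H'_J)$ to all of $\prod_{i\in J}H_i$: once $h_J$ is surjective, $\prod_{i\in J}H_i$ is forced to be Polish (via the quotient $H'_J/\ker h_J$ together with the open mapping theorem for Polish groups), each $H_i$ is \v{C}ech-complete, and surjectivity of $\sigma_J$ for every countable $J$ is exactly $G_\delta$-density of $G$ in $H$. This upgrade must use Polish factorizability applied uniformly across \emph{all} continuous homomorphisms out of $G$, not merely coordinate projections, in order to extract the quotient-like structure that ultimately identifies $\prod_{i\in J}H_i$ with $H'_J/\ker h_J$ topologically.
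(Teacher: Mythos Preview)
Your overall architecture---cycling through (ix), and using Diagram~2 for the easy implications---matches the paper's. The argument for (ix)$\Rightarrow$(iii),(vi),(vii) via basic boxes is essentially the content of Lemma~\ref{product_sequences}, Lemma~\ref{product_sequences2} and Corollary~\ref{product:of:Telgarsky:spaces}, together with Remark~\ref{countably_projection:G_delta}; this is fine. For (i),(ii)$\Rightarrow$(viii) you should record that $G$, being dense in a product of separable metric groups, is c.c.c.\ and (by \cite[Corollary~8.1.15]{AT}) $\R$-factorizable, so that the hypotheses of Corollary~\ref{polish_fact} are actually met.

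The real gap is (viii)$\Rightarrow$(ix). Factoring $\sigma_J$ through a Polish group $H'_J$ gives only that $h_J(H'_J)=\sigma_J(G)$ is \emph{dense} in $\prod_{i\in J}H_i$; a continuous homomorphic image of a Polish group can perfectly well be a proper dense subgroup (think of an irrational winding line in the torus), so you cannot upgrade to surjectivity this way. Your appeal to the open mapping theorem is circular: that theorem requires the target to already be Polish (or at least Baire), which is precisely what you are trying to establish. The closing sentence about applying Polish factorizability ``uniformly across all continuous homomorphisms'' is a description of a hope, not an argument.

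The paper avoids this trap by not going through (viii) on the way to (ix). Instead it proves (i)$\vee$(ii)$\Rightarrow$(ix) directly via Theorem~\ref{O-omega-discrete}, whose engine is Theorem~\ref{SOp:are*polish:factorizable} applied not with $\mathscr{F}=\mathbb{H}_G$ (which would only yield Corollary~\ref{polish_fact}) but with the countably closed cofinal subfamily $\mathscr{F}=\{p_J:J\subseteq I\text{ countable}\}$ of coordinate projections. The conclusion is that projections with \emph{Polish image} are cofinal among all projections: given countable $J$ one finds countable $M$ with $p_J\preceq p_M$ and $p_M(G)$ Polish, argues $J\subseteq M$, and then Corollary~\ref{dense_cech} forces $p_M(G)=\prod_{i\in M}\varrho H_i$; projecting down gives $\sigma_J(G)=\prod_{i\in J}H_i$ and $H_i=\varrho H_i$. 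The point is that the Polish factor is itself a projection, so Corollary~\ref{dense_cech} applies inside a known ambient group. Condition (viii) then joins the cycle via the easy direction (ix)$\Rightarrow$(viii): under (ix) every continuous homomorphism of $G$ to a separable metric group factors (by \cite[Corollary~1.7.8]{AT}) through some $p_J$, and $p_J(G)=\prod_{i\in J}H_i$ is a countable product of Polish groups, hence Polish.
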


Item (ix) of this theorem says that \v{C}ech-completeness of all groups $H_i$ 
is a necessary condition for some dense subgroup $G$ of the product $H=\prod_{i\in I} H_i$ to have one of the equivalent completeness properties from items (i)--(viii) of this theorem. 

Proof of Theorem
\ref{equivalence:for:dense:subgroups:without:cech}
is postponed until Section \ref{sec:proofs}. 

\section{Applications to $C_p$-theory}
\label{C_p-section}

We are going to denote by $C_p(X,Y)$ the space of continuous functions 
from $X$ to $Y$ with the topology of the pointwise convergence, that is, the topology on $C_p(X,Y)$ is the topology generated by all sets of the form
$
\{f\in C(X,Y):f(x_i)\in V_i\ \text{ for }\  i=0,\ldots,n\},
$
where $n \in \N$, $x_0,\ldots,x_n\in X$ and $V_0,\ldots, V_n$ are open subsets of $Y$. When $Y$ is the real line $\mathbb{R}$ with its usual topology, we write $C_p(X)$ instead of $C_p(X,\mathbb{R})$. 

If $G$ is a group, take $f,g\in C_p(X,G)$ and define $(fg)(x)=f(x)g(x)$ and $(f^{-1})(x)=(f(x))^{-1}$.
With this operation, $C_p(X,G)$ is a subgroup of $G^X$, with the identity function $e$ defined by $e(x)=e_Y$ for every $x\in X$.  

For a topological group $G$, the function space $C_p(X,G)$ has been studied in 
\cite{SS} and recently in 
\cite{DRT}. The reader interested in $C_p$-theory can consult
the classical monograph of  V.V. Tkachuk \cite{Tka}. 

\begin{definitions}
Let $X$ and $Y$ be topological spaces.
\begin{enumerate}
\item A subspace $N$ of $X$ is {\em $C_Y$-embedded\/} in $X$ if every continuous function $f : N \to Y$ 
has a continuous extension over $X$.
\item A 
$X$
is {\em $b_Y$-discrete\/} if every countable subspace of $X$ is discrete and
$C_Y$-embedded in $X$.
\end{enumerate}
\end{definitions}

Our main result in $C_p$-theory is the following 

\begin{theorem}\label{Cp:theorem}
Let $G$ be a separable metric group and $X$ be a topological space such that $C_p(X,G)$ is 
dense in $G^X$. Then the following conditions are equivalent:
\begin{itemize}
\item[(i)] $C_p(X,G)$ is \SOp;
\item[(ii)] $C_p(X,G)$ is \Op; 
\item[(iii)] $C_p(X,G)$ is \Telgarsky;
\item[(iv)] $C_p(X,G)$ is \sSOp;
\item[(v)] $C_p(X,G)$ is \sOp; 
\item[(vi)] $C_p(X,G)$ is S\'anchez-Okunev \cc;
\item[(vii)] $C_p(X,G)$ is Oxtoby \cc; 
\item[(viii)] $C_p(X,G)$ is Polish factorizable;
\item [(ix)] $C_p(X,G)$ is $G_\delta $-dense in $G^X$ and $G$ is \v{C}ech-complete;
\item [(x)] $X$ is $b_G$-discrete and $G$ is \v{C}ech-complete.
\end{itemize}  
\end{theorem}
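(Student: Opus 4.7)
The plan is to reduce Theorem~\ref{Cp:theorem} to Theorem~\ref{equivalence:for:dense:subgroups:without:cech} by viewing $G^X = \prod_{x \in X} G$ as the Cartesian product of copies of the separable metric group $G$ indexed by $X$. Since $C_p(X,G)$ is assumed dense in $G^X$, that earlier theorem applies directly to $C_p(X,G)$ as a dense subgroup and yields the mutual equivalence of items (i)--(viii) here, each of which is equivalent to the assertion ``$C_p(X,G)$ is $G_\delta$-dense in $G^X$ and every factor $G$ is \v{C}ech-complete''; this is precisely condition (ix). Thus the only genuinely new content is the equivalence (ix)~$\Leftrightarrow$~(x).

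Since \v{C}ech-completeness of $G$ appears on both sides, it will suffice to show that, under the standing density hypothesis, $C_p(X,G)$ is $G_\delta$-dense in $G^X$ if and only if $X$ is $b_G$-discrete. The first step I would carry out is a \emph{countable-support lemma} for $G_\delta$ sets in $G^X$: every non-empty $G_\delta$ subset $U$ of $G^X$ contains a fiber of the form $\pi_N^{-1}(\{y\})$ for some countable $N \subseteq X$ and some $y \in G^N$, where $\pi_N : G^X \to G^N$ is the canonical projection. I would prove this by writing $U = \bigcap_{n \in \N} U_n$ with $U_n$ open, fixing $p \in U$ and basic open boxes $B_n \subseteq U_n$ through $p$ with finite support $F_n \subseteq X$, taking $N = \bigcup_{n \in \N} F_n$, and then using that the countable product $G^N$ is metrizable so that $\{\pi_N(p)\}$ is a $G_\delta$ in $G^N$; its $\pi_N$-preimage then sits inside $\bigcap_n B_n \subseteq U$.

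Armed with this lemma, $G_\delta$-density of $C_p(X,G)$ in $G^X$ translates into the statement that for every countable $N \subseteq X$ and every $y \in G^N$ there exists $g \in C_p(X,G)$ with $g|_N = y$. Provided $G$ is non-trivial (the case $|G|=1$ is vacuous on both sides), the ability to realize \emph{every} $y \in G^N$ this way forces $N$ to be discrete in $X$: if not, $N$ would contain a convergent sequence $x_n \to x$ in $N$, and a suitably ``non-convergent'' choice of $y$ on that sequence would block any continuous extension. Once $N$ is known to be discrete, every $y : N \to G$ is automatically continuous, so the extension condition collapses to $C_G$-embeddedness of $N$ in $X$; together with discreteness of every countable $N$ this is exactly the definition of $b_G$-discreteness. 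The main obstacle I anticipate is setting up the countable-support lemma carefully enough that it captures all $G_\delta$ subsets of $G^X$ (not merely the evidently cylindrical ones); once that is in place, the matching of the extension condition with the definition of $b_G$-discreteness is routine.
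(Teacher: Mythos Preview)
Your reduction of (i)--(ix) to Theorem~\ref{equivalence:for:dense:subgroups:without:cech} is exactly what the paper does. For (ix)$\Leftrightarrow$(x) the paper simply invokes \cite[Proposition~4.6]{DRT} (using only first countability of $G$), whereas you supply a direct argument; your countable-support lemma is correct and yields the desired translation of $G_\delta$-density into the extension property for countable $N\subseteq X$.

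There is one slip in your implication ``$G_\delta$-dense $\Rightarrow$ $b_G$-discrete'': a countable non-discrete subspace of $X$ need \emph{not} contain a nontrivial convergent sequence---the Arens--Fort space is countable, non-discrete, yet sequentially discrete. The repair is immediate and avoids sequences altogether: if $N$ has a non-isolated point $x$ and $a\neq b$ in $G$, set $y(x)=a$ and $y(z)=b$ for $z\in N\setminus\{x\}$; then $y$ is discontinuous at $x$ (the preimage of any open set separating $a$ from $b$ is $\{x\}$), hence cannot be the restriction of any continuous $g:X\to G$. A smaller point: the case $|G|=1$ is not ``vacuous on both sides''---condition~(ix) holds trivially, but~(x) still requires every countable subspace of $X$ to be discrete---so you should assume $G$ nontrivial rather than dismiss this case.
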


\proof
Since $G$ is first countable, $X$ is $b_G$-discrete if and only if 
$C_p(X,G)$ is $G_\delta $-dense in $G^X$ by \cite[Proposition 4.6]{DRT}. Hence, (ix) is equivalent to (x). 
Since $C_p(X,G)$ is a dense subgroup of $G^X$ by our assumption, the 
equivalence of items (i)--(ix) follows 
from Theorem \ref{equivalence:for:dense:subgroups:without:cech}.
\endproof

When $G=\mathbb{R}$, 
as a corollary we obtain the following classical result of V.V. Tkachuk:

\begin{corollary}\label{Theorem:4.1:Tk}\cite[Theorem 4.1]{Tk} The following conditions are equivalent for every space $X$: 
\begin{itemize}
\item[(i)] $X$ is $b_\R$-discrete;
\item[(ii)] $C_p(X)$ is \Op\/;
\item[(iii)] $\upsilon (C_p(X))=\mathbb{R}^{X}$, where $\upsilon Y$ denotes the Hewitt realcompactification of a space $Y$.
\end{itemize}
\end{corollary}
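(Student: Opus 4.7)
The plan is to derive this corollary directly as a specialization of Theorem \ref{Cp:theorem} to the case $G = \R$, supplemented by one classical fact from realcompactification theory to handle item (iii). First I would verify that the hypotheses of Theorem \ref{Cp:theorem} are met for $G = \R$: since $X$ is Tychonoff, $C_p(X) = C_p(X,\R)$ is automatically dense in $\R^X$, because any basic open set in $\R^X$ constrains only finitely many coordinates $x_1,\dots,x_n$, and a Tychonoff space admits continuous real-valued functions realizing any prescribed values at finitely many points. Moreover, $\R$ is \v{C}ech-complete.

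Because $\R$ is \v{C}ech-complete, the ``$G$ is \v{C}ech-complete'' clauses appearing in items (ix) and (x) of Theorem \ref{Cp:theorem} hold trivially. Thus item (x) of that theorem reduces to our (i), item (ii) of that theorem is literally our (ii), and item (ix) of that theorem reduces to the auxiliary statement ``$C_p(X)$ is $G_\delta$-dense in $\R^X$.'' An application of Theorem \ref{Cp:theorem} therefore yields the equivalence of (i), (ii), and this auxiliary statement in one stroke, with no $C_p$-specific machinery required beyond density.

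It remains to show that (iii) is equivalent to ``$C_p(X)$ is $G_\delta$-dense in $\R^X$.'' Here I would invoke the classical fact that if $Y$ is a dense subspace of a realcompact space $Z$, then $\upsilon Y = Z$ if and only if $Y$ is $G_\delta$-dense in $Z$. Since $\R$ is realcompact, so is any power $\R^X$, and $C_p(X)$ is dense in $\R^X$ as already noted; applying the fact with $Y = C_p(X)$ and $Z = \R^X$ finishes the proof.

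The only genuine step that is not immediate from Theorem \ref{Cp:theorem} is the equivalence ``$\upsilon Y = Z \iff Y\ \text{is}\ G_\delta\text{-dense in}\ Z$'' for dense $Y$ in a realcompact $Z$; this is the main (though mild) obstacle. In one direction, $Y$ is always $G_\delta$-dense in $\upsilon Y$ by the standard construction of $\upsilon Y$ inside $\beta Y$ via zero-set filters. In the reverse direction, $G_\delta$-density of $Y$ in the realcompact $Z$ forces $Y$ to be $C$-embedded in $Z$ (any continuous $f : Y \to \R$ failing to extend would produce a zero-set of $\beta Y$ meeting $Z \setminus Y$ but disjoint from $Y$), whence the universal property of $\upsilon Y$ and realcompactness of $Z$ together yield $Z = \upsilon Y$. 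This classical argument is independent of our framework and is the only ingredient not already contained in Theorem \ref{Cp:theorem}.
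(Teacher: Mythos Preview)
Your derivation of (i)$\Leftrightarrow$(ii) via Theorem~\ref{Cp:theorem} is correct and matches the paper exactly. The problem is your treatment of (iii).

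The ``classical fact'' you invoke --- that for $Y$ dense in a realcompact space $Z$, one has $\upsilon Y = Z$ if and only if $Y$ is $G_\delta$-dense in $Z$ --- is \emph{false} in the $\Leftarrow$ direction. A counterexample is already implicit in the paper: take $Y$ to be an uncountable discrete space (say of cardinality $\omega_1$) and $Z$ its one-point compactification. Then $Y$ is $G_\delta$-dense in $Z$ (any $G_\delta$-set containing the point at infinity is co-countable), $Z$ is compact and hence realcompact, yet $\upsilon Y = Y \neq Z$ since a discrete space of non-measurable cardinality is already realcompact. Your sketched justification for $\Leftarrow$ speaks of ``a zero-set of $\beta Y$ meeting $Z\setminus Y$,'' but $Z$ need not embed in $\beta Y$ at all unless $Y$ is $C^*$-embedded in $Z$, which is precisely what is at stake.

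The paper closes this gap by invoking an additional structural property of the specific space $\R^X$: it is a \emph{Moscow} space (\cite[Corollary~6.3.15]{AT}), and in Moscow spaces a dense subspace is $C$-embedded if and only if it is $G_\delta$-dense (\cite[Theorems~6.1.4 and~6.1.7]{AT}). Combining this with the standard fact \cite[Theorem~8.7]{GJ} that $\upsilon Y = Z$ iff $Y$ is $C$-embedded in $Z$ (for $Y$ dense in realcompact $Z$) gives the equivalence you need. So your overall architecture is right, but the bridge to (iii) requires the Moscow property of $\R^X$, not a general fact about realcompact spaces.
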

\proof
The equivalence (i)$\Leftrightarrow$(ii) follows from the equivalence of items (ii) and (x) in Theorem \ref{Cp:theorem} applied to $G=\mathbb{R}$.

Let us establish the equivalence (ii)$\Leftrightarrow$(iii).
Observe that 
$\R^X$ is a realcompact space; see \cite[Theorem 8.11]{GJ}. 
Since $C_p(X)$ is dense in $\R^X$, by 
\cite[Theorem 8.7]{GJ}, $\upsilon (C_p(X))=\R^X$ if and only if 
$C_p(X)$ is $C$-embedded in $\R^X$. By \cite[Corollary 6.3.15]{AT}, $\R^X$ is a Moscow space. 
By \cite[Theorems 6.1.4 and 6.1.7]{AT}, $C_p(X)$ is $C$-embedded in $\R^X$ if and only if 
$C_p(X)$ is $G_\delta $-dense in $\R^X$.
The equivalence (ii)$\Leftrightarrow$(ix) in Theorem \ref{Cp:theorem} applied to $G=\mathbb{R}$ implies that 
$C_p(X)$ is $G_\delta $-dense in $\R^X$ if and only if  $C_p(X)$ is \Op.
\endproof

It is worth emphasizing that not only Corollary \ref{Theorem:4.1:Tk} is not used in the proof of Theorem \ref{Cp:theorem}, but our proof 
of Theorem \ref{Cp:theorem} is totally different from the original proof of
 Corollary \ref{Theorem:4.1:Tk}. In fact, our proof is new even in the case $G=\mathbb{R}$.

Earlier result of D.J. Lutzer and R.A. McCoy \cite[Theorem 8.4]{LMc} also follows from Theorem \ref{Cp:theorem}.

The following corollary of Theorem \ref {Cp:theorem} seems somewhat surprising:
\begin{corollary}
Let $G$ be a separable metric group and $X$ be a topological space such that $C_p(X,G)$ is 
dense in $G^X$. If $C_p(X,G)$ has any of the equivalent properties (i)--(viii) from Theorem \ref {Cp:theorem}, then $G$ must be \v{C}ech-complete.
\end{corollary}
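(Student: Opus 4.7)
The statement is essentially an immediate harvesting of information already packaged inside Theorem \ref{Cp:theorem}, so my plan is to give a one-step derivation rather than any independent argument.

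First I would verify that the hypotheses of Theorem \ref{Cp:theorem} are satisfied: $G$ is a separable metric group (given), and $C_p(X,G)$ is dense in $G^X$ (given). Thus the equivalence of conditions (i)--(x) of Theorem \ref{Cp:theorem} is available. In particular, since the hypothesis of the corollary assumes that $C_p(X,G)$ enjoys one of the properties (i)--(viii), condition (ix) of Theorem \ref{Cp:theorem} holds as well. Condition (ix) reads ``$C_p(X,G)$ is $G_\delta$-dense in $G^X$ \textbf{and} $G$ is \v{C}ech-complete.'' Extracting the second conjunct yields the desired conclusion.

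The only potentially nontrivial point to stress, and the one that makes the corollary look ``surprising'' as the authors note, is that the \v{C}ech-completeness of $G$ is not postulated anywhere in the hypothesis; rather, it is forced by any of the internal completeness properties of $C_p(X,G)$ together with the denseness of $C_p(X,G)$ in $G^X$. There is no obstacle in the proof itself: all the work is already contained in Theorem \ref{Cp:theorem}, whose proof in turn reduces via the denseness of $C_p(X,G)$ in the product $G^X$ to Theorem \ref{equivalence:for:dense:subgroups:without:cech}, where the necessity of \v{C}ech-completeness of each factor appears as part of condition (ix). So the proposal is simply to write ``Immediate from the equivalence (i)--(ix) in Theorem \ref{Cp:theorem}.''
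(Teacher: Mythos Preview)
Your proposal is correct and matches the paper's treatment exactly: the paper states this corollary without proof, as it is immediate from the equivalence of items (i)--(ix) in Theorem~\ref{Cp:theorem}, with \v{C}ech-completeness of $G$ being read off from the second conjunct of condition (ix).
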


\begin{corollary}\label{Cp:corolary}
Let $G$ be a compact metric group and $X$ be a topological space such that $C_p(X,G)$ is 
dense in $G^X$. Then the following conditions are equivalent:
\begin{itemize}
\item[(i)] $C_p(X,G)$ is pseudocompact;
\item[(ii)] $C_p(X,G)$ is weakly pseudocompact;
\item[(iii)] $C_p(X,G)$ is \SOp;
\item[(iv)] $C_p(X,G)$ is \Op; 
\item[(v)] $C_p(X,G)$ is \Telgarsky;
\item[(vi)] $C_p(X,G)$ is \sSOp;
\item[(vii)] $C_p(X,G)$ is \sOp;
\item[(viii)] $C_p(X,G)$ is \sTep; 
\item[(ix)] $C_p(X,G)$ is S\'anchez-Okunev \cc;
\item[(x)] $C_p(X,G)$ is Oxtoby \cc; 
\item[(xi)] $C_p(X,G)$ is Polish factorizable;
\item [(xii)] $C_p(X,G)$ is $G_\delta $-dense in $G^X$;
\item [(xiii)] $X$ is $b_G$-discrete.
\end{itemize}  
\end{corollary}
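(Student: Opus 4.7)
The plan is to derive this corollary as a straightforward combination of Theorem~\ref{Cp:theorem} and Theorem~\ref{equivalent:conditions:for:precompact}. The key observation is that the extra hypothesis ``$G$ is compact'' (on top of ``$G$ is separable metric'' from Theorem~\ref{Cp:theorem}) does two things at once: it makes $G$ \v{C}ech-complete (indeed, compact), and it makes $G^X$ compact by Tychonoff's theorem, so that the dense subgroup $C_p(X,G)$ of $G^X$ is automatically precompact. These two facts are precisely what is needed to bring both theorems to bear on the same object.

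First, I would apply Theorem~\ref{Cp:theorem} directly to the pair $(X,G)$. Since $G$ is separable metric (compact metric spaces are second countable) and $C_p(X,G)$ is dense in $G^X$ by hypothesis, the theorem applies. Because $G$ is \v{C}ech-complete, the clauses ``and $G$ is \v{C}ech-complete'' in items (ix) and (x) of Theorem~\ref{Cp:theorem} are vacuous, so those items simplify to exactly items (xii) and (xiii) of the present corollary. This yields the equivalence of items (iii), (iv), (v), (vi), (vii), (ix), (x), (xi), (xii), (xiii) of the corollary.

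Next, I would apply Theorem~\ref{equivalent:conditions:for:precompact} to the precompact topological group $C_p(X,G)$. This gives the equivalence of items (i), (ii), (iii), (iv), (v), (vi), (vii), (viii), (ix), (xi) of the corollary. Linking the two chains through any common item (for instance, item~(iii), \SOp, which appears in both) closes the loop and proves all thirteen conditions equivalent. The only items missing from the Theorem~\ref{Cp:theorem} chain, namely (i), (ii), (viii), are supplied by Theorem~\ref{equivalent:conditions:for:precompact}; conversely, the items (x), (xii), (xiii) missing from the precompact chain are supplied by Theorem~\ref{Cp:theorem}.

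There is no real obstacle here: the corollary is a pure assembly of the two main theorems. The only minor verifications needed are the structural observations above, namely that $G$ compact forces $C_p(X,G)$ to be a subgroup of the compact group $G^X$ (hence precompact), and that a compact group is automatically \v{C}ech-complete, which trivializes the secondary clauses in Theorem~\ref{Cp:theorem}(ix)(x). All substantive content has already been absorbed into those two previous theorems.
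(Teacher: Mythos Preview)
Your proposal is correct and is exactly the argument the paper has in mind: the paper states Corollary~\ref{Cp:corolary} without proof, treating it as an immediate consequence of Theorem~\ref{Cp:theorem} together with Theorem~\ref{equivalent:conditions:for:precompact}, via the observation that compactness of $G$ both trivializes the \v{C}ech-completeness clauses in Theorem~\ref{Cp:theorem}(ix)(x) and makes $C_p(X,G)$ a precompact group (as a subgroup of the compact group $G^X$). Your write-up spells out precisely this implicit derivation.
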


Corollary \ref{Cp:corolary} 
strengthens substantially
Theorem 7.2 in \cite{DRT}. 

A host of conditions on $X$ which guarantee that $C_p(X,G)$ 
is dense in $G^X$ can be found in \cite{SS}.

\section{Completeness properties in metric spaces and groups: Proofs of Theorems \ref{coincidence:for:metric:spaces} and 
\ref{metric:Todd:are:Cech-complete}}
\label{section:metric}

Let $X$ be a metric space with a metric $d$. For a bounded subset $A$ of $X$,
let $\mathrm{diam} (A)=\sup\{d(a,b):a,b\in A\}$.
For a family $\A$ of subsets of 
$X$, we write
$\mathrm{diam}(\A)<\varepsilon$ when $\mathrm{diam}(A)<\varepsilon$ for all $A\in\A$.
We use $B_X(x,\varepsilon )$ to denote the ball in $X$ with the center $x\in X$
and the radius $\varepsilon >0$.

\begin{lemma}
\label{inscription:lemma}
Suppose that $X$ is a metric space, ${\cal U}$ is a pairwise disjoint family 
of non-empty open subsets of $X$ such that $\bigcup{\cal U}$ is dense in $X$,
$\B$ is a pseudobase of $X$ and $\delta>0$.
Then there exists $\A\subseteq \B$ such that:
\begin{itemize}
\item[(i)] $\Int_X (\A)=\{\Int_X(A):A\in\A\}$ is a pairwise disjoint family;
\item[(ii)] $\mathrm{diam}(\A)<\delta$;
\item[(iii)] for every $A\in\A$ one can find  $U\in {\cal U}$ such that
$\Cl_X(A)\subseteq U$;
\item[(iv)] $\bigcup\Int_X (\A)$ is dense in $X$.
\end{itemize}
\end{lemma}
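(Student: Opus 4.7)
The plan is to obtain $\mathcal{A}$ by a standard maximality (Zorn) argument applied to the poset $\mathcal{P}$ of all subfamilies of $\mathcal{B}$ already satisfying conditions (i), (ii) and (iii), ordered by inclusion. The empty family vacuously belongs to $\mathcal{P}$, and for every chain $\mathcal{C}\subseteq \mathcal{P}$ its union $\mathcal{A}^{*}=\bigcup\mathcal{C}$ again lies in $\mathcal{P}$: properties (ii) and (iii) are elementwise, and for (i) any two distinct members of $\mathcal{A}^{*}$ already sit inside some single element of the chain, where disjointness of interiors is guaranteed. Zorn's lemma then produces a maximal $\mathcal{A}\in\mathcal{P}$, which automatically fulfills (i)--(iii); the whole content of the lemma is that this maximal $\mathcal{A}$ must also satisfy (iv).

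To establish (iv), I will argue by contradiction. Assume $W=X\setminus \Cl_X\bigl(\bigcup \Int_X(\mathcal{A})\bigr)$ is non-empty; it is open in $X$. Since $\bigcup\mathcal{U}$ is dense in $X$, some $U\in\mathcal{U}$ meets $W$. Pick $x\in W\cap U$ and, using that $X$ is metric and $W\cap U$ is an open neighborhood of $x$, choose $r>0$ small enough that $\Cl_X(B_X(x,r))\subseteq W\cap U$ and $2r<\delta$. Because $\mathcal{B}$ is a pseudobase, the non-empty open set $B_X(x,r)$ contains some $A\in\mathcal{B}$.

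Now I will verify that $\mathcal{A}\cup\{A\}\in\mathcal{P}$, which contradicts maximality. The diameter bound $\mathrm{diam}(A)\le \mathrm{diam}(B_X(x,r))\le 2r<\delta$ gives (ii), and $\Cl_X(A)\subseteq \Cl_X(B_X(x,r))\subseteq U$ gives (iii). For (i), note that $\Int_X(A)\subseteq A\subseteq W$, and $W$ is disjoint from $\bigcup \Int_X(\mathcal{A})$ by construction, so $\Int_X(A)$ is disjoint from every $\Int_X(A')$ with $A'\in\mathcal{A}$; finally, $\Int_X(A)\neq\emptyset$ because $A\in\mathcal{B}$, so the family of interiors remains pairwise disjoint (rather than collapsing trivially). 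This contradicts maximality and forces (iv).

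The argument is not difficult conceptually, but there are two small traps to watch. First, condition (i) is about interiors, not about the sets $A$ themselves, so in the maximality step it is essential that $\Int_X(A)$ land inside the open set $W$ that avoids the previously chosen interiors; this is exactly why $W$ is defined using $\Cl_X$. Second, the radius $r$ must be chosen so that $\Cl_X(B_X(x,r))$ — not merely $B_X(x,r)$ — sits inside both $W$ and $U$, in order to secure (iii) simultaneously with (i). Once these are handled, everything else is routine.
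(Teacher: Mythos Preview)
Your proof is correct and follows essentially the same Zorn's-lemma-plus-contradiction argument as the paper: both take a maximal $\mathcal{A}\subseteq\mathcal{B}$ satisfying (i)--(iii), assume $\bigcup\Int_X(\mathcal{A})$ fails to be dense, and locate a new $A\in\mathcal{B}$ inside a small ball contained in some $U\cap W$ to contradict maximality. The only cosmetic difference is that the paper picks $\varepsilon$ with $B(x_0,\varepsilon)\subseteq W\cap U$ and then shrinks to $B(x_0,\varepsilon/2)$ to force $\Cl_X(B)\subseteq B(x_0,\varepsilon)$, whereas you directly choose $r$ with $\Cl_X(B_X(x,r))\subseteq W\cap U$; these are equivalent maneuvers.
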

\proof
Let $\A$ be a maximal subfamily of the family $\B$ having properties (i)--(iii); such a family exists by Zorn's lemma. 
It remains only to show that (iv) holds. Suppose the contrary.
Then $V=X\setminus\Cl_X(\bigcup\Int_X (\A))$ is a non-empty open subset of $X$.
Since $\bigcup{\cal U}$ is dense in $X$ by our assumption, 
$W=V\cap U\not=\emptyset$ for some $U\in{\cal U}$.
Choose $x_0\in W$. There exists $\varepsilon>0$ such that 
$B(x_0,\varepsilon)\subseteq W$.
Without loss of generality, we may assume that $\varepsilon<\delta$.
Since $\B$ is a pseudobase of $X$, 
$\Int_X(B)\subseteq B\subseteq B(x_0,\varepsilon/2)$ for some $B\in\B$.

We claim that the subfamily $\A^*=\A\cup\{B\}$ of $\B$ satisfies conditions (i)--(iii) as well. Indeed, $\Int_X(B)\subseteq W\subseteq V$, so from the choice of $V$ we conclude that $\Int_X(B)\cap \Int_X(A)=\emptyset$ for all $A\in\A$.
This establishes (i) for $\A^*$.
Since $B\subseteq B(x_0,\varepsilon/2)$, we have $\mathrm{diam}(B)\le \varepsilon<\delta$ and $\Cl_X(B)\subseteq B(x_0,\varepsilon)\subseteq W\subseteq U$. This proves both (ii) and (iii) for $\A^*$.

Since $B\not\in\A$, we have $\A\subseteq\A^*$ and $\A\neq\A^*$. This contradicts the maximality of $\A$.
\endproof

Our next theorem extends \cite[Corollary 2.4]{AL}
from \Op ness to \Tp ness.

\begin{theorem}
\label{T:gives:dense:complete}
Every  \Tp\ metric space $X$
contains a dense (zero-dimensional) completely metrizable subspace $Z$ having a \cc\ clopen base $\mathcal{C}$.
\end{theorem}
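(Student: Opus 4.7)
My plan is to build $Z$ and $\mathcal{C}$ simultaneously by a tree-shaped inductive refinement of $X$ using Lemma~\ref{inscription:lemma}. Fix a complete sequence $\{\B_n:n\in\N\}$ witnessing the Todd completeness of $X$, and fix a metric $d$ on $X$. Starting with $\U_0=\{X\}$, I would recursively apply Lemma~\ref{inscription:lemma} to $\U_n=\Int_X(\A_{n-1})$, the pseudobase $\B_n$, and $\delta=1/(n+1)$, obtaining for each $n\in\N$ a family $\A_n\subseteq\B_n$ with pairwise disjoint interiors, with $\mathrm{diam}(\A_n)<1/(n+1)$, with dense union $Y_n=\bigcup\Int_X(\A_n)$ in $X$, and such that every $A\in\A_n$ (for $n\geq 1$) has $\Cl_X(A)\subseteq\Int_X(A')$ for a unique \emph{parent} $A'\in\A_{n-1}$. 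Then let $Z=\bigcap_{n\in\N}Y_n$ and $\mathcal{C}=\{\Int_X(A)\cap Z:A\in\A_n,\ n\in\N\}$. Since Todd completeness implies the Baire property of $X$, the intersection of the dense open sets $Y_n$ is dense, so $Z$ is dense in $X$. Each member of $\mathcal{C}$ is clopen in $Z$ because the complement in $Z$ of $\Int_X(A)\cap Z$ with $A\in\A_n$ equals $\bigcup\{\Int_X(A'')\cap Z:A''\in\A_n,\ A''\not=A\}$, which is open.

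Next I would verify that $Z$ is completely metrizable and that $\mathcal{C}$ is a base. For each $x\in Z$ and each $n$, the disjointness of $\Int_X(\A_n)$ singles out a unique $A_n(x)\in\A_n$ with $x\in\Int_X(A_n(x))$, and $\mathrm{diam}(A_n(x))\to 0$ forces $\mathcal{C}$ to be a base of $Z$. The map $\phi:Z\to\prod_{n\in\N}\A_n$ (each factor discrete) given by $\phi(x)=(A_n(x))_n$ is a homeomorphism onto its image, and this image coincides with the set of all tree-branches, that is, those $(A_n)\in\prod_n\A_n$ satisfying $\Cl_X(A_{n+1})\subseteq\Int_X(A_n)$ for every $n$: any such branch is realized by a unique point of $Z$, because the centered property of $\{\B_n\}$ gives a non-empty $\bigcap_n A_n$ which, in view of $\mathrm{diam}(A_n)\to 0$, is a single point lying in every $\Int_X(A_n)$. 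The branch condition is closed in $\prod_n\A_n$, so $Z$ embeds as a closed subspace of a countable product of discrete metric spaces; in particular $Z$ is completely metrizable and zero-dimensional.

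The main technical obstacle is to show that $\mathcal{C}$ is countably compact. Given a decreasing sequence $\{\Int_X(A_k)\cap Z:k\in\N\}\subseteq\mathcal{C}$ with $A_k\in\A_{n_k}$, if the levels $n_k$ are bounded then the pigeonhole principle combined with the pairwise disjointness of the interiors inside a fixed $\A_n$ forces the sequence to be eventually constant, so its intersection is non-empty. Otherwise I pass to a subsequence with strictly increasing $n_{k_j}$, and the key sub-claim is that $A_{k_{j+1}}$ is a tree-descendant of $A_{k_j}$: density of $Z$ in $X$ yields $\Int_X(A_{k_{j+1}})\subseteq\Cl_X(\Int_X(A_{k_j}))$, so if the ancestor $P\in\A_{n_{k_j}}$ of $A_{k_{j+1}}$ were not $A_{k_j}$, then $\Int_X(P)\cap\Int_X(A_{k_j})=\emptyset$ would force the non-empty open set $\Int_X(A_{k_{j+1}})\subseteq\Int_X(P)$ into the (nowhere dense) boundary of $\Int_X(A_{k_j})$, a contradiction. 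Once the tree-descendant relation is established, I would fill in the entire chain of ancestors to produce a sequence $(P_n)_{n\in\N}$ with $P_n\in\A_n\subseteq\B_n$ and $P_{n_{k_j}}=A_{k_j}$ for every $j$, nested in the sense of Definition~\ref{def:nested}. The centered property of $\{\B_n\}$ then yields a point in $\bigcap_n P_n$; such a point lies in $\Int_X(P_n)$ for every $n$, hence in $Z$, and in $\Int_X(A_{k_j})=\Int_X(P_{n_{k_j}})$ for every $j$. The chief difficulty lies precisely in the parent-identification sub-claim, which is what enables the reduction of these sparse sub-sequences back to the full centered-sequence hypothesis provided by the Todd completeness of $X$.
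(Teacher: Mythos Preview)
Your proposal is correct and follows the same overall architecture as the paper: both build the families $\A_n\subseteq\B_n$ via Lemma~\ref{inscription:lemma}, set $Z=\bigcap_n\bigcup\Int_X(\A_n)$ and $\mathcal{C}=\{\Int_X(A)\cap Z:A\in\bigcup_n\A_n\}$, and prove countable compactness of $\mathcal{C}$ by reducing a decreasing sequence in $\mathcal{C}$ to a full nested branch $(P_n)_{n\in\N}$ through the tree and invoking the centered property of $\{\B_n\}$.

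The one genuine difference is in how complete metrizability of $Z$ is obtained. The paper fixes a compactification $K\supseteq X$, extends each $\Int_X(A)$ to an open set $V_A\subseteq K$, and shows that $Z$ is a $G_\delta$-set in $K$ (hence \v{C}ech-complete); several of the paper's intermediate claims (for instance, the equivalence of $C_{A_{n+1}}\subseteq C_{A_n}$ with $V_{A_{n+1}}\cap V_{A_n}\neq\emptyset$ in Claim~\ref{three:inclusions}) are formulated in terms of these $V_A$'s. You avoid the compactification entirely, realizing $Z$ as a closed subspace of the countable product $\prod_n\A_n$ of discrete spaces and reading off complete metrizability (and zero-dimensionality) directly. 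Likewise, your parent-identification step, based on the observation that density of $Z$ in $X$ forces $\Int_X(A_{k_{j+1}})\subseteq\Cl_X(\Int_X(A_{k_j}))$, is a more intrinsic substitute for the paper's Claim~\ref{cl:3}, which routes through the $V_A$'s in $K$. Both approaches work; yours is somewhat more self-contained in that it never leaves the metric space $X$, while the paper's compactification argument yields \v{C}ech-completeness explicitly as a $G_\delta$ in a compact space.
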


\proof
By Definition 
\ref{four:properties}(i), 
$X$ has a complete sequence
$\{\cal B_n:n\in \omega \}$.

By induction on $n\in\N$, we shall
define $\A_n\subseteq \B_n$ satisfying the following conditions:

\begin{itemize}
\item[(i$_n$)] $\Int_X (\A_n)$ is a pairwise disjoint family;
\item[(ii$_n$)] $\mathrm{diam}(\A_n)<1/(n+1)$;
\item[(iii$_n$)] if $n\ge 1$, then for every $A\in\A_n$ there exists $A'\in \A_{n-1}$ such that
$\Cl_X(A)\subseteq \Int_X(A')$;
\item[(iv$_n$)] $\bigcup\Int_X (\A_n)$ is dense in $X$.
\end{itemize}

First, we apply Lemma \ref{inscription:lemma} to ${\cal U}=\{X\}$, $\B=\B_0$
and $\delta=1/2$
to get $\A_0\subseteq \B_0$ satisfying (i$_0$)--(iv$_0$).
Assuming that $n\ge 1$ and $\A_{n-1}\subseteq \B_{n-1}$ satisfying 
(i$_{n-1}$)--(iv$_{n-1}$) has already been constructed,
we apply Lemma \ref{inscription:lemma} to ${\cal U}=\Int_X (\A_{n-1})$, $\B=\B_n$
and $\delta=1/(n+1)$
to get $\A_n\subseteq \B_n$ satisfying (i$_n$)--(iv$_n$).

For each $n\in \N$, the set  
\begin{equation}
\label{eq:W_n}
W_n=\bigcup\Int_X (\A_n)
\end{equation}
is open and dense in $X$ by (iv$_n$). 
Since $X$ is \Tp, $X$ is a Baire space, so
\begin{equation}
\label{eq:Z}
Z=\bigcap\{W_n:n\in \N \}
\end{equation}
is dense in $X$.
Define
\begin{equation}
\label{eq:O_A:C_A}
\ 
O_A=\Int_X(A)
\ 
\mbox{ and }
\ 
C_A=O_A\cap Z=\Int_X(A)\cap Z
\ 
\mbox{ for each }
\ 
A\in\bigcup_{n\in\N} \A_n.
\end{equation}
Let $K$ be a compact space containing $X$ as a dense subspace.
(For example, one can take $K$ as $K$.)
For every $A\in\bigcup_{n\in\N} \A_n$, since $O_A$ is an open subset of $X$,
we can fix an open subset $V_A$ of $K$ such that
\begin{equation}
\label{eq:V_A}
V_A\cap X=O_A=\Int_X(A),
\ \mbox{ and consequently, }
\
V_A\cap Z=C_A.
\end{equation}

\begin{claim}
The family
\begin{equation}
\label{family:C}
\mathcal{C}=\{C_A:A\in \bigcup_{n\in\N}\A_n\}.
\end{equation}
is a base of $Z$ consisting of non-empty clopen subsets of $Z$; in particular,
$Z$ is zero-dimensional.
\end{claim}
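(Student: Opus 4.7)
My plan is to verify the three assertions of the Claim separately: non-emptiness, openness, closedness in $Z$, and the base property, drawing in each case on exactly one of the inductive properties (i$_n$)--(iv$_n$).

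First, for non-emptiness I would argue that $Z$ is dense in $X$: each $W_n$ is open and dense by (iv$_n$), and $X$ is Baire because it is \Tp, so $Z = \bigcap_n W_n$ is dense. For any $A \in \bigcup_n \A_n$, membership $A \in \B_n$ together with the pseudobase condition forces $\Int_X(A) \neq \emptyset$, and density of $Z$ then yields $C_A = \Int_X(A) \cap Z \neq \emptyset$. Openness of $C_A$ in $Z$ is immediate from $C_A = V_A \cap Z$ with $V_A$ open in $K$ (alternatively from $\Int_X(A)$ being open in $X \supseteq Z$).

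The key step is closedness in $Z$. Fix $A \in \A_n$ and take any $z \in Z \setminus C_A$. Because $z \in Z \subseteq W_n = \bigcup \Int_X(\A_n)$, there exists $A' \in \A_n$ with $z \in \Int_X(A')$. If it were the case that $A' = A$ then $z \in \Int_X(A) \cap Z = C_A$, a contradiction, so $A' \neq A$. Property (i$_n$) says that $\Int_X(\A_n)$ is pairwise disjoint, so $\Int_X(A') \cap \Int_X(A) = \emptyset$, whence $\Int_X(A') \cap Z$ is an open neighborhood of $z$ in $Z$ disjoint from $C_A$. This shows $Z \setminus C_A$ is open in $Z$, so $C_A$ is closed in $Z$, hence clopen.

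Finally, for the base property, let $z \in Z$ and let $U$ be an open neighborhood of $z$ in $Z$, written as $U = U' \cap Z$ with $U'$ open in $X$. Choose $\varepsilon > 0$ with $B_X(z,\varepsilon) \subseteq U'$, and pick $n \in \N$ with $1/(n+1) < \varepsilon$. Since $z \in W_n$, property (iv$_n$) combined with (i$_n$) gives a (unique) $A \in \A_n$ with $z \in \Int_X(A)$; by (ii$_n$) we have $\mathrm{diam}(A) < 1/(n+1) < \varepsilon$, so $A \subseteq B_X(z,\varepsilon) \subseteq U'$. Therefore $z \in C_A = \Int_X(A) \cap Z \subseteq U' \cap Z = U$, which proves $\mathcal{C}$ is a base. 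Zero-dimensionality of $Z$ follows because $\mathcal{C}$ is a base of clopen sets. The main subtlety to watch is the closedness step, where one must invoke the disjointness of $\Int_X(\A_n)$ together with membership of $z$ in $W_n$; property (iii$_n$) is not needed here but will be crucial later when one shows that $\mathcal{C}$ is countably compact and that $Z$ is completely metrizable (via the usual Cantor-type argument on nested sequences).
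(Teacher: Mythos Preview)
Your proof is correct and follows essentially the same approach as the paper: the base argument via (ii$_n$) and the diameter bound is identical, and your closedness argument (finding $A'\in\A_n$ with $z\in\Int_X(A')$ and invoking the disjointness (i$_n$)) is just an unpacked version of the paper's one-line observation that $\Int_X(A)$ is clopen in $W_n=\bigcup\Int_X(\A_n)$ and $Z\subseteq W_n$. Your explicit verification of non-emptiness (via density of $Z$ and $\Int_X(A)\neq\emptyset$) is a detail the paper leaves implicit.
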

\proof
Let $V$ be an open subset of $Z$ and $z\in V$.
Choose an open subset $U$ of $X$ such that $U\cap Z=V$.
There exists $\varepsilon>0$ such that $B_X(z,\varepsilon)\subseteq U$.
Fix $n\in\N$ such that $1/(n+1)<\varepsilon$. 
Note that $x\in Z\subseteq W_n$ by \eqref{eq:Z}, so 
\eqref{eq:W_n} allows us to find $A\in\A_n$ such that 
$x\in \Int_X(A)$.
Now $\mathrm{diam}(\Int_X(A))\le\mathrm{diam}(A)\le \mathrm{diam}(\A_n)<1/(n+1)<\varepsilon$
by (ii$_n$).
Therefore, $x\in O_A=\Int_X(A)\subseteq B_X(x,\varepsilon)\subseteq U$.
Then
$x\in C_A=O_A\cap Z\subseteq U\cap Z=V$
by \eqref{eq:O_A:C_A}.
Since $C_A\in{\cal C}$ by \eqref{family:C}, this shows that ${\cal C}$ is a base of $Z$.

It remains only to note that all
members of ${\cal C}$ are 
clopen 
in $X$.
Indeed, let $n\in\N$ and $A\in\A_n$. Since $\Int_X(\A_n)$ is a family of pairwise disjoint open subsets of $X$ by (i$_n$),
$\Int_X(A)$ is a clopen subset of $W_n=\bigcup\Int_X(\A_n)$. 
Since $Z\subseteq W_n$ by \eqref{eq:Z}, the set $C_A=\Int_X(A)\cap Z$ is clopen in $Z$.
\endproof

\begin{claim}
\label{three:inclusions}
For $n\in \N$, $A_{n+1}\in\A_{n+1}$ and $A_n\in\A_n$, the following conditions are equivalent:
\begin{itemize}
\item[(i)] $\Cl_X({A_{n+1}})\subseteq \Int_X({A_n})$;
\item[(ii)] $C_{A_{n+1}}\subseteq C_{A_n}$;
\item[(iii)] $V_{A_{n+1}}\cap V_{A_n}\not=\emptyset$.
\end{itemize}
\end{claim}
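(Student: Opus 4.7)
The plan is to prove the equivalence via the cycle (i) $\Rightarrow$ (ii) $\Rightarrow$ (iii) $\Rightarrow$ (i), pulling the relevant facts straight from the construction (density of $Z$ in $X$, density of $X$ in $K$, the defining property \eqref{eq:V_A}, and the pairwise disjointness condition (i$_n$) together with the refinement condition (iii$_{n+1}$)).

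For (i) $\Rightarrow$ (ii), from $\Cl_X(A_{n+1}) \subseteq \Int_X(A_n)$ I immediately get $\Int_X(A_{n+1}) \subseteq \Int_X(A_n)$, i.e.\ $O_{A_{n+1}} \subseteq O_{A_n}$; intersecting with $Z$ yields $C_{A_{n+1}} \subseteq C_{A_n}$.

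For (ii) $\Rightarrow$ (iii), the point is that $C_{A_{n+1}}$ is nonempty: by Definition \ref{def:pi-pseudobase}(i) the set $\Int_X(A_{n+1})$ is a nonempty open subset of $X$, and since $Z$ is dense in $X$ (as shown in the construction via \eqref{eq:Z} and the Baire property), $C_{A_{n+1}} = \Int_X(A_{n+1}) \cap Z \neq \emptyset$. Now $C_{A_{n+1}} \subseteq C_{A_n}$ together with \eqref{eq:V_A} gives $\emptyset \neq C_{A_{n+1}} \subseteq V_{A_{n+1}} \cap V_{A_n}$.

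The main step is (iii) $\Rightarrow$ (i); here I exploit the ambient compactification $K$. Assume $V_{A_{n+1}} \cap V_{A_n} \neq \emptyset$. This intersection is open in $K$, and $X$ is dense in $K$, so $V_{A_{n+1}} \cap V_{A_n} \cap X \neq \emptyset$; applying \eqref{eq:V_A} gives $\Int_X(A_{n+1}) \cap \Int_X(A_n) \neq \emptyset$. By condition (iii$_{n+1}$), there is some $A' \in \A_n$ with $\Cl_X(A_{n+1}) \subseteq \Int_X(A')$, in particular $\Int_X(A_{n+1}) \subseteq \Int_X(A')$. Hence $\Int_X(A') \cap \Int_X(A_n) \neq \emptyset$, and since $\Int_X(\A_n)$ is pairwise disjoint by (i$_n$), we must have $A' = A_n$. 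Substituting back yields $\Cl_X(A_{n+1}) \subseteq \Int_X(A_n)$, which is (i).

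The only subtle step is (iii) $\Rightarrow$ (i), and the subtlety is not computational but conceptual: one has to realize that the auxiliary compactification $K$ and the open sets $V_A$ are there precisely to transfer the ``nonempty intersection'' from $K$ back to $X$, after which the combinatorics of the construction (the disjointness (i$_n$) together with the nesting (iii$_{n+1}$)) force the equality $A' = A_n$. The other two implications are direct applications of the definitions in \eqref{eq:O_A:C_A} and \eqref{eq:V_A}.
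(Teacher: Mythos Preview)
Your proof is correct and follows essentially the same cycle (i)$\Rightarrow$(ii)$\Rightarrow$(iii)$\Rightarrow$(i) as the paper, using the same ingredients: the definitions \eqref{eq:O_A:C_A} and \eqref{eq:V_A}, density of $X$ in $K$, and the combinatorial conditions (i$_n$) and (iii$_{n+1}$). The only cosmetic difference is that in (ii)$\Rightarrow$(iii) you explicitly argue $C_{A_{n+1}}\neq\emptyset$ via density of $Z$, whereas the paper simply asserts this (it was already established in Claim~1 that all elements of $\mathcal{C}$ are nonempty).
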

\begin{proof}
(i)$\Rightarrow$(ii) Since $\Int_X({A_{n+1}})\subseteq \Cl_X({A_{n+1}})\subseteq \Int_X({A_n})$ by (i),
$C_{A_{n+1}}=\Int_X({A_{n+1}})\cap Z\subseteq \Int_X({A_n})\cap Z=C_{A_n}$
by \eqref{eq:O_A:C_A}.

(ii)$\Rightarrow$(iii) It follows from (ii) that
$\emptyset\not=C_{A_{n+1}}=C_{A_{n+1}}\cap C_{A_n}=(V_{A_{n+1}}\cap Z)\cap (V_{A_n}\cap Z)=V_{A_{n+1}}\cap V_{A_n}\cap Z\subseteq V_{A_{n+1}}\cap V_{A_n}$
by \eqref{eq:V_A}, which yields (iii).

(iii)$\Rightarrow$(i) 
Since 
$X$ 
is dense in $K$ 
and the set
$V_{A_{n+1}}\cap  V_{A_n}\not=\emptyset$
is open in $K$,
from \eqref{eq:V_A} we have 
\begin{equation}
\label{O:O'}
\emptyset\not=X\cap V_{A_{n+1}}\cap V_{A_n}
=
(X\cap V_{A_{n+1}})\cap  (X\cap V_{A_n})
=
O_{A_{n+1}}\cap  O_{A_n}
=
\Int_X(A_{n+1})\cap  \Int_X(A_n).
\end{equation}
By (iii$_{n+1}$), 
$\Cl_X(A_{n+1})\subseteq \Int_X(A')$ for some $A'\in \A_n$.
Combining this with \eqref{O:O'}, we conclude 
that $\Int_X(A')\cap \Int_X(A_n)\not=\emptyset$.
Since $A', A_n\in \A_n$, from (i$_n$) we get
$A'=A_n$.
This proves that 
$
\Cl_X(A_{n+1})\subseteq \Int_X(A_n).
$
\end{proof}

\begin{claim}
\label{two:inclusions}
If 
$\{A_n:n\in\N\}$ is a nested sequence such that
$A_n\in\A_n$ for every $n\in\N$,
then $\bigcap_{n\in\N} C_{A_n}\not=\emptyset$
and
$\bigcap_{n\in\N} V_{A_n}\subseteq Z$.
\end{claim}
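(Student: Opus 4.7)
The plan for the first conclusion is to extract a witness from the completeness of $\{\B_n:n\in\N\}$ and trace it into $Z$. Since $\A_n\subseteq\B_n$ for every $n$, Definition~\ref{def:nested:and:complete:sequences}(i) applied to the nested sequence $\{A_n:n\in\N\}$ yields $\bigcap_{n\in\N}A_n\neq\emptyset$, and by Remark~\ref{rem:nested} this set coincides with $\bigcap_{n\in\N}\Int_X(A_n)$. Fix any $x$ in this intersection. For each $n$ one has $x\in\Int_X(A_n)\subseteq W_n$ by \eqref{eq:W_n}, so $x\in\bigcap_n W_n=Z$ by \eqref{eq:Z}, and therefore $x\in\Int_X(A_n)\cap Z=C_{A_n}$ for every $n$ by \eqref{eq:O_A:C_A}. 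This gives $\bigcap_{n\in\N}C_{A_n}\neq\emptyset$.

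For the second conclusion I will show that the very same point $x$ is in fact the unique element of $\bigcap_n V_{A_n}$, which together with $x\in Z$ yields $\bigcap_n V_{A_n}\subseteq Z$. Take any $y\in\bigcap_n V_{A_n}$ and assume for contradiction that $y\neq x$. Since $K$ is compact Hausdorff (recall that all spaces are Tychonoff), choose disjoint open sets $U_x,U_y\subseteq K$ with $x\in U_x$ and $y\in U_y$. Pick $\varepsilon>0$ with $B_X(x,\varepsilon)\subseteq U_x\cap X$, and then $n$ with $1/(n+1)<\varepsilon$. Condition (ii$_n$) gives $\mathrm{diam}(A_n)<1/(n+1)<\varepsilon$, and since $x\in A_n$ one obtains $A_n\subseteq B_X(x,\varepsilon)\subseteq U_x$; hence by \eqref{eq:V_A}, $V_{A_n}\cap X=\Int_X(A_n)\subseteq U_x$, a set disjoint from $U_y$. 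Consequently $V_{A_n}\cap U_y$ is open in $K$ and misses $X$, so by density of $X$ in $K$ it must be empty, contradicting $y\in V_{A_n}\cap U_y$.

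The delicate step is the second one. The extensions $V_A$ were chosen only to satisfy $V_A\cap X=\Int_X(A)$, with no a priori control over their traces in $K\setminus X$, so one has to genuinely rule out ``phantom'' points of $\bigcap_n V_{A_n}$ lying outside $X$. The diameter-shrinking condition (ii$_n$), together with Hausdorff separation in $K$ and the density of $X$ in $K$, are exactly the ingredients that force every such $y$ to coincide with the limit point $x$ of the nested sequence $\{A_n\}$.
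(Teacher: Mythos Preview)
Your proof is correct and follows essentially the same approach as the paper's. The first part is identical; for the second part the paper separates $x$ and $y$ by choosing an open $U\subseteq K$ with $x\in U$ and $y\notin\Cl_K U$ and then argues $\Cl_K(V_{A_n})=\Cl_K(O_{A_n})\subseteq\Cl_K(U)$, whereas you use disjoint open neighbourhoods and the density of $X$ to conclude $V_{A_n}\cap U_y=\emptyset$ --- but these are interchangeable variants of the same separation-plus-density idea.
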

\begin{proof}
The sequence $\{A_n:n\in\N\}$ is nested 
by our assumption. Since $A_n\in\A_n\subseteq \B_n$ and $\{\B_n:n\in\N\}$ is a complete sequence, 
$
\bigcap_{n\in\N} A_n\not=\emptyset
$
by Definition \ref{def:nested:and:complete:sequences}.
Since $O_{A_n}=\Int_X(A_n)$ by \eqref{eq:O_A:C_A}, 
from 
Remark \ref{rem:nested}
we conclude that 
$\bigcap_{n\in \N }O_{A_n}=\bigcap_{n\in\N} A_n$.
This allows us to
fix $x\in \bigcap_{n\in \N }O_{A_n}$.
Note that  $O_{A_n}\in\Int_X (\A_n)$ and \eqref{eq:W_n} imply $O_{A_n}\subseteq W_n$ for every $n\in\N$, so \eqref{eq:Z} gives
$x\in \bigcap_{n\in\N} W_n=Z$.
Now 
$x\in (\bigcap_{n\in \N }O_{A_n})\cap Z=
\bigcap_{n\in \N }(O_{A_n}\cap Z)
=
\bigcap_{n\in \N }C_{A_n}\not=\emptyset$ by \eqref{eq:O_A:C_A}.

Let us
to prove that $\bigcap_{n\in\N} V_{A_n}\subseteq Z$.
Let $y\in  \bigcap_{n\in\N} V_{A_n}$ be arbitrary.
Since $x\in Z$, it suffices to show that
$x=y$.
Assume that $x\not=y$.
Then there exists an open subset $U$ of $K$ such that
$x\in U$ and $y\not\in\Cl_{K} U$.
Then $U\cap X$ is an open neighbourhood of $x$ in $X$,
so 
there exists $\varepsilon>0$ such that $B_X(x,\varepsilon)\subseteq U$.
Since (ii$_n$) holds for every $n\in\N$,
there exists 
$n \in\N$ such that $\mathrm{diam} (\A_n)<\varepsilon$.
Since $x\in O_{A_n}\subseteq A_n\in \A_n$,
we have  $\mathrm{diam} (O_{A_n})\le \mathrm{diam} (A_n)<\varepsilon$, so
$O_{A_n}\subseteq B_X(x,\varepsilon)\subseteq U$. 
Since $X$ is dense in $K$ and $V_{A_n}$ is open in $K$,
we get 
$
\Cl_{K} (V_{A_n})
=
\Cl_{K} (V_{A_n}\cap X)
=
\Cl_{K} (O_{A_n})
\subseteq
\Cl_{K}(U).
$
Since $y\not\in \Cl_{K}(U)$,
this implies 
$y\not\in \Cl_{K} (V_{A_n})$,
in contradiction with $y\in O_{A_n}\subseteq V_{A_n}
\subseteq \Cl_{K}(V_{A_n})$.
\end{proof}

\begin{claim}
\label{non-empty:intersections}
If $A_n\in\A_n$ for every $n\in\N$ and 
$C_{A_0}\supseteq C_{A_1}\supseteq\ldots\supseteq C_{A_n}\supseteq C_{A_{n+1}}\supseteq\ldots$,
then 
$\bigcap_{n\in\N} C_{A_n}\not=\emptyset$.
\end{claim}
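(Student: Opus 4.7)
The plan is to reduce the claim directly to Claim \ref{two:inclusions} by invoking Claim \ref{three:inclusions} term-by-term. The hypothesis $C_{A_{n+1}} \subseteq C_{A_n}$ is precisely condition (ii) of Claim \ref{three:inclusions} with the indices $n$ and $n+1$. Applying the implication (ii)$\Rightarrow$(i) of that claim for every $n \in \N$, I obtain $\Cl_X(A_{n+1}) \subseteq \Int_X(A_n)$ for all $n$, which by Definition \ref{def:nested} is exactly the statement that the sequence $\{A_n : n \in \N\}$ is nested in $X$.

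Once nestedness has been secured, the first conclusion of Claim \ref{two:inclusions}, combined with the fact that $A_n \in \A_n$ for every $n$, immediately yields $\bigcap_{n\in\N} C_{A_n} \neq \emptyset$, which is what we wanted to prove. No further machinery (in particular, nothing about the $V_{A_n}$) is needed.

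I do not expect any serious obstacle. The only small verification is that the equivalence in Claim \ref{three:inclusions} is genuinely applicable, which requires $C_{A_{n+1}} \neq \emptyset$ so that the proof of (ii)$\Rightarrow$(iii) in that claim goes through. But this is automatic: since $A_n \in \A_n \subseteq \B_n$ and $\B_n$ is a pseudobase, $\Int_X(A_n) \neq \emptyset$; since $Z = \bigcap_{n\in\N} W_n$ is dense in $X$ (the $W_n$ are open and dense, and $X$ is Baire), the set $C_{A_n} = \Int_X(A_n) \cap Z$ is non-empty for every $n$. Hence the present claim is essentially a bookkeeping consequence of the two preceding claims.
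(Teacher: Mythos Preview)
Your argument is correct and is essentially identical to the paper's proof: apply Claim~\ref{three:inclusions} to pass from the chain of $C_{A_n}$'s to nestedness of $\{A_n:n\in\N\}$, then invoke Claim~\ref{two:inclusions}. Your extra care in checking $C_{A_{n+1}}\neq\emptyset$ is fine but already covered by Claim~1, which established that every $C_A$ is a non-empty clopen subset of $Z$.
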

\begin{proof}
It follows from our assumption, Definition \ref{def:nested} and 
Claim \ref{three:inclusions} that the sequence $\{A_n:n\in\N\}$ is nested, so 
$\bigcap_{n\in\N} C_{A_n}\not=\emptyset$
by Claim \ref{two:inclusions}.
\end{proof}

\begin{claim}
\label{cl:3}
Suppose that $n_0,n_1\in\N$ and $A_{n_i}\in \A_{n_i}$ for $i=0,1$. 
If $C_{A_{n_1}}\subsetneqq C_{A_{n_0}}$, then:
\begin{itemize}
\item[(i)]  $n_0<n_1$;
\item[(ii)] there exists $A_m\in\A_m$ for every $m\in\N$ satisfying
$n_0<m<n_1$ such that $C_{A_{n_0}}\supseteq C_{A_{n_0+1}}\supseteq 
\ldots\supseteq C_{A_{n_1-1}}\supseteq C_{A_{n_1}}$.
\end{itemize}
\end{claim}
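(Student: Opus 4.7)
The plan is to prove both parts by tracing through the nested structure supplied by condition (iii$_n$) from the construction of $\{\A_n:n\in\N\}$ and exploiting the pairwise disjointness from condition (i$_n$). Throughout, I shall freely use that $C_A\neq\emptyset$ for every $A\in \bigcup_{n\in\N}\A_n$, as asserted by the first claim in this proof.

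For part (i), I argue by contradiction, assuming $n_0\geq n_1$. Since $C_{A_{n_1}}\subseteq C_{A_{n_0}}$ and $C_{A_{n_1}}\neq\emptyset$, the open sets $\Int_X(A_{n_0})$ and $\Int_X(A_{n_1})$ have non-empty intersection. If $n_0=n_1$, then (i$_{n_0}$) forces $A_{n_0}=A_{n_1}$ and hence $C_{A_{n_0}}=C_{A_{n_1}}$, contradicting the strict inclusion. If $n_0>n_1$, then iterating (iii$_n$) backward $n_0-n_1$ times starting from $A_{n_0}$ produces some $A'_{n_1}\in\A_{n_1}$ with $\Cl_X(A_{n_0})\subseteq \Int_X(A'_{n_1})$. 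Then
\[
\Int_X(A'_{n_1})\cap \Int_X(A_{n_1}) \supseteq \Int_X(A_{n_0})\cap \Int_X(A_{n_1})\not=\emptyset,
\]
so (i$_{n_1}$) gives $A'_{n_1}=A_{n_1}$. Hence $\Int_X(A_{n_0})\subseteq \Int_X(A_{n_1})$, which yields $C_{A_{n_0}}\subseteq C_{A_{n_1}}$ and contradicts the strict inclusion.

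For part (ii), knowing now that $n_0<n_1$, I apply (iii$_n$) iteratively starting at $A_{n_1}$ to produce $A_{n_1-1},\ldots,A_{n_0+1}$ with $A_m\in\A_m$ and $\Cl_X(A_{m+1})\subseteq \Int_X(A_m)$ for every $m$ with $n_0<m<n_1$. One further application of (iii$_{n_0+1}$) yields some $A'_{n_0}\in\A_{n_0}$ with $\Cl_X(A_{n_0+1})\subseteq \Int_X(A'_{n_0})$. To close the chain at the prescribed $A_{n_0}$, I repeat the trapping argument from part (i): any point of $C_{A_{n_1}}$ lies in $\Int_X(A_{n_0})$ (by the hypothesis $C_{A_{n_1}}\subseteq C_{A_{n_0}}$) and in $\Int_X(A'_{n_0})$ (since $\Int_X(A_{n_1})\subseteq \Int_X(A_{n_0+1})\subseteq \Int_X(A'_{n_0})$ by the just-built chain), so (i$_{n_0}$) forces $A'_{n_0}=A_{n_0}$. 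Applying Claim \ref{three:inclusions} to each consecutive pair $(A_{m+1},A_m)$ then delivers the desired chain $C_{A_{n_0}}\supseteq C_{A_{n_0+1}}\supseteq \ldots \supseteq C_{A_{n_1}}$.

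The main obstacle is to verify at each matching step that the ``predecessor'' produced by (iii$_n$) coincides with the prescribed element at that level, rather than with some competitor in the same family $\A_{n_0}$ or $\A_{n_1}$; this is systematically resolved by (i$_n$), which converts a non-empty interior intersection into an equality of labels, provided one has a witness point. That witness is supplied precisely by the non-emptiness of $C_{A_{n_1}}$, and for this reason the a priori non-emptiness of every $C_A$ (guaranteed by the first claim in the proof) is essential to both parts.
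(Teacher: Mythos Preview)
Your proof is correct and follows essentially the same approach as the paper's: both parts rely on iterating condition (iii$_n$) to reach the right level and then using the pairwise disjointness (i$_n$), with a point of $C_{A_{n_1}}$ as the witness, to force the identification. The only cosmetic difference is in closing the final link of part (ii): the paper observes $\emptyset\neq C_{A_{n_1}}\subseteq V_{A_{n_0}}\cap V_{A_{n_0+1}}$ and invokes the implication (iii)$\Rightarrow$(ii) of Claim~\ref{three:inclusions} directly, whereas you push one extra step to produce $A'_{n_0}$ and then use (i$_{n_0}$) to identify $A'_{n_0}=A_{n_0}$ before applying (i)$\Rightarrow$(ii) of that claim.
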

\begin{proof}
(i) 
First, note that $n_0\not=n_1$. Indeed, assume that $n_0=n_1=n$.
Then $A_{n_0},A_{n_1}\in\A_n$.
Since $\Int_X(A_{n_0})\cap Z=C_{n_0}\not=C_{n_1}=\Int_X(A_{n_1})\cap Z$ by 
\eqref{eq:O_A:C_A},
we get $A_{n_0}\not=A_{n_1}$, and so $\Int_X(A_{n_0})\cap\Int_X(A_{n_1})=\emptyset$ by (i$_n$).
Now 
$C_{n_0}\cap C_{n_1}\subseteq \Int_X(A_{n_0})\cap\Int_X(A_{n_1})=\emptyset$.
On the other hand, $\emptyset\not= C_{A_{n_1}}= C_{A_{n_0}}\cap C_{A_{n_1}}$, as $C_{A_{n_1}}\subseteq C_{A_{n_0}}$.
This contradiction shows that $n_0\not=n_1$.

Suppose now that $n_1<n_0$. Since (iii$_{n_0}$), (iii$_{n_0-1}$)$,\ldots$, 
(iii$_{n_1-1}$) hold, there exists $A'\in \A_{n_1}$ such that
$\Cl_X(A_{n_0})\subseteq \Int_X(A')$. 
Then 
\begin{equation}
\label{included:interiors}
\Int_X(A_{n_0})\subseteq \Cl_X(A_{n_0})\subseteq\Int_X(A').
\end{equation} 
On the other hand,
$\emptyset\not=\Int_X(A_{n_1})\cap Z=C_{A_{n_1}}\subseteq C_{A_{n_0}}=\Int_X(A_{n_0})\cap Z
\subseteq \Int_X(A_{n_0})$.
This shows that $\Int_X(A_{n_1})\cap \Int_X(A')\not=\emptyset$.
Since $A_{n_1},A'\in \A_{n_1}$ and $\Int_X(\A_{n_1})$ is pairwise disjoint by 
(i$_{n_1}$), 
this implies the equality $A_{n_1}=A'$.
Recalling \eqref{included:interiors}, we get
$C_{A_{n_0}}=\Int_X(A_{n_0})\cap Z\subseteq \Int_X(A_{n_1})\cap Z=C_{A_{n_1}}$.
Since the converse inclusion $C_{A_{n_1}}\subseteq C_{A_{n_0}}$ holds by our assumption,
we conclude that $C_{A_{n_0}}=C_{A_{n_1}}$, in contradiction with $C_{A_{n_0}}\not=C_{A_{n_1}}$.

(ii) We shall construct the required sequence $\{A_m:m\in\N, n_0<m<n_1\}$ by a (finite) reverse induction on $m$.
Assume that $n_0<m\le n_1$ and $A_m\in\A_m$ has already been chosen in such a way that $C_{A_{m}}\subseteq C_{A_{m+1}}$ (if $m\not=n_1$).
By (iii$_m$), there exists $A_{m-1}\in\A_{m-1}$ such that 
$\Cl_X(A_{m-1})\subseteq \Int_X(A_m)$.
Now $C_{A_m}\supseteq C_{A_{m-1}}$ by Claim \ref{three:inclusions}.

We have defined sets $A_{n_0+1}\in \A_{n_0+1}$, $A_{n_0+2}\in \A_{n_0+2},
\ldots, A_{n_1-1}\in \A_{n_1-1}$
such that
\begin{equation}
\label{chain:of:Cs}
C_{A_{n_0+1}}\supseteq C_{A_{n_0+2}}\supseteq\ldots\supseteq C_{A_{n_1-1}}
\supseteq 
C_{A_{n_1}}.
\end{equation}
Since $C_{A_{n_0}}\supseteq C_{A_{n_1}}$ by our assumption,
$\emptyset\not=C_{A_{n_1}}\subseteq C_{A_{n_0}}\cap C_{A_{n_0+1}}
\subseteq V_{A_{n_0}}\cap V_{A_{n_0+1}}$
by \eqref{eq:V_A} and \eqref{chain:of:Cs}, so $C_{A_{n_0}}\supseteq C_{A_{n_0+1}}$
by Claim \ref{three:inclusions}.
\end{proof}

\begin{claim}
The family ${\cal C}$ is \cc.
\end{claim}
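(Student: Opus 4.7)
The plan is to reduce an arbitrary decreasing sequence in $\mathcal{C}$ to a chain of the form appearing in Claim \ref{non-empty:intersections}, whose intersection is then automatic. Fix a decreasing sequence $\{C_{B_k}:k\in\N\}\subseteq \mathcal{C}$, where each $B_k\in \A_{m_k}$ for some $m_k\in\N$; the goal is to show $\bigcap_{k\in\N} C_{B_k}\ne\emptyset$.

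If the sequence stabilizes, say $C_{B_k}=C_{B_{k_0}}$ for all $k\ge k_0$, then the intersection equals $C_{B_{k_0}}=\Int_X(B_{k_0})\cap Z$, which is non-empty because $\B_{m_{k_0}}$ is a pseudobase (so $\Int_X(B_{k_0})\ne\emptyset$) and $Z$ is dense in the Baire space $X$ as the intersection of the dense open sets $W_n$. Otherwise, pass to a strictly decreasing subsequence $\{C_{B_{k_j}}:j\in\N\}$; Claim \ref{cl:3}(i) then forces $m_{k_0}<m_{k_1}<\cdots$, so in particular $m_{k_j}\to\infty$.

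The main step is to splice this subsequence into a full decreasing chain $\{C_{A_p'}:p\in\N\}$ with $A_p'\in\A_p$ for every $p$. For each $j$, Claim \ref{cl:3}(ii) supplies $A_p'\in \A_p$ for $m_{k_j}<p<m_{k_{j+1}}$ filling the gap between $C_{B_{k_j}}$ and $C_{B_{k_{j+1}}}$; declaring $A_{m_{k_j}}':=B_{k_j}$ glues the pieces together. To cover the indices $p<m_{k_0}$, I would iterate property (iii$_m$) downward from $A_{m_{k_0}}':=B_{k_0}$, producing $A_p'\in\A_p$ with $\Cl_X(A_{p+1}')\subseteq \Int_X(A_p')$, which by the implication (i)$\Rightarrow$(ii) of Claim \ref{three:inclusions} gives $C_{A_{p+1}'}\subseteq C_{A_p'}$. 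Claim \ref{non-empty:intersections} then guarantees $\bigcap_{p\in\N} C_{A_p'}\ne\emptyset$, and since $m_{k_j}\to\infty$ and the constructed chain is decreasing, this intersection coincides with $\bigcap_{j\in\N}C_{B_{k_j}}=\bigcap_{k\in\N}C_{B_k}$.

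The main (though modest) obstacle is the bookkeeping: one must verify that the gap-filling from Claim \ref{cl:3}(ii) on consecutive pairs $(m_{k_j},m_{k_{j+1}})$ together with the downward extension via (iii$_m$) really fits into a single descending chain indexed by all of $\N$, with no overlaps or inconsistencies at the join points $p=m_{k_j}$. Once that splicing is in place, Claim \ref{non-empty:intersections} delivers the conclusion immediately, and no further appeal to the completeness of $\{\B_n\}$ or to the compact ambient $K$ is needed.
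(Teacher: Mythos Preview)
Your proposal is correct and follows essentially the same route as the paper: reduce to a strictly decreasing sequence, use Claim \ref{cl:3}(i) to get strictly increasing levels, then use Claim \ref{cl:3}(ii) to interpolate between consecutive levels, and finally invoke Claim \ref{non-empty:intersections}. In fact your treatment is slightly more explicit than the paper's at one point: the paper simply asserts that the sequence $\{A_{n_k}\}$ can be enlarged to a full sequence $\{A_n:n\in\N\}$, whereas you spell out the downward extension below the first level $m_{k_0}$ via property (iii$_m$) and Claim \ref{three:inclusions}.
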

\begin{proof}
Let $\{D_m:m\in\N\}\subseteq {\cal C}$ be a decreasing sequence.
We consider two cases.

{\em Case 1\/}. {\sl There exists $l\in\N$ such that
 $D_i=D_l$ for all integers $i\ge l$.\/}
 Then $\bigcap_{m\in\N} D_m=D_l\not=\emptyset$.
 
{\em Case 2\/}. {\sl There exists an increasing sequence
$\{m_k:k\in\N\}$ of integers such that 
$D_{m_k}=D_{m_k+1}=\ldots=D_{m_{k+1}-1}$ and $D_{m_k}\not=D_{m_{k+1}}$ for all $k\in\N$\/}. Since $\{D_k:k\in\N\}\subseteq {\cal C}$ is a decreasing sequence,
$\bigcap_{m\in\N} D_m=\bigcap_{k\in\N} D_{m_k}$. Therefore, by considering 
the subsequence $\{D_{m_k}:k\in\N\}$ of the sequence $\{D_m:m\in\N\}$, 
we may assume, without loss of generality, that the sequence $\{D_m:m\in\N\}$ itself is strictly decreasing, i.e.,
$D_{m+1}\not=D_m$ for all $m\in\N$.

For every $k\in\N$, 
from $D_k\in{\cal C}$ and \eqref{family:C}, we conclude that
$D_k=C_{A_{n_k}}$ for some $n_k\in\N$ and $A_{n_k}\in \A_{n_k}$.
By Claim \ref{cl:3}(i), the sequence $\{n_k:k\in\N\}$ is strictly increasing. 
By Claim \ref{cl:3}(ii), the sequence $\{A_{n_k}:k\in\N\}$ can be enlarged
to a sequence $\{A_n:n\in\N\}$ in such a way that the corresponding sequence
$\{C_{A_n}:n\in\N\}$ is (not necessarily strictly) decreasing, $\{A_{n_k}:k\in\N\}$ becomes a subsequence of the sequence $\{A_n:n\in\N\}$, and $A_n\in\A_n$ for all $n\in\N$.
Applying Claim \ref{non-empty:intersections},
we conclude that
$\bigcap_{n\in\N}C_{A_n}\not=\emptyset$.
Since 
$C_{A_{n_k}}=D_k$ for every $k\in\N$ and the sequence 
$\{C_{A_n}:n\in\N\}$ is decreasing, it follows that
$\bigcap_{k\in\N} D_k= \bigcap_{k\in\N} C_{A_{n_k}}=\bigcap_{n\in\N}C_{A_n}\not=\emptyset$.
\end{proof}

\begin{claim}
$Z$ is \v{C}ech-complete. 
\end{claim}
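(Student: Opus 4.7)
The plan is to realize $Z$ as a $G_\delta$ subset of the compact space $K$ used in the construction above; then $Z$ will be $G_\delta$ in its compactification $\Cl_K(Z)$ and therefore \v{C}ech-complete. For each $n\in\N$, set
\[
U_n=\bigcup_{A\in\A_n} V_A,
\]
which is open in $K$. The goal is to verify the equality $Z=\bigcap_{n\in\N} U_n$.

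The inclusion $Z\subseteq\bigcap_{n\in\N} U_n$ is immediate: by \eqref{eq:Z} and \eqref{eq:W_n}, every $z\in Z$ lies in $W_n=\bigcup_{A\in\A_n}\Int_X(A)=\bigcup_{A\in\A_n} O_A$, and \eqref{eq:V_A} yields $O_A\subseteq V_A$, so $z\in U_n$.

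For the reverse inclusion, I first observe that for each fixed $n$ the family $\{V_A:A\in\A_n\}$ is pairwise disjoint in $K$. Indeed, if $V_A\cap V_{A'}\neq\emptyset$ for some $A,A'\in\A_n$, then this open subset of $K$ meets the dense set $X$, giving $O_A\cap O_{A'}=\Int_X(A)\cap\Int_X(A')\neq\emptyset$ by \eqref{eq:V_A}, which forces $A=A'$ by condition (i$_n$). Consequently, for every $y\in\bigcap_{n\in\N} U_n$ and every $n\in\N$, there is a \emph{unique} $A_n\in\A_n$ with $y\in V_{A_n}$. Since $y\in V_{A_n}\cap V_{A_{n+1}}\neq\emptyset$ for all $n$, Claim \ref{three:inclusions} (the implication (iii)$\Rightarrow$(i)) gives $\Cl_X(A_{n+1})\subseteq\Int_X(A_n)$, so $\{A_n:n\in\N\}$ is a nested sequence with $A_n\in\A_n$. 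Claim \ref{two:inclusions} then delivers $y\in\bigcap_{n\in\N} V_{A_n}\subseteq Z$, which completes the proof of $Z=\bigcap_{n\in\N} U_n$.

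The main obstacle, and the conceptual heart of the argument, is forcing the sequence $\{A_n\}$ attached to an arbitrary $y\in\bigcap_n U_n$ to be nested. This is precisely where the geometric content of the construction is exploited: pairwise disjointness of $\{V_A:A\in\A_n\}$ (coming from (i$_n$) together with density of $X$ in $K$) upgrades existence of $A_n$ to uniqueness, while the non-trivial equivalence in Claim \ref{three:inclusions} converts the tautological condition $y\in V_{A_{n+1}}\cap V_{A_n}$ into the nestedness relation $\Cl_X(A_{n+1})\subseteq\Int_X(A_n)$. Once the sequence is nested, Claim \ref{two:inclusions}—whose proof already bundled the convergence/uniqueness argument in $K$—closes out the inclusion, and the representation of $Z$ as a countable intersection of open sets in $K$ finishes the claim.
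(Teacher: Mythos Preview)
Your proof is correct and follows essentially the same route as the paper's: define the open sets $U_n=\bigcup_{A\in\A_n}V_A$ in $K$ (the paper calls them $V_n$), verify $Z\subseteq\bigcap_n U_n$ directly, and for the reverse inclusion pick $A_n\in\A_n$ with $y\in V_{A_n}$, invoke Claim~\ref{three:inclusions} (iii)$\Rightarrow$(i) to get nestedness, and finish with Claim~\ref{two:inclusions}. The only difference is that you insert an extra (correct) observation that the family $\{V_A:A\in\A_n\}$ is pairwise disjoint, giving uniqueness of $A_n$; the paper simply \emph{chooses} some $A_n$ and proceeds, since uniqueness is not actually needed for the argument to go through.
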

\begin{proof}
Let $n\in\N$. Since $V_A$ is an open subset of $K$ for every $A\in\A_n$, the set 
\begin{equation}
\label{eq:V_n}
V_n=\bigcup_{A\in\A_n} V_A
\end{equation}
is open in $K$.
Therefore, 
$\bigcap_{n\in\N} V_n$
is a $G_\delta$-subset of the compact space $K$, so it is \v{C}ech-complete.
Therefore, it suffices to show that 
$Z=\bigcap_{n\in\N} V_n$.

Let $z\in Z$ be arbitrary. For each $n\in \N$, we can use  \eqref{eq:W_n} and \eqref{eq:Z} to fix 
$A_n\in\A_n$ such that 
$z\in O_{A_n}=\Int_X(A_n)\in \Int_X (\A_n)$.
Note that
$O_{A_n}=V_{A_n}\cap Z\subseteq V_{A_n}$.
Since $A_n\in\A_n$, we have $V_{A_n}\subseteq V_n$ by 
\eqref{eq:V_n}. This shows that $z\in V_n$.
Since this inclusion holds for each $n\in\N$, we get
$z\in\bigcap_{n\in\N} V_n$. 
This establishes the inclusion
$Z\subseteq \bigcap_{n\in\N} V_n$. 

To check the inverse inclusion 
$\bigcap_{n\in\N} V_n\subseteq Z$, fix 
$y\in \bigcap_{n\in\N} V_n$.
We are going to show that $y\in Z$.
For each $n\in\N$ use $y\in V_n$ and \eqref{eq:V_n}
to choose $A_n\in\A_n$ such that 
$y\in V_{A_n}$.
Then $y\in V_{A_{n+1}}\cap V_{A_n}\not=\emptyset$, so 
$\Cl_X(A_{n+1})\subseteq \Int_X(A_n)$ by the implication 
(iii)$\Rightarrow$(i) of  
Claim \ref{three:inclusions}.
Therefore, the sequence $\{A_n:n\in\N\}$ is nested by 
Definition \ref{def:nested}.
Now $y\in\bigcap_{n\in\N} V_{A_n}\subseteq Z$ by Claim
\ref{two:inclusions}.
\end{proof}

\begin{lemma}
\label{extention:of:cc:families}
Let $Z$ be a subspace of a topological space $X$
and ${\cal C}$ a family of non-empty subsets of $Z$.
For every $C\in{\cal C}$ let $F_C$ be a subset of $X$ such that
$F_C\cap Z=C$. If the family ${\cal C}$ is \cc, then so is the family ${\cal F}=\{F_C:C\in{\cal C}\}$.
\end{lemma}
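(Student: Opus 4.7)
The plan is to unwind the definition of \cc\ directly. Given a decreasing sequence $\{F_{C_n}:n\in\N\}\subseteq\cal F$, I first want to extract a decreasing sequence in $\cal C$ by intersecting with $Z$. Since $F_{C_{n+1}}\subseteq F_{C_n}$, intersecting both sides with $Z$ yields
$$
C_{n+1}=F_{C_{n+1}}\cap Z\subseteq F_{C_n}\cap Z=C_n,
$$
so $\{C_n:n\in\N\}$ is a decreasing sequence of members of $\cal C$. Note also that each $C_n$ is non-empty by hypothesis (the family $\cal C$ consists of non-empty subsets of $Z$), which is what allows the members of $\cal F$ to form a bona fide decreasing sequence worth testing.

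Next, I invoke the assumed countable compactness of $\cal C$ to conclude that $\bigcap_{n\in\N}C_n\neq\emptyset$. Picking any point $x$ in this intersection, I observe
$$
x\in\bigcap_{n\in\N}C_n=\bigcap_{n\in\N}(F_{C_n}\cap Z)\subseteq\bigcap_{n\in\N}F_{C_n},
$$
so $\bigcap_{n\in\N}F_{C_n}\neq\emptyset$, as required. This establishes that $\cal F$ is \cc.

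There is no real obstacle here: the lemma is essentially a trivial push-forward of the \cc\ property from $\cal C$ to $\cal F$ via the condition $F_C\cap Z=C$, which simultaneously forces decreasing sequences in $\cal F$ to induce decreasing sequences in $\cal C$ and embeds the resulting non-empty intersection in $Z$ back into the intersection in $X$. The only subtlety worth flagging is that we are not claiming any relationship between $F_C$ and $F_{C'}$ beyond what the specific decreasing sequence in $\cal F$ provides; in particular, the map $C\mapsto F_C$ need not be monotone, but along a fixed decreasing sequence it automatically becomes monotone on the $Z$-traces, and that is all we need.
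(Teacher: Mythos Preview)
Your proof is correct and follows essentially the same argument as the paper: intersect the decreasing sequence $\{F_{C_n}\}$ with $Z$ to obtain a decreasing sequence $\{C_n\}$ in $\cal C$, apply countable compactness of $\cal C$, and note that $\bigcap C_n\subseteq\bigcap F_{C_n}$.
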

\begin{proof}
Let $\{C_n:n\in\N\}\subseteq {\cal C}$ be a sequence such that $\{F_{C_n}:n\in\N\}\subseteq {\cal F}$ 
is a decreasing sequence; that is,
$F_{C_{n+1}}\subseteq F_{C_n}$ for every
$n\in N$. Then
$C_{n+1}=F_{C_{n+1}}\cap Z\subseteq  F_{C_n}\cap Z=C_n$
by the definition of $F_C$.
This means that 
$\{C_n:n\in\N\}$ is a decreasing sequence in $\cal C$. 
Since this family is \cc, 
$\emptyset \not= \bigcap \{C_n:n\in\N\}\subseteq \bigcap \{F_{C_n}:n\in\N\}$.
\end{proof}

\begin{lemma}
\label{dense_strongly_sanchez_okunev}
Let 
$Z$ be a 
a dense subspace of a topological space $X$ 
and let ${\cal C}$ be a \cc\ pseudobase of $Z$.
Then:
\begin{itemize}
\item[(i)]
if all members of ${\cal C}$ are open in 
$Z$, then 
${\cal U}=\{U_C: C\in{\cal C}\}$ 
is a \cc\ pseudobase of $X$ consisting of open subsets of $X$,
where $U_C$ is 
any
open subset of $X$ such that $U_C\cap Z=C$;
  \item[(ii)] if all members of ${\cal C}$ are closed in $Z$, then
  ${\cal F}=\{\Cl_X(C):C\in{\cal C}\}$ is a \cc\ pseudobase
  of $X$  consisting of closed subsets of $X$.  
\end{itemize}
\end{lemma}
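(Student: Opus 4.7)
The plan is to verify, in each part, the three defining conditions of a countably compact pseudobase of $X$: each element has non-empty interior, every non-empty open subset of $X$ contains some element of the family, and the family is countably compact. Countable compactness will follow in both cases from Lemma \ref{extention:of:cc:families}, non-empty interior is routine, and the real work is the pseudobase containment condition. The key tools are the regularity of $X$ (which holds by the paper's standing Tychonoff convention) together with the basic density observation that $U\subseteq \Cl_X(U\cap Z)$ for every open $U\subseteq X$.

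For part (i), each $U_C$ is non-empty and open, so $\Int_X(U_C)=U_C\neq\emptyset$. For the pseudobase condition, given a non-empty open $W\subseteq X$, use regularity of $X$ to produce a non-empty open $N\subseteq X$ with $\Cl_X(N)\subseteq W$. Density makes $N\cap Z$ a non-empty open subset of $Z$, so $\cal C$ supplies some $C\in\cal C$ with $C\subseteq N\cap Z$; then
\[
U_C\subseteq \Cl_X(U_C\cap Z)=\Cl_X(C)\subseteq \Cl_X(N)\subseteq W.
\]
Countable compactness of $\cal U$ follows from Lemma \ref{extention:of:cc:families} applied with $F_C=U_C$, since $U_C\cap Z=C$ by hypothesis.

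For part (ii), each $\Cl_X(C)$ is closed in $X$ and has non-empty interior: because $\Int_Z(C)$ is a non-empty open subset of $Z$, one can write $\Int_Z(C)=V\cap Z$ for some non-empty open $V\subseteq X$, and then density gives $V\subseteq \Cl_X(V\cap Z)\subseteq \Cl_X(C)$, so $V\subseteq \Int_X(\Cl_X(C))$. The pseudobase containment condition is obtained by the same regularity shrinking: given a non-empty open $W\subseteq X$, find $N$ with $\Cl_X(N)\subseteq W$, then $C\in\cal C$ with $C\subseteq N\cap Z$, and conclude $\Cl_X(C)\subseteq \Cl_X(N)\subseteq W$. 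Countable compactness follows from Lemma \ref{extention:of:cc:families} applied with $F_C=\Cl_X(C)$, using that $C$ is closed in $Z$ to get $\Cl_X(C)\cap Z=\Cl_Z(C)=C$.

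The only subtlety, common to both parts, is that the inclusion $C\subseteq W\cap Z$ by itself does not force $U_C\subseteq W$ or $\Cl_X(C)\subseteq W$, since those sets may pick up extra points from $X\setminus Z$; the regularity-based shrinking of $W$ to a smaller open set whose closure still lies in $W$ is exactly what absorbs this slack.
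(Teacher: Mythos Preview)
Your proof is correct and follows the same approach as the paper: countable compactness via Lemma \ref{extention:of:cc:families} in both parts, with the pseudobase verification (which the paper leaves to the reader as ``a straightforward check'') filled in explicitly. Your use of regularity to shrink $W$ is exactly the right way to handle the subtlety you identify, and the density inclusion $U\subseteq\Cl_X(U\cap Z)$ for open $U$ is applied correctly in both the containment step of part (i) and the non-empty-interior step of part (ii).
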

\begin{proof}
A straightforward check that 
${\cal U}$
is a pseudobase of $X$ 
is left to the reader.
Now
 ${\cal U}$ is \cc\ by Lemma \ref{extention:of:cc:families}.

(ii) 
A straightforward check that 
${\cal F}$
is a pseudobase of $X$ 
is left to the reader.
Note that $\Cl_X(C)\cap Z=\Cl_Z(C)=C$ for every $C\in{\cal C}$, 
so ${\cal F}$ is \cc\ by Lemma \ref{extention:of:cc:families}.
\end{proof}

\medskip
\noindent
{\bf Proof of Theorem \ref{coincidence:for:metric:spaces}:}
The implication (ix)$\Rightarrow$(x) follows from 
Theorem \ref{T:gives:dense:complete}.

(x)$\Rightarrow$(i)  and (x)$\Rightarrow$(ii). Suppose that $X$ contains 
a dense subspace $Z$ with a \cc\ base $\mathcal{C}$ 
consisting of clopen subsets of $Z$. 
By Lemma \ref{dense_strongly_sanchez_okunev}(i),
${\cal U}$ is  a \cc\ pseudobase of $X$ consisting of open subsets of $X$, so $X$ is Oxtoby \cc\ by Definition \ref{def:compact:properties}(ii).
By Lemma \ref{dense_strongly_sanchez_okunev}(ii),
${\cal F}$ is a \cc\ pseudobase
  of $X$  consisting of closed subsets of $X$.
Since $X$ is perfectly normal, every member of ${\cal F}$ is a zero-set in $X$; see \cite[1.5.19(iii)]{En}. 
Therefore, $X$ is S\'anchez-Okunev \cc\ by 
Definition  \ref{def:compact:properties}(iii).

The rest of the implications follows from Diagram 2. 
\endproof

\begin{lemma}
Let $G$ be a
topological group with the Baire property. If $X$ is a dense $G_\delta $-subspace of $G$ then $G=XX^{-1}$.
\end{lemma}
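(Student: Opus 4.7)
\medskip
\noindent
\textbf{Proof plan.} The plan is to show that for every $g\in G$ the set $X\cap g^{-1}X$ is non-empty, which immediately yields the conclusion: pick $y\in X\cap g^{-1}X$ and set $x=gy\in X$, so that $g=xy^{-1}\in XX^{-1}$. Hence the whole statement is reduced to proving $X\cap g^{-1}X\neq\emptyset$ for each $g\in G$.

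First I would observe that the left translation $\ell_{g^{-1}}:G\to G$ defined by $\ell_{g^{-1}}(h)=g^{-1}h$ is a homeomorphism of $G$ onto itself. Since $X$ is a dense $G_\delta$ in $G$, its image $g^{-1}X=\ell_{g^{-1}}(X)$ is also a dense $G_\delta$ in $G$. Write $X=\bigcap_{n\in\N}U_n$ and $g^{-1}X=\bigcap_{n\in\N}V_n$, where $U_n,V_n$ are open in $G$; each $U_n$ is dense because it contains the dense set $X$, and similarly each $V_n$ is dense.

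Next I would invoke the assumption that $G$ is a Baire space: the countable family $\{U_n:n\in\N\}\cup\{V_n:n\in\N\}$ consists of dense open subsets of $G$, so its intersection
\[
\left(\bigcap_{n\in\N}U_n\right)\cap\left(\bigcap_{n\in\N}V_n\right)=X\cap g^{-1}X
\]
is dense in $G$, and in particular non-empty (recall $G\neq\emptyset$, as otherwise the statement is vacuous). This finishes the argument.

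There is essentially no obstacle here: the only ingredients are the homogeneity of a topological group (to transport dense $G_\delta$ sets by translations), the standard fact that a dense $G_\delta$ is an intersection of dense open sets, and the very definition of the Baire property. I would expect the write-up to be a few lines.
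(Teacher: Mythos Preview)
Your proof is correct and follows essentially the same approach as the paper: translate $X$ by a group element to obtain a second dense $G_\delta$ set, use the Baire property to conclude that the two dense $G_\delta$ sets meet, and read off $g\in XX^{-1}$ from a point in the intersection. The only cosmetic difference is that the paper intersects $X$ with $gX$ rather than with $g^{-1}X$, and it does not explicitly unfold the $G_\delta$ sets into countable intersections of dense open sets.
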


\proof
Let $g\in G$ be arbitrary. 
Since
$X$ is a dense $G_\delta $-subset of $G$
and the map $x\mapsto gx$ ($x\in G$) is a homeomorphism of $G$ onto itself, $Y=gX$ is a dense $G_\delta $-subset of $G$.
Since $G$ has the Baire property,
$H=X\cap Y\not=\emptyset $, so
we can 
take $h\in H$.
There is $x\in X$ such that $h=gx$. Therefore, $g=hx^{-1}\in XX^{-1}$.  
\endproof

The following corollary is well known; see, for example, \cite[Lemma 4.2]{Tk} where it is mentioned without a proof.

\begin{corollary}\label{dense_cech}
If $H$ is a dense subgroup of a topological
group $G$ such that $H$ contains a dense 
\v{C}ech-complete subspace, then $H=G$. 
\end{corollary}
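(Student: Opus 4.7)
The plan is to apply the preceding lemma (that $G=XX^{-1}$ whenever $X$ is a dense $G_\delta$-subspace of a topological group $G$ with the Baire property). Let $X$ be a dense \v{C}ech-complete subspace of $H$, so $X\subseteq H\subseteq G$. Because $X$ is dense in $H$ and $H$ is dense in $G$, $X$ is dense in $G$, and since \v{C}ech-complete spaces are Baire and a space containing a dense Baire subspace is itself Baire, $G$ has the Baire property.

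Next I would verify that $X$ is a $G_\delta$-subset of $G$. Fix any compactification $K$ of $G$ (for instance $\beta G$). Since $X$ is dense in $G$ and $G$ is dense in $K$, the space $X$ is dense in the compactum $K$, so $K$ serves as a (Hausdorff) compactification of $X$ as well. \v{C}ech-completeness of $X$ then gives open sets $U_n\subseteq K$ with $X=\bigcap_{n\in\N}U_n$, and intersecting with $G$ shows $X=\bigcap_{n\in\N}(U_n\cap G)$ is a $G_\delta$-subset of $G$.

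With both hypotheses of the previous lemma verified, I conclude $G=XX^{-1}$. Since $X\subseteq H$ and $H$ is a subgroup of $G$, we have $XX^{-1}\subseteq HH^{-1}=H$, so $G\subseteq H$, which together with $H\subseteq G$ yields $H=G$. No step here looks like it will pose a real obstacle; the only thing to be slightly careful about is invoking the standard fact that \v{C}ech-completeness transfers a $G_\delta$-representation from $\beta X$ to any compactification in which $X$ is densely embedded, which is what makes Step~2 above work.
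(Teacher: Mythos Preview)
Your proof is correct and follows essentially the same route as the paper's: apply the preceding lemma to the dense \v{C}ech-complete subspace $X$ of $G$, using that $X$ is $G_\delta$ in $G$ and that $G$ inherits the Baire property from $X$, and then conclude $G=XX^{-1}\subseteq H$. The only difference is that you spell out explicitly why $X$ is a $G_\delta$-subset of $G$ via a compactification argument, whereas the paper simply asserts this as a standard consequence of \v{C}ech-completeness.
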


\proof
Let $X$ be a dense \v{C}ech-complete subspace of $H$. 
Since $H$ is dense in $G$, so $X$ is.
Since $X$ has the Baire property, $G$ also has it. 
Since $X$ is  \v{C}ech-complete, it is
a $G_\delta $-set in $G$. 
By the previous lemma, $G=XX^{-1}$. Since $H$ is a subgroup of $G$
and $X\subseteq H$, we have $G=XX^{-1}\subseteq H\subseteq G$, so $H=G$.
\endproof

\medskip
\noindent
{\bf Proof of Theorem \ref{metric:Todd:are:Cech-complete}:}
(x)$\Rightarrow$(xi) 
Let $G=\varrho X$ be the Raikov completion of $X$. Then $G$ is a metric Raikov complete topological group. 
By \cite[Proposition 4.3.8]{AT}, $G$ is \v{C}ech-complete. 
By Corollary 
\ref{dense_cech}, $X=G$. In particular, $X$ is \v{C}ech-complete.

By Lemma \ref{chec-SO}, (xi) implies (xii),  and it is clear that (xii) implies 
(v). 
The rest of implications follow from Theorem \ref{coincidence:for:metric:spaces}.
\endproof

\section{Technical lemmas}
\label{technical:lemmas}

The proof of the next lemma follows closely that of \cite[S. 470]{Tka}.
\begin{lemma}
\label{lemma:Tkacuk}
Let $\{\cal A_n:n\in\N\}$ be a complete sequence in a space $X$. Then there exists a 
sequence $\{\B_{n}:n\in\N\}$ of pseudobases of $X$ 
 such that:
\begin{itemize}
\item[(i)] 
$\B_n\subseteq {\cal A}_n$ for every $n\in\N$;
\item[(ii)] If $m\in\N$ and
 $\{B_{n}:n\geq m\}$ is a nested sequence in $X$ such that $B_{n}\in\cal B_{n}$ for each $n\geq m$,
then the intersection  $\bigcap \{B_{n}:n\geq m\}$ is non-empty. 
\end{itemize}
\end{lemma}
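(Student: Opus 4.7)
The plan is to define $\mathcal{B}_n$ by ``backward padding'' relative to $\mathcal{A}_n$: declare a set $B$ to lie in $\mathcal{B}_n$ precisely when $B\in\mathcal{A}_n$ and there exist $A_{0}\in\mathcal{A}_0, A_{1}\in\mathcal{A}_1,\ldots,A_{n-1}\in\mathcal{A}_{n-1}$ such that the finite sequence $\{A_0,A_1,\ldots,A_{n-1},B\}$ is nested in the sense of Definition \ref{def:nested}. Property (i) then holds by definition. Intuitively, $\mathcal{B}_n$ collects exactly those $n$-th level pseudobase members that can already be ``filled in from the left''; this is what will make a nested tail $\{B_n:n\geq m\}$ extendable to a full nested sequence indexed by $\N$.

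To verify that $\mathcal{B}_n$ is a pseudobase of $X$, fix a non-empty open set $U\subseteq X$. Since $\mathcal{A}_0$ is a pseudobase, there is $A_0\in\mathcal{A}_0$ with $A_0\subseteq U$ and $\Int_X(A_0)\neq\emptyset$. Using that $X$ is Tychonoff (hence regular), pick a non-empty open $V_1$ with $\Cl_X(V_1)\subseteq\Int_X(A_0)$, then use the pseudobase $\mathcal{A}_1$ to find $A_1\in\mathcal{A}_1$ with $A_1\subseteq V_1$; this yields $\Cl_X(A_1)\subseteq\Int_X(A_0)$ and $\Int_X(A_1)\neq\emptyset$. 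Iterating this shrinking $n$ times produces a nested finite sequence $\{A_0,A_1,\ldots,A_n\}$ with $A_k\in\mathcal{A}_k$ and $A_n\subseteq U$; hence $B:=A_n$ belongs to $\mathcal{B}_n$, has non-empty interior, and is contained in $U$, as required by Definition \ref{def:pi-pseudobase}.

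For (ii), let $m\in\N$ and let $\{B_n:n\geq m\}$ be a nested sequence with $B_n\in\mathcal{B}_n$. Applying the definition of $\mathcal{B}_m$ to $B_m$, choose $A_0\in\mathcal{A}_0,\ldots,A_{m-1}\in\mathcal{A}_{m-1}$ so that $\{A_0,\ldots,A_{m-1},B_m\}$ is nested. Concatenating these prefixes with the original tail gives a single sequence $\{C_n:n\in\N\}$, where $C_n=A_n$ for $n<m$ and $C_n=B_n$ for $n\geq m$; this concatenation is nested because the junction $\Cl_X(B_m)\subseteq\Int_X(A_{m-1})$ is provided by the prefix construction. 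Since $C_n\in\mathcal{A}_n$ for all $n\in\N$ and $\{\mathcal{A}_n:n\in\N\}$ is complete, Definition \ref{def:nested:and:complete:sequences}(i) gives $\bigcap_{n\in\N}C_n\neq\emptyset$, and this intersection is contained in $\bigcap_{n\geq m}B_n$.

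There is essentially no serious obstacle here; the only subtle point, and the reason a trivial choice $\mathcal{B}_n=\mathcal{A}_n$ does not already work, is that in the prefix construction we must produce \emph{strict} inclusions $\Cl_X(A_{k+1})\subseteq\Int_X(A_k)$ rather than merely $A_{k+1}\subseteq A_k$, so the regularity of $X$ is used at each backward step to squeeze a $\Cl_X$-into-$\Int_X$ witness out of the pseudobase $\mathcal{A}_{k+1}$.
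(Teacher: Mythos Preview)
Your proof is correct and takes essentially the same approach as the paper: the paper defines $\mathcal{B}_{n+1}=\{A\in\mathcal{A}_{n+1}:\Cl_X(A)\subseteq\Int_X(B)\text{ for some }B\in\mathcal{B}_n\}$ recursively, which unfolds to exactly your ``backward padding'' definition, and the verification that each $\mathcal{B}_n$ is a pseudobase and the backward-extension argument for (ii) are the same in both.
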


\proof
If ${\cal A}$ and $\B$ are families of subsets of a topological space $X$, then symbol
${\cal A}\sqsubseteq \B$
means that for every $A\in {\cal A}$ there exists $B\in\B$ such that $\Cl_X(A)\subseteq \Int_X(B)$.

By induction on $n\in\N$, we define a pseudobase $\B_n\subseteq {\cal A}_n$ of $X$ as follows.
First, let $\B_0=\A_0$.
If $n\in\N$ and 
$\B_n\subseteq {\cal A}_n$ has already been defined,
then we let 
\begin{equation}
\B_{n+1}=\{A\in\A_{n+1}:\text{ there exists }B\in\B_{n}\text{ such that }
\Cl_X(A) \subseteq \Int_X(B)\}.
\end{equation}
By definition, $\B_{n+1}\sqsubseteq \B_{n}$ and $\B_{n+1}\subseteq \A_{n+1}$. 
Since $\B_{n}\subseteq \A_n$ and both $\A_{n+1}$ and $\A_n$ are 
pseudobases of $X$, one easily sees that $\B_{n+1}$ is a pseudobase of $X$ as well.

We have constructed a sequence of pseudobases of $X$ such that (i) holds
and
$\B_{n+1}\sqsubseteq \B_{n}$ for every $n\in\N$.

(ii) Suppose that $m\in\N$ and
 $\{B_{n}:n\geq m\}$ is a (possibly) ``truncated'' nested sequence in $X$ such that $B_{n}\in\cal B_{n}$ for each $n\geq m$.
We are going to extend this sequence
to a ``full-size''  nested sequence $\{B_n:n\in\N\}$ in $X$
such that $B_n\in\B_n$ for all $n\in\N$.
If $m=0$, then there is nothing  to do.
Now, suppose that $m>0$. 
Since $\B_m\sqsubseteq \B_{m-1}\sqsubseteq\dots\sqsubseteq
\B_1\sqsubseteq \B_0$, we can 
construct, by a finite reverse induction on $m$, a sequence 
$B_{{m}-1}$, $B_{{m}-2},\ldots, B_{0}$
 such that $B_{i}\in\B_i$ and $\Cl_X(B_{i+1})\subseteq \Int_X(B_{i})$ for every 
$i= {m}-1, {m}-2,\ldots, 0$.
Clearly, $\{B_n:n\in\N\}$ is a nested sequence in $X$
extending the original ``truncated'' nested sequence $\{B_{n}:n\geq m\}$ and satisfying $B_n\in\B_n$ for all $n\in\N$.
From this and  (i), we conclude that $B_n\in\A_n$ for all $n\in\N$. 
Since the sequence $\{\cal A_n:n\in\N\}$ is complete, $\emptyset \not=\bigcap \{B_n:n\in\N\}\subseteq \{B_{n}:n\geq m\}$. 
This proves (ii). 
\endproof

The next corollary shows that, in the definition of a complete sequence $\{\B_n:n\in\N\}$, one may assume, without loss of generality, that $X\in \B_n$ for every $n\in\N$.

\begin{corollary}
\label{no:loss:of:generality}
If $\{\A_n:n\in\N\}$ is a complete sequence in a non-empty space $X$, then one can choose 
$\B_n\subseteq \A_n$ for every $n\in\N$ in such a way that 
$\{{\cal C}_n:n\in\N\}$ becomes a complete sequence in $X$, where
${\cal C}_n=\B_n\cup\{X\}$ for every $n\in\N$.
\end{corollary}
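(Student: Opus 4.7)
\textbf{Proof plan for Corollary \ref{no:loss:of:generality}.}
The plan is to apply Lemma \ref{lemma:Tkacuk} to the given complete sequence $\{\A_n:n\in\N\}$ in order to obtain a sequence $\{\B_n:n\in\N\}$ of pseudobases of $X$ with $\B_n\subseteq\A_n$ satisfying the ``truncated nestedness'' property (ii) of that lemma. We then define $\C_n=\B_n\cup\{X\}$ for every $n\in\N$ and verify both conditions required by Definitions \ref{def:nested:and:complete:sequences}.

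First, I would check that each $\C_n$ is a pseudobase of $X$. Since $X\ne\emptyset$, we have $\Int_X(X)=X\ne\emptyset$, so the new element $X$ satisfies condition (i) of Definition \ref{def:pi-pseudobase}; the remaining members of $\C_n$ already satisfy it because $\B_n$ is a pseudobase. Condition (ii) of Definition \ref{def:pi-pseudobase} holds for $\C_n$ because it already holds for the subfamily $\B_n\subseteq\C_n$.

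Next, I would verify that the sequence $\{\C_n:n\in\N\}$ is centered. Let $\{C_n:n\in\N\}$ be a nested sequence with $C_n\in\C_n$ for all $n\in\N$. The key observation is that if $C_{n+1}=X$, then nestedness gives $X=\Cl_X(C_{n+1})\subseteq\Int_X(C_n)\subseteq C_n$, forcing $C_n=X$. Hence the set $\{n\in\N:C_n=X\}$ is downward closed in $\N$ and therefore of one of the following forms: empty, all of $\N$, or an initial interval $\{0,1,\dots,m-1\}$ for some $m\ge 1$. If $\{n:C_n=X\}=\N$, then $\bigcap_{n\in\N}C_n=X\ne\emptyset$. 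Otherwise there exists a smallest index $m\in\N$ with $C_m\ne X$, and then $C_n\in\B_n$ for every $n\ge m$ (since $C_n=X$ would propagate backwards to contradict the minimality of $m$). The sequence $\{C_n:n\ge m\}$ is a truncated nested sequence in $\{\B_n\}$, so Lemma \ref{lemma:Tkacuk}(ii) yields $\bigcap_{n\ge m}C_n\ne\emptyset$; and since $C_n=X$ for $n<m$, this equals $\bigcap_{n\in\N}C_n$.

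The whole argument is essentially bookkeeping once Lemma \ref{lemma:Tkacuk} is in hand; the only conceptual point requiring care is the ``propagation backwards'' observation that forces the indices at which $C_n=X$ to form a downward-closed set, which lets us split into the two cases handled above. I would expect no serious obstacle beyond getting this dichotomy cleanly stated.
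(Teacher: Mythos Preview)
Your proposal is correct and follows essentially the same route as the paper: apply Lemma~\ref{lemma:Tkacuk}, set ${\cal C}_n=\B_n\cup\{X\}$, and use the observation that the indices where $C_n=X$ form an initial segment (you phrase this as ``propagation backwards'' from nestedness, the paper equivalently uses that a nested sequence is decreasing) to reduce to Lemma~\ref{lemma:Tkacuk}(ii). Your explicit verification that each ${\cal C}_n$ is a pseudobase is a small point the paper leaves implicit, but otherwise the arguments coincide.
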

\begin{proof}
Use Lemma \ref{lemma:Tkacuk} to define the sequence $\{\B_n:n\in\N\}$ as in the conclusion of that lemma.
Let ${\cal C}_n=\B_n\cup\{X\}$ for $n\in\N$.
Let $\{B_n:n\in\N\}$ be a nested sequence in $X$ such that
$B_n\in{\cal C}_n$ for all $n\in\N$.
If $B_n=X$ for all $n\in\N$, then $\bigcap_{n\in\N} B_n=X\not=\emptyset$. Otherwise,
there exists $m\in\N$ such that $B_m\not=X$ and $B_0=B_1=\dots=B_{m-1}=X$.
Since the sequence $\{B_n:n\in\N\}$ is nested, 
it is decreasing by Remark \ref{decreasing:vs:nested}(i).
Since $B_m\not=X$,
it follows that $B_n\not=X$ (equivalently, $B_n\in\B_n$) for all $n\ge m$. 
By item (ii) of Lemma \ref{lemma:Tkacuk},
$\emptyset \not=\{B_{n}:n\geq m\}=\bigcap \{B_n:n\in\N\}$. 
\end{proof}

\begin{lemma}
\label{dense:nested:sequences}
Let $\{A_n:n\in\N\}$ and $\{B_n:n\in\N\}$  be two families of subsets of a topological space $X$ such that:
\begin{itemize}
\item[(i)]
 $B_n$ is a dense subset of $A_n$ for every $n\in\N$;
\item[(ii)] $\{B_n:n\in\N\}$ is nested.
\end{itemize}
Then $\{A_n:n\in\N\}$ is nested 
and $\bigcap\{A_n:n\in\N\}=\bigcap\{B_n:n\in\N\}$.
\end{lemma}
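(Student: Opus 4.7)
The plan is to unpack the definition of denseness of $B_n$ in $A_n$, namely $B_n \subseteq A_n$ and $A_n \subseteq \Cl_X(B_n)$, and then push the nestedness of $\{B_n:n\in\N\}$ up to $\{A_n:n\in\N\}$ by squeezing each $A_n$ between $B_n$ and $\Cl_X(B_n)$.

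For the first assertion, I would fix $n\in\N$ and chain three inclusions:
\[
\Cl_X(A_{n+1}) \subseteq \Cl_X(\Cl_X(B_{n+1})) = \Cl_X(B_{n+1}) \subseteq \Int_X(B_n) \subseteq \Int_X(A_n),
\]
where the first uses $A_{n+1} \subseteq \Cl_X(B_{n+1})$ from density, the middle uses nestedness of $\{B_n:n\in\N\}$, and the last uses $B_n \subseteq A_n$ combined with monotonicity of the interior. This shows $\{A_n:n\in\N\}$ is nested in the sense of Definition \ref{def:nested}.

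For the equality of intersections, the inclusion $\bigcap_{n\in\N} B_n \subseteq \bigcap_{n\in\N} A_n$ is immediate from $B_n \subseteq A_n$. For the reverse inclusion, I would invoke Remark \ref{rem:nested} applied to both nested sequences:
\[
\bigcap_{n\in\N} A_n = \bigcap_{n\in\N} \Cl_X(A_n) \subseteq \bigcap_{n\in\N} \Cl_X(B_n) = \bigcap_{n\in\N} B_n,
\]
where the middle inclusion uses $A_n \subseteq \Cl_X(B_n)$ from density, and the outer equalities come from the nestedness of $\{A_n:n\in\N\}$ (just established) and $\{B_n:n\in\N\}$ (assumed), respectively.

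There is no real obstacle here; the lemma is essentially a bookkeeping exercise with closure, interior, and denseness, and both conclusions fall out after applying Remark \ref{rem:nested} to the appropriate nested sequence. The only minor subtlety is that one must have ``nestedness of $\{A_n:n\in\N\}$'' in hand \emph{before} using Remark \ref{rem:nested} to rewrite $\bigcap A_n$ as $\bigcap \Cl_X(A_n)$, so the order of the two parts of the proof is fixed.
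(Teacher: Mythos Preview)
Your proof is correct and follows essentially the same approach as the paper: the paper uses the equality $\Cl_X(A_{n+1})=\Cl_X(B_{n+1})$ (which density gives outright) where you use the one-sided inclusion $\Cl_X(A_{n+1})\subseteq\Cl_X(B_{n+1})$, and then both arguments conclude the equality of intersections via Remark~\ref{rem:nested} applied to the two nested sequences. The only cosmetic difference is that the paper writes the intersection equality as a single chain of equalities, while you split it into two inclusions.
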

\proof
Fix $n\in\N$. Then $\Cl_X (B_{n+1}) \subseteq \Int _X (B_n)$
by (ii) and Definition \ref{def:nested}.
Furthermore,
$
\Cl_X (B_{n+1})=\Cl_X (A_{n+1})
$
by (i).
Finally, 
$B_n\subseteq A_n$ implies
$\Int _X (B_n)\subseteq\Int _X (A_n)$.
This shows that
$\Cl_X (A_{n+1})\subseteq \Int _X (A_n)$.
Since this inclusion holds for every $n\in\N$, the family
$\{A_n:n\in\N\}$ is nested by Definition \ref{def:nested}.

Since both $\{A_n:n\in\N\}$ and $\{B_n:n\in\N\}$ are nested, 
from 
Remark \ref{rem:nested}
  and (i) we get
$$
\bigcap\{A_n:n\in\N\}=\bigcap\{\Cl_X(A_n):n\in\N\}=
\bigcap\{\Cl_X(B_n):n\in\N\}=\bigcap\{B_n:n\in\N\}.
$$
This finishes the proof.
\endproof

\begin{lemma}
\label{dense:subfamily}
Let $\{{\cal A}_n:n\in\N\}$
and $\{{\cal B}_n:n\in\N\}$ be two sequences of families of subsets of a topological space $X$ having the following property:
For every $n\in\N$ and each $B\in {\cal B}_n$ there exists some $A\in {\cal A}_n$ such that $B$ is a dense subset of $A$.
If the sequence $\{{\cal A}_n:n\in\N\}$ is centered, then so is the
sequence $\{{\cal B}_n:n\in\N\}$.
\end{lemma}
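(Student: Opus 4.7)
The plan is to invoke Lemma \ref{dense:nested:sequences} directly: it already packages exactly the bridge we need between a nested sequence of ``dense subsets'' and a nested sequence of the ``ambient sets'' that contain them, together with the preservation of the intersection.

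Concretely, to verify that $\{\B_n : n \in \N\}$ is centered, I would start from an arbitrary nested sequence $\{B_n : n \in \N\}$ with $B_n \in \B_n$ for every $n \in \N$, and use the hypothesis to select, for each $n$, some $A_n \in \A_n$ such that $B_n$ is dense in $A_n$ (one picks an $A_n$ for each $n$ independently). The pair of sequences $\{A_n : n \in \N\}$ and $\{B_n : n \in \N\}$ then satisfies conditions (i) and (ii) of Lemma \ref{dense:nested:sequences}. Applying that lemma yields that $\{A_n : n \in \N\}$ is itself nested and that
\[
\bigcap_{n \in \N} A_n \;=\; \bigcap_{n \in \N} B_n.
\]

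Since $A_n \in \A_n$ for every $n \in \N$ and the sequence $\{\A_n : n \in \N\}$ is centered by assumption, Definition \ref{def:nested:and:complete:sequences}(i) gives $\bigcap_{n \in \N} A_n \neq \emptyset$. Combining this with the displayed equality yields $\bigcap_{n \in \N} B_n \neq \emptyset$, which is exactly what Definition \ref{def:nested:and:complete:sequences}(i) requires in order to conclude that $\{\B_n : n \in \N\}$ is centered.

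There is no real obstacle here: the work has been front-loaded into Lemma \ref{dense:nested:sequences}, so the only thing to be careful about is the (trivial but necessary) choice of the $A_n$ for each $n$ via the hypothesis, and checking that hypotheses (i) and (ii) of that lemma are matched by our setup — $B_n$ is dense in $A_n$ by construction, and $\{B_n : n \in \N\}$ is nested by choice. No use of the axiom of choice beyond countable choice is involved, since only countably many selections $B_n \mapsto A_n$ are needed.
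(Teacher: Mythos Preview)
Your proposal is correct and follows essentially the same approach as the paper's own proof: pick a nested sequence $\{B_n\}$ with $B_n\in\B_n$, choose $A_n\in\A_n$ with $B_n$ dense in $A_n$, apply Lemma~\ref{dense:nested:sequences} to get that $\{A_n\}$ is nested with $\bigcap_n A_n=\bigcap_n B_n$, and then use centeredness of $\{\A_n\}$ to conclude. The paper's argument is identical in structure and detail.
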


\proof
Let $\{B_n:n\in\N\}$ be a nested sequence such that $B_n \in \cal{B}_n$ for all $n\in\mathbb{N}$.
By our assumption, for each $n\in\N$ we can find $A_n\in \cal{A}_n$
such that $B_n$ is a dense subset of $A_n$.
Applying Lemma \ref{dense:nested:sequences}, we conclude that 
the sequence $\{A_n:n\in\N\}$ is nested
and
\begin{equation}
\label{A_n:B_n}
\bigcap\{A_n:n\in\N\}=\bigcap\{B_n:n\in\N\}.
\end{equation}
Assume now that the sequence $\{{\cal A}_n:n\in\N\}$ is centered.
Then 
$\bigcap\{A_n:n\in\N\}\not=\emptyset$
by Definition \ref{def:nested:and:complete:sequences}(i).
Combining this with \eqref{A_n:B_n}, we conclude that 
$\bigcap\{B_n:n\in\N\}\not=\emptyset$.
Since this holds for each nested sequence $\{B_n:n\in\N\}$ such that
$B_n\in{\cal B}_n$ for all $n\in\N$, 
the sequence $\{{\cal B}_n:n\in\N\}$ is centered by
Definition \ref{def:nested:and:complete:sequences}(i).
\endproof

\begin{corollary}
\label{dense:sequences}
Let $\{{\cal A}_n:n\in\N\}$ be a complete sequence in a topological space $X$.
For every $n\in\N$ and each $A\in {\cal A}_n$, let 
$U_{n,A}$ be a dense subset of $A$ which is also open in $X$.
Then $\{{\cal B}_n:n\in\N\}$ is a complete sequence in $X$, where 
${\cal B}_n=\{U_{n,A}:A\in {\cal A}_n\}$
for each $n\in \N$.
\end{corollary}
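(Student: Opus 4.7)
The plan is to verify the two defining clauses of Definition \ref{def:nested:and:complete:sequences}(ii) for the sequence $\{{\cal B}_n:n\in\N\}$: first, that each ${\cal B}_n$ is a pseudobase of $X$, and second, that the sequence is centered. The centeredness will follow immediately from Lemma \ref{dense:subfamily}, so the only real work is the pseudobase check, which is straightforward from the hypotheses.

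First I would verify the pseudobase conditions of Definition \ref{def:pi-pseudobase} for a fixed $n\in\N$. For any $A\in {\cal A}_n$, since ${\cal A}_n$ is a pseudobase of $X$, we have $\Int_X(A)\neq\emptyset$, so in particular $A\neq\emptyset$. Then the dense subset $U_{n,A}$ of $A$ is non-empty, and since it is open in $X$ by hypothesis, $\Int_X(U_{n,A})=U_{n,A}\neq\emptyset$, which gives clause (i). For clause (ii), let $V$ be any non-empty open subset of $X$; since ${\cal A}_n$ is a pseudobase, there exists $A\in {\cal A}_n$ with $A\subseteq V$, and then $U_{n,A}\subseteq A\subseteq V$ with $U_{n,A}\in {\cal B}_n$, as required.

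Next I would apply Lemma \ref{dense:subfamily} to the pair of sequences $\{{\cal A}_n:n\in\N\}$ and $\{{\cal B}_n:n\in\N\}$. The hypothesis of that lemma requires that for every $n\in\N$ and every $B\in {\cal B}_n$ there exists $A\in {\cal A}_n$ such that $B$ is dense in $A$; but this is exactly the defining property of the family ${\cal B}_n=\{U_{n,A}:A\in {\cal A}_n\}$, since each $U_{n,A}$ is by construction a dense subset of the corresponding $A\in {\cal A}_n$. The sequence $\{{\cal A}_n:n\in\N\}$ is centered because it is complete, so Lemma \ref{dense:subfamily} yields that $\{{\cal B}_n:n\in\N\}$ is centered.

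Combining the two steps, ${\cal B}_n$ is a pseudobase of $X$ for every $n\in\N$ and the sequence $\{{\cal B}_n:n\in\N\}$ is centered; by Definition \ref{def:nested:and:complete:sequences}(ii) this sequence is complete. There is no real obstacle here; the corollary is essentially a packaging of Lemma \ref{dense:subfamily} together with the trivial observation that shrinking each pseudobase element to an open dense subset preserves the pseudobase property.
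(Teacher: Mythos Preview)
Your proof is correct and follows essentially the same approach as the paper's: both invoke Lemma \ref{dense:subfamily} for centeredness and verify the pseudobase conditions directly from the hypotheses, with only the order of the two steps reversed.
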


\proof
Since $\{{\cal A}_n:n\in\N\}$ be a complete sequence in
$X$, it is centered by Definition \ref{def:nested:and:complete:sequences}(ii).
Applying Lemma \ref{dense:subfamily}, we conclude that 
the sequence $\{{\cal B}_n:n\in\N\}$ is centered as well. 
By Definition \ref{def:nested:and:complete:sequences}(ii),
it remains only to check that each ${\cal B}_n$
is a pseudobase for $X$.
To achieve this, we need to check both conditions from  
Definition \ref{def:pi-pseudobase}.

To check condition (i), fix $B\in {\cal B}_n$ arbitrarily.
Then $B=U_{n,A}$ for some $A\in {\cal A}_n$.
Since ${\cal A}_n$ is a pseudobase for $X$,
$\Int_X(A)\not=\emptyset$ by Definition \ref{def:pi-pseudobase}(i).
In particular, $A\not=\emptyset$.
Since $U_{n,A}$ is dense in $A\not=\emptyset$, 
it follows that $U_{n,A}\not=\emptyset$.
Since $U_{n,A}$ is open in $X$, $\emptyset\not=U_{n,A}=\Int_X(U_{n,A})=\Int_X(B)$.

To check (ii), take an arbitrary non-empty open subset $W$ of $X$.
Since ${\cal A}_n$ is a pseudobase for $X$, Definition \ref{def:pi-pseudobase}(ii) guarantees the existence of $A\in {\cal A}_n$ such that
$A\subseteq W$. Now $U_{n,A}\subseteq A\subseteq W$ and 
$U_{n,A}\in {\cal B}_n$.
\endproof

\begin{notation}
\label{good:sets}
For a map $f:X\to Y$ between topological spaces $X$ and $Y$, define
$${\cal P}_f=\{F\subseteq X:F=f^{\leftarrow} (f(F))\}.$$ 
\end{notation}

\begin{lemma}
\label{image:of:a:centered:sequence}
Let $f:X\to Y$ be a continuous map.
\begin{itemize}
\item[(i)] If $A,B\in {\cal P}_f$ and $\Cl_{f(X)}(A)\subseteq \Int_{f(X)}(B)$,
then $\Cl_X(A)\subseteq \Int_X(B)$.
\item[(ii)] If $\{B_n: n\in\N\}\subseteq {\cal P}_f$ 
and the sequence $\{f(B_n):n\in\N\}$ is nested in $f(X)$, 
then the sequence $\{B_n:n\in\N\}$ is nested in $X$.
\item[(iii)] If $\{{\cal B}_n: n\in\N\}$ is a centered sequence in $X$ such that $\bigcup \{{\cal B}_n: n\in\N\}\subseteq {\cal P}_f$, then
 $\{{\cal C}_n: n\in\N\}$ is a centered sequence in $f(X)$, where 
${\cal C}_n=\{f(B): B\in{\cal B}_n\}$
for every $n\in\N$.
\end{itemize}
\end{lemma}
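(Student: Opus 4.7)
The plan is to prove the three items in order, with (i) as the workhorse, (ii) as an immediate consequence applied to consecutive pairs of sets, and (iii) combining (ii) with the hypothesis of centeredness. Throughout, I read the closure/interior notation in (i) in the only way that makes sense given that $A,B\subseteq X$: namely $\Cl_{f(X)}(A)$ stands for $\Cl_{f(X)}(f(A))$ and similarly for $B$. This is harmless because $A\in {\cal P}_f$ means $A=f^{\leftarrow}(f(A))$, so $A$ and $f(A)$ determine each other.

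For (i), the key is the corestriction $\bar f\colon X\to f(X)$, which is continuous because $f\colon X\to Y$ is. I would argue along two symmetric lines. First, $\bar f^{\leftarrow}(\Cl_{f(X)}(f(A)))$ is closed in $X$ and contains $A$, hence contains $\Cl_X(A)$. Second, $\bar f^{\leftarrow}(\Int_{f(X)}(f(B)))$ is open in $X$, and since $\Int_{f(X)}(f(B))\subseteq f(B)$, it is contained in $\bar f^{\leftarrow}(f(B))=f^{\leftarrow}(f(B))=B$, where the last equality uses $B\in {\cal P}_f$; being open and contained in $B$, it lies in $\Int_X(B)$. Chaining through the hypothesis $\Cl_{f(X)}(f(A))\subseteq \Int_{f(X)}(f(B))$ then yields
\[
\Cl_X(A)\subseteq \bar f^{\leftarrow}(\Cl_{f(X)}(f(A)))\subseteq \bar f^{\leftarrow}(\Int_{f(X)}(f(B)))\subseteq \Int_X(B).
\]

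For (ii), the nestedness of $\{f(B_n):n\in\N\}$ in $f(X)$ says $\Cl_{f(X)}(f(B_{n+1}))\subseteq \Int_{f(X)}(f(B_n))$ for every $n\in\N$. Since $B_{n+1},B_n\in {\cal P}_f$, part (i) applies and gives $\Cl_X(B_{n+1})\subseteq \Int_X(B_n)$, i.e.\ $\{B_n:n\in\N\}$ is nested in $X$ by Definition \ref{def:nested}.

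For (iii), let $\{C_n:n\in\N\}$ be a nested sequence in $f(X)$ with $C_n\in {\cal C}_n$. Choose $B_n\in {\cal B}_n$ with $f(B_n)=C_n$; then $B_n\in {\cal P}_f$ by the hypothesis $\bigcup_n {\cal B}_n\subseteq {\cal P}_f$. By part (ii), the sequence $\{B_n:n\in\N\}$ is nested in $X$, so centeredness of $\{{\cal B}_n:n\in\N\}$ forces $\bigcap_{n\in\N} B_n\neq\emptyset$. Picking any $x$ in this intersection gives $f(x)\in \bigcap_{n\in\N} f(B_n)=\bigcap_{n\in\N} C_n$, so the latter is non-empty, proving $\{{\cal C}_n:n\in\N\}$ is centered by Definition \ref{def:nested:and:complete:sequences}(i).

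The only subtle step is (i); everything else is an unwinding of definitions. The main obstacle is correctly using the saturation property $B=f^{\leftarrow}(f(B))$ to pass from ``open in $f(X)$ contained in $f(B)$'' to ``open in $X$ contained in $B$,'' and hence into $\Int_X(B)$. Once that is in place, (ii) and (iii) follow with almost no further work.
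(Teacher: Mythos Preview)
Your proof is correct and follows essentially the same approach as the paper: the same chain of inclusions through $f^{\leftarrow}(\Cl_{f(X)}(f(A)))$ and $f^{\leftarrow}(\Int_{f(X)}(f(B)))$ for (i), and the same direct deductions for (ii) and (iii). Your reading of the notation $\Cl_{f(X)}(A)$ as $\Cl_{f(X)}(f(A))$ is exactly how the paper's own proof interprets it.
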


\proof
(i)
Since $A\in {\cal P}_f$, we have
$A=f^{\leftarrow}(f(A))\subseteq f^{\leftarrow}(\Cl_{f(X)}(f(A)))$.
Since $f$ is continuous, the set $f^{\leftarrow}(\Cl_{f(X)}(f(A)))$ is closed in $X$, so 
\begin{equation}
\label{eq:2}
\Cl_X(A)\subseteq f^{\leftarrow}(\Cl_{f(X)}(f(A))).
\end{equation}
Since $\Cl_{f(X)}(A)\subseteq \Int_{f(X)}(B)$, one has
\begin{equation}
\label{eq:1}
f^{\leftarrow}(\Cl_{f(X)}(f(A)))\subseteq f^{\leftarrow}(\Int_{f(X)}(f(B))).
\end{equation}
Since $B\in {\cal P}_f$, we have
$f^{\leftarrow}(\Int_{f(X)}(f(B)))\subseteq f^{\leftarrow}(f(B))=B$.
Since $f$ is continuous, the set $f^{\leftarrow}(\Int_{f(X)}(f(B)))$ is open n $X$, so 
\begin{equation}
\label{eq:3}
f^{\leftarrow}(\Int_{f(X)}(f(B)))\subseteq \Int_X(B).
\end{equation}
Combining \eqref{eq:2}, \eqref{eq:1} and \eqref{eq:3} gives the conclusion of item (i).

(ii) follows from (i) and Definition \ref{def:nested}.

(iii) Let $\{C_n:n\in\N\}$ be a nested sequence in $f(X)$ such that $C_n\in{\cal C}_n$ for every $n\in\N$.  
For every $n\in\N$ fix $B_n\in{\cal B}_n$ such that $C_n=f(B_n)$.
By item (ii), the sequence $\{B_n:n\in\N\}$ is nested in $X$.
Since the sequence $\{{\cal B}_n:n\in\N\}$ is centered in $X$, Definition \ref{def:nested:and:complete:sequences}(i) implies that $\bigcap_{n\in\N}B_n \not= \emptyset$.
Therefore,
$\emptyset\not=\bigcap_{n\in\N} f(B_n)=\bigcap_{n\in\N} C_n$.
By Definition \ref{def:nested:and:complete:sequences}(i), this shows that 
the sequence $\{{\cal C}_n:n\in\N\}$ is centered in $f(X)$.
\endproof

\begin{lemma}
\label{inverse:preimages:of:good:sets}
Let $f:X\to f(X)$, $g:X\to g(X)$ and $h: g(X)\to f(X)$ be functions such that
$f=h\circ g$. If $P\in{\cal P}_f$, then 
$g(P)= h^{\leftarrow}(f(P))$.
\end{lemma}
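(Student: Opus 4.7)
The plan is to verify the two set inclusions $g(P)\subseteq h^{\leftarrow}(f(P))$ and $h^{\leftarrow}(f(P))\subseteq g(P)$ separately. Both amount to elementary pointwise chases using the factorization $f=h\circ g$, together with the defining property $P=f^{\leftarrow}(f(P))$ of membership in ${\cal P}_f$; see Notation \ref{good:sets}.

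For the forward inclusion, I would pick $y\in g(P)$, write $y=g(x)$ for some $x\in P$, and compute $h(y)=h(g(x))=f(x)\in f(P)$, which gives $y\in h^{\leftarrow}(f(P))$ at once. Notice that this direction uses only the factorization $f=h\circ g$ and does not appeal to the hypothesis $P\in{\cal P}_f$.

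For the reverse inclusion, I would let $y\in h^{\leftarrow}(f(P))$. Since the codomain of $g$ is $g(X)$, there exists $x\in X$ with $g(x)=y$. Then $f(x)=h(g(x))=h(y)\in f(P)$, hence $x\in f^{\leftarrow}(f(P))$. Here is the point where the assumption $P\in{\cal P}_f$ is crucial: the equality $P=f^{\leftarrow}(f(P))$ forces $x\in P$, and therefore $y=g(x)\in g(P)$.

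The only mild subtlety to flag is the ability to choose a preimage of $y$ under $g$, which is automatic because $g$ is being viewed explicitly as a surjection onto $g(X)$. Beyond that, no real obstacle is expected; the statement is a purely formal consequence of the factorization and the definition of ${\cal P}_f$.
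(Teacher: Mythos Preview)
Your proof is correct and essentially identical to the paper's own argument: the paper dismisses the forward inclusion with ``Clearly'' and proves the reverse inclusion by the same pointwise chase you give, invoking surjectivity of $g$ onto $g(X)$ and the defining equality $P=f^{\leftarrow}(f(P))$ at exactly the same spots.
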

\proof
Clearly,
$g(P)\subseteq h^{\leftarrow}(f(P))$.
Let us check the inverse inclusion $h^{\leftarrow}( f(P))\subseteq g(P)$.
Let $y\in h^{\leftarrow}( f(P))$ be arbitrary.
Since $y\in g(X)$, there exists $x\in X$ such that $g(x)=y$, and so
$f(x)=h(g(x))=h(y)\in f(P)$.
Therefore, $x\in f^{\leftarrow}(f(P))=P$, as $P\in{\cal P}_f$.
This means that $y=g(x)\in g(P)$.
\endproof

\begin{notation}
\label{non-empty-interior:sets}
For a map $f:X\to Y$ between topological spaces $X$ and $Y$, define
$$
\cal I_f=\{P\subseteq X:\Int_{f(X)}(f(F))\not=\emptyset \mbox{ for some set }F\subseteq P \mbox{ such that } F\in{\cal P}_f\}.
$$
\end{notation}

\begin{lemma}
\label{upwards:directed}
Let $f:X\to f(X)$, $g:X\to g(X)$ and $h:g(X)\to f(X)$
be continuous functions such that $f=h\circ g$ (we shall write
$f\preceq g$ in such a case). Then:
\begin{itemize}
\item[(i)]
${\cal P}_f\subseteq \cal{P}_g$;
\item[(ii)] 
${\cal I}_f\subseteq  \cal{I}_g$.
\end{itemize}
\end{lemma}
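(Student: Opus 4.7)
The plan is to prove both inclusions by straightforward set-chasing, exploiting the factorization $f = h\circ g$ together with Lemma \ref{inverse:preimages:of:good:sets} and the (automatic) surjectivity of $h$ onto $f(X)$. I do not anticipate any substantial obstacle here; the one point worth watching is, for item (ii), that the candidate open set witnessing non-empty interior in $g(X)$ is actually non-empty, which is exactly where surjectivity of $h$ enters.

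For item (i), let $F\in {\cal P}_f$, so $F=f^{\leftarrow}(f(F))$. The inclusion $F\subseteq g^{\leftarrow}(g(F))$ holds trivially, so the task is only to show $g^{\leftarrow}(g(F))\subseteq F$. Given $x\in g^{\leftarrow}(g(F))$, pick $y\in F$ with $g(x)=g(y)$; then
\[
f(x) = h(g(x)) = h(g(y)) = f(y) \in f(F),
\]
so $x\in f^{\leftarrow}(f(F)) = F$. This yields $F\in {\cal P}_g$ and proves (i).

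For item (ii), take $P\in {\cal I}_f$ and fix, according to Notation \ref{non-empty-interior:sets}, a set $F\subseteq P$ with $F\in {\cal P}_f$ and $\Int_{f(X)}(f(F))\neq\emptyset$. By (i), $F\in {\cal P}_g$, so it suffices to check $\Int_{g(X)}(g(F))\neq\emptyset$. Since $F\in {\cal P}_f$, Lemma \ref{inverse:preimages:of:good:sets} gives $g(F) = h^{\leftarrow}(f(F))$. Set $U = h^{\leftarrow}\bigl(\Int_{f(X)}(f(F))\bigr)$; continuity of $h$ makes $U$ open in $g(X)$, and clearly $U\subseteq h^{\leftarrow}(f(F)) = g(F)$.

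Finally, $U$ is non-empty: from $f = h\circ g$ and the surjectivity of $g:X\to g(X)$ we have $f(X) = h(g(X))$, so $h$ is surjective onto $f(X)$; hence the non-empty open set $\Int_{f(X)}(f(F))$ has non-empty preimage under $h$. Thus $\emptyset\neq U\subseteq \Int_{g(X)}(g(F))$, and the same $F\subseteq P$ witnesses $P\in {\cal I}_g$, completing (ii).
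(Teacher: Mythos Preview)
Your proof is correct and follows essentially the same approach as the paper's. The only cosmetic difference is in item (i): the paper invokes Lemma~\ref{inverse:preimages:of:good:sets} to write $g^{\leftarrow}(g(P))=g^{\leftarrow}(h^{\leftarrow}(f(P)))=(h\circ g)^{\leftarrow}(f(P))=f^{\leftarrow}(f(P))=P$ in one line, whereas you unwind this via direct element-chasing; for item (ii) the two arguments are identical, and your explicit justification of the surjectivity of $h$ is a point the paper takes for granted.
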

\proof
(i) 
Fix $P\in \cal P_f$. 
From Lemma \ref{inverse:preimages:of:good:sets} we get 
$$
g^{\leftarrow}(g(P))=
g^{\leftarrow}(h^{\leftarrow}(f(P)))
=
(h\circ g)^{\leftarrow}(f(P))
=
f^{\leftarrow}(f(P))
=P,
$$
where the last equality holds because $P\in{\cal P}_f$.
Therefore,
$P\in{\cal P}_g$.

(ii)
Assume now that  $P\in  {\cal I}_f$.
Then 
$U=\Int_{f(X)}(f(F))\not=\emptyset$ for some set $F\subseteq P$ such that $F\in{\cal P}_f$.
Since $h$ is a continuous surjection,
$V=h^{\leftarrow}(U)$ is a non-empty open subset of $g(X)$.
Since $U\subseteq f(F)$, from Lemma \ref{inverse:preimages:of:good:sets} we conclude 
that $V\subseteq g(F)$. Therefore, 
$\Int_{g(X)} (g(F))\not=\emptyset$.
Furthermore,
$F\in{\cal P}_g$ by item (i) applied to $F$ instead of $P$.
This shows that 
$P\in\cal{I}_g$.
\endproof

\section{A factorization theorem for \Tp ness}
\label{factorization:section}

For a topological space $X$ let
$\mathbb{S}_X$ be the set of all continuous mappings from $X$ into
the Hilbert cube $[0,1]^\N$. 
For $f,g\in\mathbb{S}_X$ we write
$f\preceq g$ if there exists a continuous mapping $h:g(X)\rightarrow
f(X)$ such that $f=h\circ g$.
  One can easily check that the relation
$\preceq$ on $\mathbb{S}_X$ is reflexive and transitive. However,
$\preceq$ is not  anti-symmetric. To fix this, we will introduce an
appropriate quotient of $\mathbb{S}_X$. If $f,g\in \mathbb{S}_X$,
$f\preceq g$ and $g\preceq f$, then we write $f\approx g$. Clearly,
$\approx$ is an equivalence relation on $\mathbb{S}_X$. One can
easily see that $f\approx g$ if and only iff there exists a
homeomorphism $h: g(X)\to f(X)$ such that $f=h\circ g$.

As usual, $[g]_\approx=\{f\in\mathbb{S}_X:f\approx g\}$ denotes the equivalence
class of $g\in  \mathbb{S}_X$ with respect to the relation
$\approx$.
Define
$\mathbb{F}_X=\{[g]_\approx: g\in \mathbb{S}_X\}$.
We write
$[f]_\approx\preceq[g]_\approx$ if $f\preceq g$. Clearly the
relation "$\preceq$" on $\mathbb{F}_X$ is well defined and makes
$\mathbb{F}_X$ into a partially ordered set (poset).
With a certain abuse of notation, from now on we will not distinguish
between $f\in \mathbb{S}_X$ and $[f]_\approx\in \mathbb{F}_X$.
In particular, we will write $f\in \mathbb{F}_X$ instead of cumbersome
$[f]_\approx\in \mathbb{F}_X$.

Suppose that $f:X\to f(X)$ is an arbitrary continuous mapping such
that $f(X)$ is separable metrizable. Since Hilbert cube is universal
for all separable metrizable spaces, there exists a homeomorphic
embedding $h: f(X)\to [0,1]^\N$. Now $g=h\circ f\in
\mathbb{S}_X$ (and with abuse of notation we have agreed upon also
$g\in \mathbb{F}_X$). So $f$ can be identified with its
``representative'' $g$ in $\mathbb{F}_X$.

\begin{definition}
Recall that a subset $C$ of a poset $(P, \le)$ is said to be {\em cofinal\/} in $(P, \le)$ provided that for every 
$p\in P$ there exists $q\in C$ such that $p\le q$.
\end{definition}

\begin{definition}
We say that a family $\mathscr{F}\subseteq \mathbb{F}_X$ is {\em countably closed\/} if $\triangle \mathscr{F}'\in \mathscr{F}$ for every countable subfamily  $\mathscr{F}'$ of $\mathscr{F}$.
\end{definition}

The following theorem is the main technical tool in this paper.

\begin{theorem}
\label{reflecting:pseudocompletness}
Let $X$ be a topological space,  
$\mathscr{F}\subseteq \mathbb{F}_X$
be a countably closed family and
$\{{\cal B}_n: n\in\N\}$ be a countable sequence
such that
\begin{equation}
\label{inside:F}
\bigcup\{{\cal B}_n: n\in\N\}\subseteq \left(\bigcup_{f\in\mathscr{F}} {\cal P}_f\right) \cap
\left(\bigcup_{f\in\mathscr{F}} {\cal I}_f\right).
\end{equation}
For every $f\in\mathscr{F}$ and each $n\in\N$, define 
\begin{equation}
\label{eq:C_n,f}
{\cal C}_{n,f}=\{f(B):B\in{\cal B}_n\cap{\cal P}_f\cap
 {\cal I}_f\}.
\end{equation}
\begin{itemize}
\item[(a)] If each ${\cal B}_n$ is a pseudobase of $X$, then 
the set 
$$
\mathscr{F}_1=\{f\in\mathscr{F}: {\cal C}_{n,f}
\mbox{ is a pseudobase for }
X 
\mbox{ for each }n\in \N\}
$$ 
is cofinal in $(\mathscr{F},\preceq)$.
\item[(b)] If $\{{\cal B}_n: n\in\N\}$ is a complete sequence in $X$, then 
the set 
$$
\mathscr{F}_2=\{f\in\mathscr{F}: \{{\cal C}_{n,f}: n\in \N\}
\mbox{ is a complete sequence in }
f(X)\}
$$
 is cofinal in $(\mathscr{F},\preceq)$; in particular, the set 
$$
\mathscr{F}_3=\{f\in\mathscr{F}: f(X) \mbox{ is \Tp}\}
$$ is cofinal in $(\mathscr{F},\preceq)$.
\end{itemize}
\end{theorem}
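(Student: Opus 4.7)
The plan is to establish cofinality of $\mathscr{F}_1$ and $\mathscr{F}_2$ by a standard diagonal refinement of length $\omega$, exploiting the countable closure of $\mathscr{F}$. Given $g\in\mathscr{F}$, I will construct the required $f\in\mathscr{F}$ with $g\preceq f$ as $f=\triangle\{g_k:k\in\N\}$ for a $\preceq$-chain $g=g_0\preceq g_1\preceq \cdots$ in $\mathscr{F}$ built as follows. At stage $k$, $g_k(X)$ is separable metrizable, so fix a countable base $\{V_{k,i}:i\in\N\}$ of non-empty open subsets. For each pair $(n,i)\in\N\times\N$, the pseudobase property of $\mathcal{B}_n$ produces some $B_{k,n,i}\in\mathcal{B}_n$ contained in the non-empty open set $g_k^{\leftarrow}(V_{k,i})$; by \eqref{inside:F} and two applications of Lemma \ref{upwards:directed}, one can pick $e_{k,n,i}\in\mathscr{F}$ with $B_{k,n,i}\in\mathcal{P}_{e_{k,n,i}}\cap\mathcal{I}_{e_{k,n,i}}$. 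Set $g_{k+1}=\triangle(\{g_k\}\cup\{e_{k,n,i}:n,i\in\N\})\in\mathscr{F}$; countable closure then gives $f\in\mathscr{F}$ as well.

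The heart of part (a) is to verify that $\mathcal{C}_{n,f}$ is a pseudobase of $f(X)$ for every $n$. Let $W\subseteq f(X)$ be a non-empty open set and let $h_k:f(X)\to g_k(X)$ denote the canonical projection satisfying $g_k=h_k\circ f$. The topology on $f(X)$ (as a diagonal) is generated by the sets $h_k^{\leftarrow}(U)$ with $U$ open in $g_k(X)$; since $g_k\preceq g_{k_0}$ for $k\le k_0$, a finite intersection $\bigcap_{k\in F}h_k^{\leftarrow}(U_k)$ collapses to a single pullback $h_{k_0}^{\leftarrow}(V)$ at $k_0=\max F$ for an appropriate non-empty open $V\subseteq g_{k_0}(X)$. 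Consequently $W$ contains some $h_{k_0}^{\leftarrow}(V_{k_0,i_0})$ drawn from the countable base at stage $k_0$, and then $B:=B_{k_0,n,i_0}\subseteq g_{k_0}^{\leftarrow}(V_{k_0,i_0})$ gives $f(B)\subseteq h_{k_0}^{\leftarrow}(V_{k_0,i_0})\subseteq W$. Since $e_{k_0,n,i_0}\preceq f$, Lemma \ref{upwards:directed} ensures $B\in\mathcal{P}_f\cap\mathcal{I}_f$, whence $f(B)\in\mathcal{C}_{n,f}$; the non-emptiness of $\Int_{f(X)}(f(B))$ follows from $B\in\mathcal{I}_f$ via Notation \ref{non-empty-interior:sets} (a witness $F\subseteq B$ satisfies $f(F)\subseteq f(B)$).

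Part (b) is then a bookkeeping step: assuming completeness of $\{\mathcal{B}_n\}$, the sequence $\{\mathcal{B}_n\cap\mathcal{P}_f\cap\mathcal{I}_f:n\in\N\}$ is centered as a term-wise subfamily of the centered $\{\mathcal{B}_n\}$ (Remark \ref{sufamily:of:a:centered:family:is:centered}(i)), and since its union lies in $\mathcal{P}_f$, Lemma \ref{image:of:a:centered:sequence}(iii) carries centeredness over to $\{\mathcal{C}_{n,f}\}$ in $f(X)$. Combined with part (a), this means $\{\mathcal{C}_{n,f}\}$ is a complete sequence in $f(X)$, so $f\in\mathscr{F}_2$; cofinality of $\mathscr{F}_3$ then follows immediately from Definition \ref{four:properties}(i). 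I expect the main obstacle to be the topological reduction in part (a)---justifying that, despite the iterated diagonalization, every non-empty basic open set of $f(X)$ shrinks to the pullback of a single basic open set at some finite stage $k_0$; this is precisely what forces the refinement to be performed level by level using a countable base at each $g_k$, rather than once at the limit.
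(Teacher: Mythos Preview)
Your proposal is correct and follows essentially the same approach as the paper's proof: both build a $\preceq$-increasing $\omega$-chain in $\mathscr{F}$ by repeatedly refining against a countable base of the current image, take the diagonal $f$, and then verify that $\{\mathcal{C}_{n,f}\}$ is a pseudobase (part (a)) and is centered via Remark~\ref{sufamily:of:a:centered:family:is:centered}(i) and Lemma~\ref{image:of:a:centered:sequence}(iii) (part (b)). The only cosmetic differences are that the paper isolates a single-step ``Claim'' (your passage $g_k\mapsto g_{k+1}$) and keeps the $\mathcal{P}$- and $\mathcal{I}$-witnesses separate rather than merging them into one $e_{k,n,i}$; your collapsing of basic open sets in $f(X)$ to a single-stage pullback via the bonding maps $g_k\preceq g_{k_0}$ is exactly the paper's argument with $q=\triangle_{i\le j}q_i^j$.
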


\proof
(a)
We shall first prove the following
\begin{claim}
\label{claim:1}
For every $f\in\mathscr{F}$ there exists $g\in\mathscr{F}$ such that:
\begin{itemize} 
\item[(i)]
$f\preceq g$;
\item[(ii)]
for every $n\in\N$ and each non-empty open subset $V$ of $f(X)$,
there exists $B_{f,V}^n\in {\cal B}_n\cap{\cal P}_{g} \cap{\cal I}_{g}$ 
satisfying  
$B_{f,V}^n\subseteq f^{\leftarrow}(V)$.
\end{itemize}
\end{claim}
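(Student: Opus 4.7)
The plan is to construct $g$ as a countable diagonal product of elements of $\mathscr{F}$ that dominates $f$ and simultaneously records witnesses for all the required memberships $B_{f,V}^n \in \mathcal{P}_g \cap \mathcal{I}_g$. The key enabling observation is that although condition (ii) ranges over \emph{every} non-empty open $V \subseteq f(X)$, the separability of $f(X)$ (which sits inside the Hilbert cube, since $f \in \mathbb{F}_X$) lets us reduce the verification to a countable family of $V$'s.

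First, I would fix a countable base $\{V_k : k \in \N\}$ of non-empty open sets of $f(X)$. For any non-empty open $V \subseteq f(X)$ there exists $k$ with $V_k \subseteq V$, so if I can produce, for every $n,k \in \N$, a set $B_{n,k} \in \mathcal{B}_n \cap \mathcal{P}_g \cap \mathcal{I}_g$ with $B_{n,k} \subseteq f^{\leftarrow}(V_k)$, then the assignment $B_{f,V}^n := B_{n,k}$ (for any such $k$) will witness (ii). This reduces the whole problem to a countable collection of choices.

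Second, for each pair $(n,k)$ I would apply the pseudobase property of $\mathcal{B}_n$: since $f^{\leftarrow}(V_k)$ is a non-empty open subset of $X$, Definition \ref{def:pi-pseudobase}(ii) produces some $B_{n,k} \in \mathcal{B}_n$ with $B_{n,k} \subseteq f^{\leftarrow}(V_k)$. By the hypothesis \eqref{inside:F}, I can then select $f_{n,k}, f'_{n,k} \in \mathscr{F}$ such that $B_{n,k} \in \mathcal{P}_{f_{n,k}}$ and $B_{n,k} \in \mathcal{I}_{f'_{n,k}}$.

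Third, I would take $\mathscr{F}' = \{f\} \cup \{f_{n,k}, f'_{n,k} : n,k \in \N\}$, a countable subfamily of $\mathscr{F}$, and set $g = \triangle \mathscr{F}'$. Countable closedness of $\mathscr{F}$ gives $g \in \mathscr{F}$, and the coordinate projections from $g(X)$ witness $f' \preceq g$ for every $f' \in \mathscr{F}'$; in particular $f \preceq g$, which is (i). Lemma \ref{upwards:directed} then upgrades $B_{n,k} \in \mathcal{P}_{f_{n,k}} \cap \mathcal{I}_{f'_{n,k}}$ to $B_{n,k} \in \mathcal{P}_g \cap \mathcal{I}_g$, finishing (ii). The argument has no deep obstacle; the only pitfall is to remember that \eqref{inside:F} may supply \emph{different} maps certifying membership in $\mathcal{P}$ and $\mathcal{I}$, so one must include both $f_{n,k}$ and $f'_{n,k}$ in $\mathscr{F}'$, and it is precisely the monotonicity supplied by Lemma \ref{upwards:directed} together with closure of $\mathscr{F}$ under countable diagonals that allows a \emph{single} $g$ to absorb all of them.
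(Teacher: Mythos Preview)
Your proposal is correct and follows essentially the same approach as the paper's proof: fix a countable base of $f(X)$, use the pseudobase property to pick one element of $\mathcal{B}_n$ inside each basic preimage, invoke \eqref{inside:F} to get (possibly different) maps in $\mathscr{F}$ witnessing membership in $\mathcal{P}$ and $\mathcal{I}$, take the diagonal of all these maps together with $f$, and then apply Lemma \ref{upwards:directed}. The only differences are notational.
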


\proof
Fix 
$f\in\mathscr{F}$. 
Let  
$\mathcal{U}$ be a countable base of $f(X)$.
Let 
$U\in \mathcal{U}\setminus\{\emptyset\}$.
Then $f^{\leftarrow}(U)$ is a  non-empty open subset of $X$.
Let $n\in\N$ be arbitrary.
Since ${\cal B}_n$ is a pseudobase in $X$,
there exists 
$C_U^n\in\cal B_n$ such that $C_U^n\subseteq f ^{\leftarrow }(U)$. 
\eqref{inside:F} allows us to find
maps $f_U^n,g_U^n\in \mathscr{F}$
such that $C_U^n\in{\cal P}_{f_U^n} \cap{\cal I}_{g_U^n}$. 
Define 
$$
g=\bigtriangleup \{f_U^n\triangle g_U^n: U\in \mathcal{U},n\in\N\}
\triangle f.  
$$

Since $\mathscr{F}$ is countably closed, $g\in \mathscr{F}$.
Clearly, $f \preceq g$, so (i) is satisfied.

Let us check (ii). Let $n\in\N$ and let $V$ be a non-empty open subset of $f(X)$.
Since $\mathcal{U}$ is a base of $f(X)$, we can  find $U\in\mathcal{U}\setminus\{\emptyset\}$ such that $U\subseteq V$.
We claim that $B^n_{f,V}=C^n_{U}$ is  as required.
Indeed, since $C^n_{U}\in {\cal P}_{f_U^n}$ and 
$f_U^n\preceq g$, Lemma \ref{upwards:directed}(i) implies
$C^n_{U}\in {\cal P}_g$.
Similarly, since $C^n_{U}\in {\cal I}_{g_U^n}$ and 
$g_U^n\preceq g$, Lemma \ref{upwards:directed}(ii) implies
$C^n_{U}\in {\cal I}_g$.
Since $C^n_{U}\in{\cal B}_n$ by our choice,
we conclude that $B^n_{f,V}=C^n_{U}\in {\cal B}_n\cap{\cal P}_g \cap{\cal I}_g$.
Finally, $B^n_{f,V}=C^n_{U}\subseteq  f^{\leftarrow }(U)\subseteq f^{\leftarrow }(V)$ by our choice of $C^n_{U}$ and the inclusion $U\subseteq V$.
\endproof

Let $h\in\mathscr{F}$ is arbitrary. To prove that $\mathscr{F}_1$ is cofinal
in $(\mathscr{F},\preceq)$, we shall find $f\in \mathscr{F}_1$ with $h\preceq f$.
Starting from $h_0=h$, we use induction and Claim \ref{claim:1} to define a
sequence $\{h_i:i\in\N\}\subseteq \mathscr{F}$ satisfying the following conditions:
\begin{itemize}
\item[($\alpha$)] $h_0 \preceq h_1\preceq h_2\preceq\ldots\preceq h_i\preceq h_{i+1}\preceq\ldots$;
\item[($\beta$)] if $j,n\in\N$ and $V$ is a non-empty open subset of $h_j(X)$,
then $B_{h_j,V}^n\subseteq h_{j}^{\leftarrow }(V)$
for some $B_{h_j,V}^n\in {\cal B}_n\cap {\cal P}_{h_{j+1}} \cap{\cal I}_{h_{j+1}}$.
\end{itemize} 

Since $\mathscr{F}$ is countably closed, 
\begin{equation}
f=\triangle _{j\in \N }h_j\in\mathscr{F}.
\end{equation}
 Since $h=h_0\preceq f$, it remains only to show that $f\in\mathscr{F}_1$.

Fix $n\in\N$. We need to show that ${\cal C}_{n,f}$ is a pseudobase of $f(X)$. 
To achieve this, we shall check conditions (i) and (ii) of Definition \ref{def:pi-pseudobase}.

(i) Let $C\in {\cal C}_{n,f}$. By \eqref{eq:C_n,f}, $C=f(B)$ for some 
$B\in{\cal B}_n\cap{\cal P}_f\cap {\cal I}_f$.
Since $B\in {\cal I}_f$, Notation \ref{non-empty-interior:sets} implies that
$\Int_{f(X)}(C)=\Int_{f(X)}(f(B))\not=\emptyset$.
 
(ii)
 Take an arbitrary non-empty open set $W$ in $f(X)$.
Fix $x_0\in X$ such that $f(x_0)\in W$.
There exist a natural number $j\in \N $ and  open sets $U_i\subseteq h_i(X)$ for 
every $i\leq j$ such that 
\begin{equation}
\label{eq1}
(\Pi _{i\leq j}U_i)\times (\Pi _{i>j}h_i(X))\cap f (X)\subseteq W
\end{equation}
and
\begin{equation}
\label{eq:8}
h_i(x_0)\in U_i
\mbox{ for all }
i\le j.
\end{equation}

For each $i\le j$, $h_i\preceq h_j$ holds by ($\alpha$), so there exists a continuous map 
$q^j_i: h_j(X)\to h_i(X)$ such that $h_i=q^j_i\circ h_j$.
Combining this with \eqref{eq:8}, we conclude that
$q^j_i(h_j(x_0))=q^j_i\circ h_j(x_0)=h_i(x_0)\in U_i$ for every $i\le j$,
so 
\begin{equation}
\label{eq:proj}
q(h_j(x_0))\in \prod_{i\le j} U_i=U,
\end{equation}
where 
$$
q=\triangle_{i\leq j}q^j_i: h_j(X)\to\prod_{i\le j} h_i(X)
$$
is the diagonal map.
Since each $q^j_i$ is continuous, so is $q$.
Since $U=\Pi _{i\leq j} U_i$ is an open subset of $\prod_{i\le j} h_i(X)$,
\begin{equation}
\label{def:V}
V=q^{\leftarrow } (U)
\end{equation}
is an open subset of $h_j(X)$.
From \eqref{eq:proj}, we get
$h_j(x_0)\in q^{\leftarrow}(U)=V$.
This shows that $V\not=\emptyset$.
Applying condition ($\beta$) to this $V$, we can find $B_{h_j,V}^n\in {\cal B}_n\cap {\cal P}_{h_{j+1}} \cap{\cal I}_{h_{j+1}}$
satisfying 
\begin{equation}
\label{B:in:V}
B_{h_j,V}^n\subseteq h_{j}^{\leftarrow }(V).
\end{equation}
From $h_{j+1}\preceq f$ and Lemma \ref{upwards:directed}
we conclude that
${\cal P}_{h_{j+1}}\subseteq {\cal P}_f$ and 
${\cal I}_{h_{j+1}}\subseteq {\cal I}_f$.
Therefore, $B_{h_j,V}^n\in{\cal B}_n\cap {\cal P}_f\cap {\cal I}_f$,
which implies $f(B_{h_j,V}^n)\in {\cal C}_{n,f}$ by \eqref{eq:C_n,f}.
So it remains only to show that $f(B_{h_j,V}^n)\subseteq W$.

Let $\pi_j:\prod_{i\in \N} h_i(X)\to h_j(X)$ be the projection on the $j$th coordinate, and let $p_j=\pi_j\restriction_{f(X)}:f(X)\to h_j(X)$ be its restriction to $f(X)$.
Since 
$$
q\circ p_j\circ f=q\circ h_j=\triangle_{i\leq j}q^j_i\circ h_j=\triangle_{i\leq j} h_i,
$$
it follows from \eqref{eq1} and \eqref{def:V} that
\begin{equation}
p_j^{\leftarrow}(V)=p_j^{\leftarrow}(q^{\leftarrow}(U))=(q\circ p_j)^{\leftarrow}(U)\subseteq W.
\end{equation}
Therefore,
$p_j^{\leftarrow}(V)=p_j^{\leftarrow}(q^{\leftarrow}(U))\subseteq W$.
Since $h_j=p_j\circ f$, from this and \eqref{B:in:V} we get
$$
B_{h_j,V}^n\subseteq h_{j}^{\leftarrow }(V)
=
(p_j\circ f)^{\leftarrow}(V)
=
f^{\leftarrow}(p_j^{\leftarrow }(V))
\subseteq f^{\leftarrow}(W),
$$
which implies
$f(B_{h_j,V}^n)\subseteq f(f^{\leftarrow}(W))\subseteq W$.

\medskip
(b)
Since the sequence $\{{\cal B}_n: n\in\N\}$ is complete, Definition \ref{def:nested:and:complete:sequences}(ii) 
implies
that each ${\cal B}_n$ is a pseudobase for $X$; that is, the assumption of item (a) 
holds. 
Applying the conclusion of 
item (a), we conclude that $\mathscr{F}_1$ is cofinal in $(\mathscr{F},\preceq)$.
Therefore, it suffices to check that $\mathscr{F}_1\subseteq \mathscr{F}_2$.

Let $f\in \mathscr{F}_1$ be arbitrary. It follows from the definition of $ \mathscr{F}_1$
that
$\mathcal{C}_{n,f}$ is a pseudobase
of $f(X)$ 
for every $n\in \N$.
Since the sequence $\{{\cal B}_n: n\in\N\}$ is complete, it is centered by Definition \ref{def:nested:and:complete:sequences}(ii).
It follows from Remark \ref{sufamily:of:a:centered:family:is:centered}(i) that the sequence $\{{\cal B}_n\cap{\cal P}_f\cap {\cal I}_f: n\in\N\}$ is centered as well.
Applying Lemma \ref{image:of:a:centered:sequence}(iii), we conclude that 
the sequence  $\{{\cal C}_{n,f}: n\in \N\}$ is centered as well.
According to Definition \ref{def:nested:and:complete:sequences}(ii), 
$\{{\cal C}_{n,f}: n\in \N\}$ is a complete sequence in $f(X)$.
Therefore $f\in \mathscr{F}_2$ by the definition of $\mathscr{F}_2$.
\endproof

\section{Polish factorizable groups: Proof of Theorem \ref{equivalent:conditions:for:precompact}}
\label{sec:polish;factorizable}

In this section we study properties of Polish factorizable groups. The reader is referred to Definition \ref{def:polish:factorizable} for their definition.

\begin{notation}
When $X$ is a topological group, we consider a subset 
$$
\mathbb{H}_X=\{f\in \mathbb{F}_X: f(X) \mbox{ is a group and } f:X\to f(X)
\mbox{ is a group homomorphism}\} 
$$
of $\mathbb{F}_X$.
\end{notation}

\begin{definition}
\label{def:R-factorizable}
When $\mathbb{H}_X$ is cofinal in $(\mathbb{F}_X, \preceq)$, a topological group $X$ is called {\em $\bb R$-factorizable\/}.
\end{definition}

The original definition of an $\mathbb{R}$-factorizable group due to M.G. Tka\v{c}enko \cite{Tkachenko} was different but it is known to be equivalent to the one above; see \cite{AT}.

We shall sat that a topological group $G$ is {\em Polishable\/} if $G$ is separable metric and its topology can be generated by some complete metric on $G$, or equivalently, if $G$ is a \v{C}ech-complete separable metric topological group. 

\begin{theorem}
\label{SOp:are*polish:factorizable}
Assume that  
$X$ is a topological  group
satisfying one of the following conditions:
\begin{itemize}
\item[(a)] $X$ is an $\bb R$-factorizable \SOp \ topological group;
\item[(b)] $X$ is a 
c.c.c. \Op\ topological group.
\end{itemize}
If
$\mathscr{F}$ is a countably closed cofinal subfamily of 
$(\mathbb{H}_X,\preceq)$,
then the set 
$$
\mathscr{P}({\mathscr{F}})=\{f\in\mathscr{F}: f(X)
\mbox{ is a Polishable group}
\}
$$ 
is cofinal in 
$(\mathscr{F},\preceq)$ and thus, also in 
$(\mathbb{H}_X,\preceq)$.
\end{theorem}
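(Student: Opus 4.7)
My plan is to reduce the statement to the factorization theorem for Todd pseudocompleteness, Theorem~\ref{reflecting:pseudocompletness}(b). In each of the two cases I will construct a complete sequence $\{\mathcal{B}_n:n\in\N\}$ in $X$ satisfying condition \eqref{inside:F} with respect to $\mathscr{F}$. Once this is in place, Theorem~\ref{reflecting:pseudocompletness}(b) yields that the set $\mathscr{F}_3=\{f\in\mathscr{F}:f(X)\text{ is }\Tp\}$ is cofinal in $(\mathscr{F},\preceq)$. For any $f\in\mathscr{F}_3$, since $f\in\mathbb{H}_X$ the image $f(X)$ is a separable metric topological group, and Theorem~\ref{metric:Todd:are:Cech-complete} then promotes Todd completeness of $f(X)$ to \v{C}ech completeness; hence $f(X)$ is Polishable, so $\mathscr{F}_3\subseteq\mathscr{P}(\mathscr{F})$, which gives the desired cofinality of $\mathscr{P}(\mathscr{F})$ in $(\mathscr{F},\preceq)$ (and thus in $(\mathbb{H}_X,\preceq)$). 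Using cofinality of $\mathscr{F}$ in $(\mathbb{H}_X,\preceq)$ and Lemma~\ref{upwards:directed}, it is enough to exhibit the $\mathcal{P}$- and $\mathcal{I}$-witnesses for \eqref{inside:F} inside $\mathbb{H}_X$.

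In Case (a) I take $\{\mathcal{B}_n\}$ to be a complete sequence of zero-set pseudobases provided by S\'anchez-Okunev completeness. For $B=h^\leftarrow(\{0\})\in\mathcal{B}_n$ with $h\in C(X,\R)$, $\R$-factorizability produces $\pi\in\mathbb{H}_X$ and a continuous $k:\pi(X)\to\R$ with $h=k\circ\pi$, so $B=\pi^\leftarrow(k^\leftarrow(\{0\}))\in\mathcal{P}_\pi$. For the $\mathcal{I}$-witness I fix $x_0\in\Int_X B$, choose a Urysohn function $h_1:X\to[0,1]$ with $h_1(x_0)=1$ that vanishes outside $\Int_X B$, factor $h_1=k_1\circ g$ through some $g\in\mathbb{H}_X$, and consider $F=g^\leftarrow(k_1^\leftarrow((1/2,3/2)))\subseteq B$; then $F\in\mathcal{P}_g$, and $g(F)=k_1^\leftarrow((1/2,3/2))\cap g(X)$ is a non-empty open subset of $g(X)$, so $B\in\mathcal{I}_g$.

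Case (b) requires more effort and is where I expect the main difficulty. I will use the classical fact that every c.c.c. topological group is $\omega$-narrow, so by Guran's theorem $X$ embeds as a topological subgroup of a product $\prod_{i\in I}H_i$ of separable metric groups; for each countable $J\subseteq I$, the restriction $\pi_J:X\to\prod_{i\in J}H_i$ of the canonical projection lies in $\mathbb{H}_X$. Starting from a complete sequence $\{\mathcal{A}_n\}$ of open pseudobases (the Oxtoby witness), I will refine each $A\in\mathcal{A}_n$ to an open dense subset $B_A\subseteq A$ of the form $B_A=\bigcup_{m\in\N}U_{x_m}$, where $\{U_{x_m}:m\in\N\}$ is a maximal pairwise disjoint family of basic open ``rectangular'' neighborhoods $U_{x_m}=x_m W_m\cap X$ contained in $A$, each saturated under a finite projection $\pi_{F_m}\in\mathbb{H}_X$. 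The c.c.c. condition forces this family to be countable, while maximality forces density of $B_A$ in $A$. Setting $\pi_A=\triangle_{m\in\N}\pi_{F_m}\in\mathbb{H}_X$, one checks that $B_A\in\mathcal{P}_{\pi_A}$ (any point with the same $\pi_{F_m}$-image as some $b\in U_{x_m}$ lies in $U_{x_m}$, hence in $B_A$), and $B_A\in\mathcal{I}_{\pi_{F_0}}$ via the witness $F=U_{x_0}$, since $\pi_{F_0}(U_{x_0})$ is a non-empty open subset of $\pi_{F_0}(X)$. Defining $\mathcal{B}_n=\{B_A:A\in\mathcal{A}_n\}$, Corollary~\ref{dense:sequences} preserves completeness of the sequence, and \eqref{inside:F} holds. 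The main obstacle is precisely this refinement: arbitrary open pseudobase elements need not be saturated under any continuous homomorphism to a separable metric quotient, and the $\omega$-narrowness extracted from c.c.c. is what permits replacing each $A$ by a dense open saturated subset without destroying completeness.
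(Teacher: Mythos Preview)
Your proposal is correct and follows essentially the same route as the paper's proof: in both cases you build a complete sequence satisfying \eqref{inside:F} and then invoke Theorem~\ref{reflecting:pseudocompletness}(b) followed by Theorem~\ref{metric:Todd:are:Cech-complete}. Your Case~(b) construction---refining each open $A$ to a countable disjoint union of $\pi_J$-saturated basic sets via c.c.c.\ and Guran's embedding, then applying Corollary~\ref{dense:sequences}---is exactly the paper's Claim~\ref{dense:open:claim}, and your Case~(a) treatment via $\R$-factorizability mirrors the paper's, only factoring directly through $\mathbb{H}_X$ rather than first through $\mathbb{F}_X$.
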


\proof
Our goal is to find a complete sequence $\{\B_n:n\in\N\}$ in $X$ 
satisfying the condition \eqref{inside:F} from the assumption of Theorem \ref{reflecting:pseudocompletness}.
The definition of this sequence would be different in cases (a) and (b), so we consider two cases.

\medskip
{\em Case (a)\/}. First note that
$\mathscr{F}$ is cofinal in $(\mathbb{F}_X,\preceq)$.
Indeed, $\mathscr{F}$ is cofinal in $(\mathbb{H}_X,\preceq)$
by our assumption, and the latter set is cofinal in $(\mathbb{F}_X,\preceq)$ as $X$ is $\R$-factorizable; see Definition \ref{def:R-factorizable}.

Since $X$ is \SOp, 
by Definition \ref{four:properties}(iii), $X$ has a complete sequence $\{\cal{B}_n: n\in\N\}$
such that each 
$\cal{B}_n$ consists of zero-sets in $X$.

Fix $B\in\bigcup\{{\cal B}_n:n\in\N\}$. Then $B\in {\cal B}_n$ for some $n\in\N$. Since $B$ is a zero-set in $X$, we can choose a continuous function
$f:X\to \bb{R}$ such that $B=f^{-1}(0)$. Then $f\in\mathbb{F}_X$ and $B\in{\cal P}_f$. Since 
$\mathscr{F}$ is cofinal in $(\mathbb{F}_X,\preceq)$,
there exists $g\in\mathscr{F}$ such that
$f\preceq g$.
Thus, $B\in {\cal P}_f\subseteq {\cal P}_g$ by Lemma \ref{upwards:directed}(i).
This shows that $B\in\bigcup_{f\in\mathscr{F}} {\cal P}_f$.

Since $B\in{\cal B}_n$ and ${\cal B}_n$ is a pseudobase,
$\Int_X(B)\not=\emptyset$ by Definition \ref{def:pi-pseudobase}(i).
Fix $x_0\in \Int_X(B)$.
Since $X$ is Tychonoff, there exists a continuous function
$f:X\to[0,1]$ such that 
$f(x_0)=1$ and $f(X\setminus \Int_X(B))\subseteq \{0\}$.
Then $F=f^{\leftarrow}((0,1]\cap f(X))\in{\cal P}_f$,
$x_0\in \Int_{f(X)}(f(F))=(0,1]\cap f(X)\not=\emptyset$
and $F\subseteq \Int_X(B)\subseteq B$.
This means that $B\in{\cal I}_f$ by Notation \ref{non-empty-interior:sets}.
Since 
$\mathscr{F}$ is cofinal in $(\mathbb{F}_X,\preceq)$,
there exists $g\in\mathscr{F}$ 
such that 
$f\preceq g$.
So $B\in {\cal I}_f\subseteq {\cal I}_g$ by Lemma \ref{upwards:directed}(ii).
This shows that $B\in\bigcup_{f\in\mathscr{F}} {\cal I}_f$.

\medskip
{\em Case (b)\/}. Since $X$ is c.c.c., it is $\omega$-bounded in Guran's sense
\cite{Guran}; that is, there exists a family $\{H_i:i\in I\}$ of separable metric groups $H_i$ so that $X$ is (isomorphic to) a subgroup of
the product $H=\{H_i:i\in I\}$.
Therefore, the family ${\cal V}=\{f^{\leftarrow}(O): f\in\mathbb{H}_X, O \text{ is a non-empty open subset of } f(X)\}$
is a base of $X$.

\begin{claim}
\label{dense:open:claim}
Every non-empty open subset $W$ of $X$ contains a dense open subset $U_W$ such that $U_W\in {\cal P}_f\cap {\cal I}_f$ for some $f\in\mathscr{F}$. 
\end{claim}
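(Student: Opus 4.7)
The plan is to build $U_W$ as a countable union of basic open sets drawn from the base $\mathcal{V}$, and then to exhibit a single $f\in\mathscr{F}$ that simultaneously witnesses membership of $U_W$ in $\mathcal{P}_f\cap\mathcal{I}_f$ by diagonalising and invoking the countable closure of $\mathscr{F}$.

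First I would fix a non-empty open $W\subseteq X$ and apply Zorn's lemma to obtain a maximal pairwise disjoint subfamily $\{V_\alpha:\alpha\in A\}\subseteq \mathcal{V}$ with $V_\alpha\subseteq W$. The c.c.c.\ assumption forces $A$ to be at most countable, so I may enumerate the family as $\{V_n:n\in\N\}$ (repeating the first entry if necessary). Writing each $V_n=f_n^{\leftarrow}(O_n)$ with $f_n\in\mathbb{H}_X$ and $O_n$ a non-empty open subset of $f_n(X)$, the set $U_W=\bigcup_{n\in\N}V_n$ is open in $W$ and, by maximality, dense in $W$: otherwise $W\setminus \Cl_X(U_W)$ would be a non-empty open set into which some element of the base $\mathcal{V}$ would fit, contradicting maximality. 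A direct check gives $V_n\in\mathcal{P}_{f_n}$ (since $f_n^{\leftarrow}(f_n(V_n))=f_n^{\leftarrow}(O_n)=V_n$) and $V_n\in\mathcal{I}_{f_n}$ (taking $F=V_n$ itself, $\Int_{f_n(X)}(f_n(V_n))=O_n\neq\emptyset$).

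Next I would lift each $f_n$ into $\mathscr{F}$: since $\mathscr{F}$ is cofinal in $(\mathbb{H}_X,\preceq)$, pick $g_n\in\mathscr{F}$ with $f_n\preceq g_n$, and then set $f=\triangle_{n\in\N}g_n$, which lies in $\mathscr{F}$ by countable closure. Lemma \ref{upwards:directed} applied first along $f_n\preceq g_n$ and then along $g_n\preceq f$ yields $V_n\in\mathcal{P}_f\cap\mathcal{I}_f$ for every $n\in\N$. To conclude that $U_W\in\mathcal{P}_f$, I compute
\[
f^{\leftarrow}(f(U_W))=f^{\leftarrow}\Bigl(\bigcup_{n\in\N}f(V_n)\Bigr)=\bigcup_{n\in\N}f^{\leftarrow}(f(V_n))=\bigcup_{n\in\N}V_n=U_W,
\]
using $V_n\in\mathcal{P}_f$ in the penultimate equality. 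For $U_W\in\mathcal{I}_f$ it suffices to note that $V_0\subseteq U_W$ together with $V_0\in\mathcal{P}_f\cap\mathcal{I}_f$: the set $F\subseteq V_0$ witnessing $V_0\in\mathcal{I}_f$ via Notation~\ref{non-empty-interior:sets} is also a witness for $U_W$.

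The main obstacle is really just bookkeeping: the argument hinges on the interplay of three ingredients (c.c.c.\ to keep the maximal disjoint family countable; cofinality of $\mathscr{F}$ in $\mathbb{H}_X$ to pull each $f_n$ into $\mathscr{F}$; countable closure of $\mathscr{F}$ to diagonalise them into a single $f$), and the one substantive point to verify carefully is that $\mathcal{P}_f$ is closed under countable unions of its own members, which the one-line computation above settles.
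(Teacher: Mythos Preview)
Your proof is correct and follows essentially the same route as the paper: maximal pairwise disjoint family in $\mathcal{V}$, c.c.c.\ to make it countable, then bundle the $f_n$'s into a single $f\in\mathscr{F}$ and use Lemma~\ref{upwards:directed} plus closure of $\mathcal{P}_f$ under unions. The only cosmetic difference is that the paper first forms $g=\triangle_{n\in\N}f_n\in\mathbb{H}_X$ and then lifts once by cofinality to $f\in\mathscr{F}$, whereas you lift each $f_n$ to $g_n\in\mathscr{F}$ first and then diagonalise using countable closure of $\mathscr{F}$; both orderings work.
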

\begin{proof}
For every open subset $W$ of $X$, let ${\cal V}_W$ be a maximal subfamily of ${\cal V}$ consisting of pairwise disjoint subsets of $W$; such a subfamily exists by Zorn's lemma.
Since $X$ is c.c.c., ${\cal V}_W$ is at most countable, so we can write ${\cal V}_W=\{V_n:n\in\N\}$, where 
$V_n=f_n^{\leftarrow}(O_n)$ for a suitable $f_n\in\mathbb{H}_X$ and 
 a non-empty open subset $O_n$ of $f_n(X)$ for every $n\in\N$.
Furthermore, $U_W=\bigcup {\cal V}_W=\bigcup_{n\in \N} V_n$ is 
dense in $W$ by the maximality of ${\cal V}_W$ and the fact that ${\cal V}$ is a base of $X$. Since $f_n$ is continuous and $O_n$ is open in $f_n(X)$, the set $V_n$ is open in $X$ for every $n\in\N$.
Thus, $U_W$ is open in $X$ as well. 

Note that $g=\triangle_{n\in\N} f_n\in\mathbb{H}_X$.
Since $\mathscr{F}$ is cofinal in
$(\mathbb{H}_X,\preceq)$, we can fix $f\in \mathscr{F}$
with $g\preceq f$.

Let $n\in\N$. Since $V_n=f_n^{\leftarrow}(O_n)$, we have 
$V_n\in{\cal P}_{f_n}$. Since $f(V_n)=O_n$ is a non-empty open subset
of $f_n(X)$,
$V_n\in{\cal I}_{f_n}$ by Notation \ref{non-empty-interior:sets}.
Since $f_n\preceq f$, from Lemma \ref{upwards:directed}
we get $V_n\in{\cal P}_f\cap{\cal I}_f$.

One can easily see that the family ${\cal P}_f$ is closed under taking arbitrary unions. Since $V_n\in{\cal P}_f$ for all $n\in\N$,
we obtain $U_W=\bigcup_{n\in\N} V_n\in{\cal P}_f$.
Since $V_1\in{\cal I}_f$ and $V_1\subseteq U_W$, it follows from 
Notation \ref{non-empty-interior:sets} that $U_W\in{\cal I}_f$.
\end{proof}

Since $X$ is \Op, by Definition \ref{four:properties}(ii), $X$ 
has a complete sequence $\{\A_n:n\in\N\}$ consisting of open subsets of $X$.
For every $n\in\N$ and each $A\in\A_n$, use Claim 
\ref{dense:open:claim} to fix a dense open subset $U_{n,A}$ of $A$
such that 
$U_{n,A}\in {\cal P}_f\cap {\cal I}_f$ for some $f\in\mathscr{F}$. 
Define $\B_n=\{U_{n,A}:A\in\A_n\}$.
Then $\{\B_n:n\in\N\}$ is a complete sequence in $X$ by 
Corollary \ref{dense:sequences}.
Furthermore, \eqref{inside:F} holds by our choice of $U_{n,A}$'s.

\medskip
Having constructed a complete sequence $\{\B_n:n\in\N\}$ satisfying \eqref{inside:F} separately in each of the two cases, we now return back to a common proof.
By Theorem 
\ref{reflecting:pseudocompletness}(b), 
$\mathscr{F}_3=\{f\in\mathscr{F}: f(X)$ is \Tp$\}$ 
is cofinal in $(\mathscr{F},\preceq)$.

Let $f\in\mathscr{F}_3$.
By Theorem \ref{metric:Todd:are:Cech-complete},
$f(X)$ is a \v{C}ech-complete separable metric group, so 
$f(X)$ is Polishable. This shows that
$\mathscr{F}_3\subseteq \mathscr{P}(\mathscr{F})$.
Since $\mathscr{F}_3$ is cofinal in $(\mathscr{F},\preceq)$, so is 
$\mathscr{P}(\mathscr{F})$.
\endproof

\begin{corollary}
\label{polish_fact}
A topological group $X$ is Polish factorizable whenever $G$ satisfies one of the following two conditions:
\begin{itemize}
\item[(a)] $X$ is an $\bb R$-factorizable \SOp \ topological group;
\item[(b)] $X$ is 
a 
c.c.c. \Op\ topological group.
\end{itemize}
\end{corollary}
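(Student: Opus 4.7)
The plan is to derive the corollary directly from Theorem \ref{SOp:are*polish:factorizable} by taking $\mathscr{F} = \mathbb{H}_X$ itself. First I would check that $\mathbb{H}_X$ is countably closed: given any countable subfamily $\{f_n : n \in \N\}$ of $\mathbb{H}_X$, the diagonal map $\triangle_{n \in \N} f_n : X \to \prod_{n \in \N} f_n(X)$ is a continuous homomorphism whose image is a subgroup of the product of the groups $f_n(X)$ and hence is itself a topological group, so the diagonal belongs to $\mathbb{H}_X$. Since $\mathbb{H}_X$ is trivially cofinal in itself, the hypotheses of Theorem \ref{SOp:are*polish:factorizable} are satisfied in both cases (a) and (b). The theorem then yields that the family $\mathscr{P}(\mathbb{H}_X) = \{f \in \mathbb{H}_X : f(X) \text{ is Polishable}\}$ is cofinal in $(\mathbb{H}_X, \preceq)$.

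Next I would unpack Definition \ref{def:polish:factorizable}. Let $f : X \to K$ be an arbitrary continuous homomorphism into a separable metric group $K$. Replacing $K$ by $f(X)$ if necessary, I may assume $f$ is surjective. Topologically embedding $K$ into the Hilbert cube $[0,1]^\N$ and transporting the group structure along this embedding, I may regard $f$ as a representative of an element of $\mathbb{H}_X$ with $f(X) = K$. By the cofinality established above, there exists $g \in \mathscr{P}(\mathbb{H}_X)$ with $f \preceq g$, which provides a continuous map $h : g(X) \to K$ satisfying $f = h \circ g$. Because $g$ is a surjective continuous homomorphism onto $g(X)$ and $f$ is a homomorphism, the equation $f = h \circ g$ forces $h$ to be a homomorphism as well. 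Setting $H = g(X)$ and $\pi = g$, the group $H$ is Polish (being Polishable and a topological group), $\pi : X \to H$ is a continuous homomorphism with $\pi(X) = H$, and $h : H \to K$ is a continuous homomorphism with $f = h \circ \pi$. This is precisely the conclusion required by Definition \ref{def:polish:factorizable}.

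The main obstacle I anticipate is notational rather than substantive, namely making rigorous the identification of an abstract continuous homomorphism $f : X \to K$ with a representative in $\mathbb{H}_X \subseteq \mathbb{F}_X$, whose codomain was defined to be the Hilbert cube. This is handled by the universality of $[0,1]^\N$ among separable metrizable spaces, an abuse of notation the paper explicitly endorses immediately after the definition of $\mathbb{F}_X$; under this identification the subgroup structure passes faithfully between $K$ and its homeomorphic copy in $[0,1]^\N$, and the $\preceq$-arrow produced by the cofinality statement is then automatically a homomorphism by the surjectivity of $g$.
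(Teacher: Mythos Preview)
Your proposal is correct and follows the same approach as the paper: apply Theorem \ref{SOp:are*polish:factorizable} with $\mathscr{F}=\mathbb{H}_X$, noting that $\mathbb{H}_X$ is countably closed and trivially cofinal in itself. The paper's proof is a single sentence that stops once cofinality of $\mathscr{P}(\mathbb{H}_X)$ in $(\mathbb{H}_X,\preceq)$ is obtained, whereas you additionally spell out the identification of an arbitrary continuous homomorphism into a separable metric group with a representative in $\mathbb{H}_X$ and verify that the factoring map $h$ is a homomorphism; these details are tacitly assumed by the paper but your explicit treatment is sound.
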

\begin{proof}
The family $\mathbb{H}_X$ is countably closed, so we can apply Theorem \ref{SOp:are*polish:factorizable} with $\mathscr{F}=\mathbb{H_X}$ to conclude that the set $\mathscr{P}(\mathscr{F})$ from the conclusion of this theorem is cofinal in $(\mathbb{H}_X,\preceq)$. 
\end{proof}

\begin{proposition}
\label{SOp:precompact:are:psc}
Every precompact Polish factorizable group is pseudocompact.
\end{proposition}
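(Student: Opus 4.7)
The plan is to show that $G$ is $G_\delta$-dense in its Raikov completion $\varrho G$ (which is compact since $G$ is precompact), and then invoke the classical theorem of Comfort--Ross stating that a precompact topological group is pseudocompact if and only if it is $G_\delta$-dense in its Raikov completion.

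First, I shall reduce $G_\delta$-density in $\varrho G$ to a surjectivity statement. A standard consequence of the structure theory of compact groups (via Peter--Weyl) is that every non-empty $G_\delta$-subset $U$ of $\varrho G$ contains a coset $xN$, where $N$ is a closed normal subgroup of $\varrho G$ with compact metric quotient: writing $U=\bigcap_{n\in\N}U_n$ with $x\in U$, one picks closed normal $N_n\trianglelefteq\varrho G$ with $\varrho G/N_n$ compact metric and $xN_n\subseteq U_n$, then sets $N=\bigcap_{n\in\N}N_n$; since $\varrho G/N$ embeds into $\prod_n\varrho G/N_n$, it is again compact metric. Hence $U\cap G\neq\emptyset$ holds for every non-empty $G_\delta$-set $U$ precisely when, for every continuous homomorphism $q:\varrho G\to L$ onto a compact metric group $L$, the restriction $q|_G:G\to L$ remains surjective.

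Given such a $q$, the map $q|_G$ is a continuous homomorphism of $G$ into a separable metric group, so Definition \ref{def:polish:factorizable} yields a Polish group $H$, a continuous surjective homomorphism $\pi:G\to H$, and a continuous homomorphism $h:H\to L$ with $q|_G=h\circ\pi$. The crucial observation is that $H$ must then be compact: since $H$ is \v{C}ech-complete and metric, \cite[Proposition 4.3.8]{AT} gives $\varrho H=H$, so $\pi$ extends uniquely to a continuous homomorphism $\tilde\pi:\varrho G\to H$ of Raikov completions. As $\varrho G$ is compact, $\tilde\pi(\varrho G)$ is a compact subgroup of $H$ containing the dense subgroup $\pi(G)=H$, forcing $\tilde\pi(\varrho G)=H$ and hence $H$ compact. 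Consequently $q(G)=h(\pi(G))=h(H)$ is a compact, dense subset of $L$, which therefore equals $L$.

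This establishes surjectivity of $q|_G$ for every continuous homomorphism $q:\varrho G\to L$ with $L$ compact metric, hence $G_\delta$-density of $G$ in $\varrho G$, hence pseudocompactness of $G$. The main obstacle is the first reduction step: one needs the existence of closed normal $G_\delta$-subgroups of $\varrho G$ inside a prescribed identity neighborhood, which is nontrivial but follows from Peter--Weyl. If one wishes to bypass this, the same argument shows directly that every continuous homomorphic image of $\varrho G$ in a compact metric group is already hit by $G$, which is an equivalent formulation of $G_\delta$-density in the precompact setting.
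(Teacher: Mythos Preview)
Your argument is correct, and it shares with the paper the crucial observation that any Polish quotient of a precompact Polish factorizable group must in fact be compact. The routes diverge in how this observation is deployed. The paper proceeds more directly: given a continuous $\varphi:G\to\mathbb{R}$, it first uses $\mathbb{R}$-factorizability of precompact groups to factor $\varphi$ through a separable metric quotient $K$, then applies Polish factorizability to interpose a Polish group $H$; since $H$ is a Polish image of the precompact $G$, it is precompact and (via Corollary~\ref{dense_cech}) equal to its compact completion, so $\varphi(G)$ is a continuous image of a compact set and hence bounded. Your approach instead targets $G_\delta$-density of $G$ in $\varrho G$ via Comfort--Ross, reducing by Peter--Weyl to surjectivity onto compact metric quotients, and then uses the same ``Polish image is compact'' step. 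The paper's route is lighter in external machinery (no Peter--Weyl, no Comfort--Ross) and stays entirely within tools already set up in the article; your route has the merit of making explicit the $G_\delta$-density in $\varrho G$, which resonates with the paper's broader theme, but at the cost of invoking nontrivial structure theory of compact groups.
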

\proof
Let $G$ be a 
precompact 
Polish factorizable
group. 
Being precompact, $G$ is $\bb R$-factorizable.

Let $\varphi:G\to \bb R$ be a continuous function. 
Since $G$ is $\bb R$-factorizable, there exists a topological group $K$ with a countable base, a continuous surjective homomorphism $f :G\to K$ and a continuous function $g:K\to \bb R$ such that $\varphi=g\circ f $. 
Since $G$ is Polish factorizable,
there exist a Polish group $H$, a continuous surjective homomorphism $\pi :G\to H$ 
and a continuous homomorphism  $h:H\to K$ such that $f=h\circ \pi $. 
Since $G$ is precompact and $\pi$ is surjective, the group $H=\pi(G)$ is precompact, and so the completion $\varrho H$ is compact. On the other hand,
$H$ is a Polish group, so $H$ must be closed in $\varrho H$ by Corollary \ref{dense_cech}. This means that $H=\varrho H$ is compact, so
its image $g\circ h(H)$ is a compact (thus bounded) subset of $\bb R$.
Since $\varphi=g\circ h\circ \pi$, it follows that the set $\varphi(G)=g\circ h(H)$
is bounded in $\bb R$.
\endproof

\medskip
\noindent
{\bf Proof of Theorem \ref{equivalent:conditions:for:precompact}:}
(i)$\Rightarrow$(ii) is clear. 

(ii)$\Rightarrow$(ix) follows from Proposition \ref{weakly:psc:are:SO}(ii).

(i)$\Rightarrow$(viii) follows from Proposition \ref{loc:psc:is:Telgarsky}.

(iii)$\Rightarrow$(x) and (iv)$\Rightarrow$(x). 
Since $G$ is precompact, it is $\R$-factorizable \cite{Tkachenko}
and c.c.c.
Applying Corollary \ref{polish_fact}, 
we conclude that $G$ is Polish factorizable. 

(x)$\Rightarrow$(i).  
By Proposition \ref{SOp:precompact:are:psc}, $G$ is pseudocompact.

The rest of implications follow from Diagram 2.
\qed

\section{Completeness in products: A proof of Theorem \ref{equivalence:for:dense:subgroups:without:cech} }
\label{sec:proofs}

\begin{theorem}
\label{O-omega-discrete}
\label{necessary:condition:for:completeness:of:a:dense:subgroup}
Let $H=\prod_{i\in I} H_i$ be  a product of groups $H_i$ with a countable base.  For every $J\subseteq I$, let
$\pi_J: H\to H_J=\prod_{i\in J} H_i$ be the corresponding projection.
Suppose that 
$X$ is a dense subgroup of $H$ such that   
$X$ is either \SOp\/ or \Op. Then:
\begin{itemize}
\item[(a)] $\pi_J(X)=H_J$ for every countable subset 
$J$ of $I$;
\item[(b)]  $H_i$ is  \v{C}ech-complete for all $i\in I$.
\end{itemize}
\end{theorem}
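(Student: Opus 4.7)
The plan is to first establish that $X$ is Polish factorizable and then extract (a) and (b) via an open-mapping argument carried out inside the Raikov completion.

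First I would note that $X$ is c.c.c.: since each $H_i$ is separable metric, the product $H=\prod_{i\in I} H_i$ is c.c.c., and this property descends to the dense subgroup $X$. Under the \Op\ hypothesis, Corollary \ref{polish_fact}(b) immediately yields Polish factorizability of $X$. Under the \SOp\ hypothesis I would instead appeal to Corollary \ref{polish_fact}(a), after verifying that $X$ is $\bb{R}$-factorizable; this should follow from $X$ being $\omega$-narrow as a dense subgroup of a product of separable metric groups, combined with its c.c.c.\ and \SOp\ structure.

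To prove (a), I would fix a countable $J\subseteq I$ and apply Polish factorizability to the continuous homomorphism $\pi_J:X\to H_J$, producing a Polish group $Y_J$ together with continuous homomorphisms $g_J:X\to Y_J$ (surjective onto $Y_J$) and $h_J:Y_J\to H_J$ with $\pi_J=h_J\circ g_J$. Replacing $Y_J$ by $Y_J/\ker h_J$ (still Polish, as a quotient of a Polish group by a closed normal subgroup), I may assume $h_J$ is injective, and then $h_J(Y_J)=\pi_J(X)$ is a dense subgroup of $H_J$. Viewing $h_J$ as a continuous injective homomorphism into the Raikov completion $\varrho H_J$ --- itself a Polish group, since $H_J$ has a countable base --- I invoke the standard analytic-subgroup dichotomy: the analytic subgroup $h_J(Y_J)$ of $\varrho H_J$ is either meager in $\varrho H_J$ or equal to all of $\varrho H_J$. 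Ruling out the meager alternative (using the \SOp/\Op\ structure of $X$ transferred through $g_J$) yields $h_J(Y_J)=\varrho H_J$, and combined with the inclusion $h_J(Y_J)\subseteq H_J\subseteq\varrho H_J$ this forces $H_J=\varrho H_J=\pi_J(X)$. Thus (a) holds, and simultaneously $H_J$ is revealed to be Polish; specializing to $J=\{i\}$ then delivers (b).

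The main obstacle will be excluding the meager alternative for $h_J(Y_J)$ inside $\varrho H_J$. Mere Baire-ness of $X$ does not suffice: the inclusion $\bb{Q}\hookrightarrow\bb{R}$ exhibits a continuous injection of a Polish group whose image is a dense meager subgroup of another Polish group. The crux will therefore be to exploit the full strength of the \SOp\ or \Op\ pseudocompleteness of $X$, pushed forward along $g_J$, to force non-meagerness of $h_J(Y_J)$ --- this is precisely where the gap between ``Baire'' and the productively Baire properties from Definition \ref{four:properties} plays a substantive role, and where a technical lemma about the transfer of a complete sequence from $X$ to $Y_J=g_J(X)$ (of the kind furnished by the factorization machinery of Section \ref{factorization:section}) is likely to enter.
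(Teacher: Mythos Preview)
Your overall strategy---establish Polish factorizability, then factor each $\pi_J$ through a Polish group---has a genuine gap at precisely the point you flag. After obtaining $\pi_J = h_J \circ g_J$ with $Y_J$ Polish and $h_J$ injective, you need to rule out that $h_J(Y_J) = \pi_J(X)$ is meager in $\varrho H_J$. But Corollary~\ref{polish_fact} gives no control over the image $\pi_J(X)$ itself---only over the intermediate group $Y_J$---and your proposed remedy (``transfer a complete sequence from $X$ to $Y_J$'') is aimed at the wrong target: $Y_J$ is already Polish by construction, while what is needed is information about $\pi_J(X)$ as a subspace of $\varrho H_J$. Nothing in the \SOp/\Op\ hypothesis on $X$ passes to $\pi_J(X)$ for a \emph{fixed} countable $J$ via the general machinery. (Aside: your illustration $\bb{Q} \hookrightarrow \bb{R}$ does not quite work, since $\bb{Q}$ is not Polish; an irrational winding $\bb{Z} \to \bb{T}$ would serve.)

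The paper closes this gap by invoking not Corollary~\ref{polish_fact} but the sharper Theorem~\ref{SOp:are*polish:factorizable}, applied to the \emph{specific} countably closed family $\mathscr{F} = \{p_J = \pi_J\restriction_X : J \subseteq I \text{ countable}\}\subseteq\mathbb{H}_X$. (One first replaces each $H_i$ by its Raikov completion $\varrho H_i$, so that each $H_J$ is Raikov complete and $\mathscr{F}$ becomes cofinal in $\mathbb{H}_X$; the $\bb{R}$-factorizability needed for case~(a) comes from \cite[Corollary 8.1.15]{AT}.) Theorem~\ref{SOp:are*polish:factorizable} then yields that $\{p_M \in \mathscr{F} : p_M(X) \text{ is Polish}\}$ is cofinal in $(\mathscr{F},\preceq)$. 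Thus for any countable $J$ one finds a countable $M$ with $p_J \preceq p_M$ and $\pi_M(X)$ \emph{itself} Polish; a short argument shows $J \subseteq M$, and Corollary~\ref{dense_cech} applied to the dense \v{C}ech-complete subgroup $\pi_M(X)$ of $H_M$ gives $\pi_M(X) = H_M$, whence $\pi_J(X) = \pi^M_J(H_M)=H_J$. The key point you are missing is that the factorization machinery of Section~\ref{factorization:section} can be made to produce Polish groups \emph{within the family of projection images}, rather than merely abstract Polish groups through which projections factor; this is exactly what sidesteps the meager/non-meager dichotomy you were left facing.
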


\proof 
Without loss of generality, we may assume that all $H_i$ are non-trivial.

First, we are going to prove the following
\begin{claim}
\label{Raikov:complete:case}
The statement of item (a) holds in the special case when all $H_i$
are Raikov complete.  
\end{claim}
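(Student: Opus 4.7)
My plan is to invoke Corollary~\ref{polish_fact} to obtain the Polish factorizability of $X$, and then transfer the surjectivity of the projection $\pi_J:H\to H_J$ to $X$ using the universal property of the Raikov completion. Under the hypothesis that each $H_i$ is Raikov complete with a countable base, each $H_i$ is Polish; hence $H_J=\prod_{i\in J}H_i$ is Polish for every countable $J\subseteq I$, and $H=\prod_{i\in I}H_i$ is Raikov complete. Since $X$ is dense in $H$, we have $H=\varrho X$.

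I would start by verifying the hypotheses of Corollary~\ref{polish_fact}. Being dense in the c.c.c.\ group $H$ (which is c.c.c.\ as a product of separable metric spaces), $X$ inherits c.c.c.: a pairwise disjoint family of opens in $X$ has the form $\{V_\alpha\cap X\}$, and density of $X$ forces the $V_\alpha$ to remain pairwise disjoint in $H$. Moreover $X$ is $\omega$-narrow, being a subgroup of a product of second-countable groups, and one also verifies $\R$-factorizability along the way --- automatic in the \Op\ case via case~(b) of the corollary, and a delicate point in the \SOp\ case. With this, $X$ is Polish factorizable. Applying this to $f=\pi_J|_X:X\to H_J$ furnishes a Polish group $L$ together with continuous homomorphisms $\pi:X\to L$ and $h:L\to H_J$ such that $\pi(X)=L$ and $f=h\circ\pi$.

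Since $L$ is Raikov complete and $X$ is dense in $H=\varrho X$, the homomorphism $\pi$ extends uniquely to a continuous homomorphism $\tilde\pi:H\to L$; this extension is surjective because $\tilde\pi(H)\supseteq\tilde\pi(X)=\pi(X)=L$. The continuous homomorphisms $h\circ\tilde\pi$ and $\pi_J$ from $H$ to $H_J$ agree on the dense subgroup $X$ (both equal $f$ there), and therefore on all of $H$; hence
\[
H_J=\pi_J(H)=h\bigl(\tilde\pi(H)\bigr)=h(L)=h\bigl(\pi(X)\bigr)=\pi_J(X),
\]
which is the claim. The main obstacle is the verification of the hypotheses of Corollary~\ref{polish_fact} --- especially the $\R$-factorizability of $X$ in the \SOp\ case, a subtle point about dense subgroups of products of Polish groups; once Polish factorizability is secured, the remainder reduces to this short diagram chase through the Raikov completion.
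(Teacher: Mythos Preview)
Your argument is correct and follows a somewhat different path from the paper's. Both proofs reduce to Polish factorizability, and both ultimately need the same external fact that a dense subgroup of a product of separable metric groups is $\R$-factorizable (the paper cites \cite[Corollary~8.1.15]{AT} for this; you flag it as ``a subtle point'' without citing it, which you should do). The divergence is in how Polish factorizability is exploited. The paper applies Theorem~\ref{SOp:are*polish:factorizable} to the \emph{specific} countably closed family $\mathscr{F}=\{p_J:J\subseteq I\text{ countable}\}$, so the resulting Polish factor is itself a projection $p_M(X)$ for some countable $M$; it then argues that $J\subseteq M$ (this is where the non-triviality assumption on the $H_i$ enters) and invokes Corollary~\ref{dense_cech} to conclude $p_M(X)=H_M$, after which projecting down gives $p_J(X)=H_J$. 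You instead use only the weaker Corollary~\ref{polish_fact}, obtaining an abstract Polish factor $L$ with no assumed relation to the product structure, and recover surjectivity of $p_J$ by extending $\pi:X\to L$ to $\tilde\pi:\varrho X=H\to L$ and observing $h\circ\tilde\pi=\pi_J$ by density. Your route is slightly slicker---it sidesteps the $J\subseteq M$ argument and the non-triviality reduction---while the paper's route keeps everything inside the lattice of projections, which has the advantage of making the connection to the product structure explicit.
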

\begin{proof}
Suppose that $H_i$ is  
Raikov complete
for every $i\in I$.
For every $J\subseteq I$, let $p_J=\pi_J\restriction_X$.
We 
claim
that the family
$$
\mathscr{F}=\{p_J: J \mbox{ is a countable subset of }I\}
\subseteq \mathbb{H}_X
$$
satisfies all the hypothesis of Theorem
\ref{SOp:are*polish:factorizable}. Clearly, $\mathscr{F}$ is countably closed.
To show that $\mathscr{F}$ is cofinal in $(\mathbb{H}_X,\preceq)$, 
take $g\in \mathbb{H}_X$.
By \cite[Corollary 1.7.8]{AT}, there exist a countable set $J\subseteq I$ and a continuous mapping $h:\pi _J(G)\to g(X)$ such that 
$g=h\circ p_J$.
Thus, $p_J\in \mathscr{F}$ and $g\preceq p_J$.
 
Being a dense subgroup of $H$, $X$ is $\R$-factorizable
by \cite[Corollary 8.1.15]{AT}. 
Since $H$ is c.c.c, so is $X$.
Finally, if
$X$ is either \SOp\/ or \Op\/, the hypothesis of Theorem \ref{SOp:are*polish:factorizable} are satisfied. Applying this theorem, we conclude that
the set $\mathscr{P}(\mathscr{F})
=\{f\in\mathscr{F}: f(X)$ is a Polishable group$\}$ 
is cofinal in $(\mathscr{F},\preceq)$. 

Let $J$ be a countable subset of $I$. 
Since $p_J\in\mathscr{F}$ and $\mathscr{P}(\mathscr{F})$ is 
cofinal in $(\mathscr{F},\preceq)$, there exists a countable subset $M\subseteq I$ such that $p_J\preceq p_M$ and $p_M(X)$ is a Polishable group. 
Since $p_J\preceq p_M$, there exists a continuous function
$h:p_M(X)\to p_J(X)$ satisfying $p_J=h\circ p_M$.
Since $p_J$ is a homomorphism, so is $h$. 
Let $\tilde{h}:\varrho(p_M(X))\to \varrho(p_J(X))$ be the continuous homomorphism extending $h$ over the completions.

Since both $\pi_J$ and $\pi_M$ are continuous and $X$ is a dense subgroup of $H$, $p_J(X)=\pi_J(X)$ is a dense subgroup of $\pi_J(H)=H_J$
and 
$p_M(X)=\pi_M(X)$ is a dense subgroup of $\pi_M(H)=H_M$.
Since both $H_J$ and $H_M$ are 
Raikov complete
groups,
$\varrho(p_J(X))=H_J$ and $\varrho(p_M(X))=H_M$.
This shows that $\tilde{h}: H_M\to H_J$.

Note that $\tilde{h}\circ \pi_M\restriction_X=
\tilde{h}\circ p_M=h\circ p_M=p_J=\pi_J\restriction_X$.
Since $X$ is dense in $H$, this means that $\tilde{h}\circ \pi_M=\pi_J$.

We claim that $J\subseteq M$. Indeed, suppose the contrary.
Then we can find $j\in J\setminus M$.
Since $H_j$ is non-trivial, there exists $y_j\in H_j$ with $y_j\not= e_j$, where $e_j$ is the identity element of $H_j$.
Let $g=(g_i)_{i\in I}\in H$ be an element defined 
by $g_j=x_j$ and $g_i=e_i$ for all $i\in I$ with $i\not= j$, where 
$e_i$ is the identity element of $H_i$.
Then
$\pi_J(g)\not=e_J$, as $g_j=x_j\not=e_j$ for  $j\in J$.
On the other hand, since $j\not\in M$, we have $g_i=e_i$ for all
$i\in M$, which implies $\pi(g)=e_M$
and $\tilde{h}\circ \pi(g)=\tilde{h}(e_M)=e_J$, as $\tilde{h}$ is a homomorphism. We have shown that
$\pi_J(g)\not=e_J=\tilde{h}\circ \pi(g)$; that is, $\pi_J\not=\tilde{h}\circ \pi_M$. This contradiction shows that $J\subseteq M$.

Let $\pi^M_J:H_M\to H_J$ be the projection corresponding to the inclusion $J\subseteq M$.

Since $X$ is dense in $H$ and $\pi_M$ is continuous, $\pi_M(X)$ is dense in $H_M$.
Since $\pi_M$ is a homomorphism, $\pi_M(X)$ is a subgroup of $H_M$. Since $\pi_M(X)=p_M(X)$ is \v{C}ech-complete, it follows from
Corollary \ref{dense_cech} that $\pi_M(X)=H_M$.
Therefore, $p_J(X)=\pi_J(X)=\pi^M_J\circ\pi_M(X)=\pi^M_J(\pi_M(X))
=\pi^M_J(H_M)=H_J$.
\end{proof}

Returning back to the proof of the theorem, we consider the general case when $H_i$ are separable metric groups.
For each $i\in I$, $H_i$ is dense in its Raikov completion $\varrho(H_i)$. Therefore, $H=\prod_{i\in I}  H_i$ is dense in
$\tilde{H}=\prod_{i\in I} \varrho(H_i)$. Since $X$ is dense in $H$ by our 
assumption, $X$ is a dense subgroup of $\tilde{H}$.
Applying Claim \ref{Raikov:complete:case} to the dense subgroup 
$X$ of the product $\tilde{H}=\prod_{i\in I} \varrho(H_i)$, 
we conclude that
$p_J(X)=\prod_{i\in J} \varrho(H_i)$ for every countable subset $J$ of $I$.
Since $X\subseteq \prod_{i\in I} H_i$, it follows that
$\prod_{i\in J} H_i\subseteq \prod_{i\in J} \varrho(H_i)=p_J(X)\subseteq \prod_{i\in J} H_i$.
We have checked that
$p_J(X)=\prod_{i\in J} H_i=\prod_{i\in J} \varrho(H_i)$
for every countable set $J\subseteq I$.
In particular, for $J=\{i\}$, we conclude that $H_i=\varrho(H_i)$;
that is, $H_i$ is Raikov complete.
It remain only to note that Raikov complete separable metric groups are \v{C}ech-complete;
see \cite[Proposition 4.3.8]{AT}.
\endproof

\begin{notation}
Let $X=\prod_{i\in I} X_i $ be a product of non-empty topological spaces $X_i$.
For each non-empty set $J\subseteq I$, we denote by $\pi_J$
the projection of $X$ into its subproduct $X_J=\prod_{i\in J} X_i$.
\end{notation}

\begin{definition}
A subset $B$ of the product $\prod_{i\in I} X_i$ shall be called {\em basic\/}
if $B=\prod_{i\in I} B_i$, where $B_i\subseteq X_i$ for all $i\in I$ and the set 
$\mathrm{supp}(B)=\{i\in I: B_i\not= X_i\}$ is finite.
\end{definition}

\begin{remark}
\label{remarks:closures:and:interiors}
If $B=\prod_{i\in I} B_i$ is a basic subset of $X=\prod_{i\in I} X_i$,
then both
$$
\Cl_{X}(B)=\prod_{i\in I} \Cl_{X_i} (B_i)
\
\mbox{ and }
\
\Int_{X}(B)=\prod_{i\in I} \Int_{X_i} (B_i)
$$
are basic subsets of $X$ such that
$\mathrm{supp}(\Cl_X(B))\subseteq \mathrm{supp}(B)$
and 
$\mathrm{supp}(\Int_X(B))= \mathrm{supp}(B)$.
\end{remark}

\begin{lemma}
\label{basic:sets}
Let $X=\prod_{i\in I} X_i $ be a product of non-empty topological spaces $X_i$,
and let $Y$ be a subspace of $X$ such that $\pi_J(Y)=X_J$ for every finite set $J\subseteq I$. Then:
\begin{itemize}
\item[(i)] $\Cl_Y(B\cap Y)=\Cl_X(B)\cap Y$ for every basic subset $B$ of $X$;
\item[(ii)] if $A$ and $B$ are basic subsets of $X$ such that $A\cap Y\subseteq B\cap Y$, then $A\subseteq B$. 
\end{itemize}
\end{lemma}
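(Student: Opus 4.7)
The plan is to prove both parts by exploiting the hypothesis $\pi_J(Y)=X_J$ for every finite $J\subseteq I$, which lets us ``realize'' any prescribed finite-coordinate behavior by an actual point of $Y$. Throughout, I will use Remark~\ref{remarks:closures:and:interiors} freely.

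For part (i), the inclusion $\Cl_Y(B\cap Y)\subseteq \Cl_X(B)\cap Y$ is automatic, since $B\cap Y\subseteq B$ and $\Cl_Y(B\cap Y)=\Cl_X(B\cap Y)\cap Y$. For the reverse inclusion, take $y\in\Cl_X(B)\cap Y$ and an open neighborhood $V$ of $y$ in $Y$; write $V=U\cap Y$ with $U$ open in $X$, and shrink $U$ to a basic open neighborhood $U'=\prod_{i\in I}U_i$ of $y$ with finite support $F$. Let $S=\mathrm{supp}(B)$. Then $U'\cap B=\prod_{i\in I}(U_i\cap B_i)$ is basic with support contained in $F\cup S$, and it is non-empty because $y\in U'\cap\Cl_X(B)$ implies $U'\cap B\neq\emptyset$. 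Pick any $z\in U'\cap B$; since $F\cup S$ is finite, the hypothesis $\pi_{F\cup S}(Y)=X_{F\cup S}$ yields $y'\in Y$ with $\pi_{F\cup S}(y')=\pi_{F\cup S}(z)$. For $i\in F$ this gives $\pi_i(y')\in U_i$, and for $i\in S$ it gives $\pi_i(y')\in B_i$, while for other coordinates $U_i=X_i$ or $B_i=X_i$; hence $y'\in U'\cap B\cap Y\subseteq V\cap (B\cap Y)$. Thus $y\in\Cl_Y(B\cap Y)$.

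For part (ii), write $A=\prod_{i\in I}A_i$ and $B=\prod_{i\in I}B_i$ with finite supports $S_A$ and $S_B$. If $A=\emptyset$, there is nothing to prove, so assume $A\neq\emptyset$; then every $A_i$ is non-empty. Let $F=S_A\cup S_B$. Fix an arbitrary $a\in A$. By the projection hypothesis applied to the finite set $F$, there exists $y\in Y$ with $\pi_F(y)=\pi_F(a)$. Then $y\in A$, since $\pi_i(y)=\pi_i(a)\in A_i$ for $i\in S_A\subseteq F$ and $A_i=X_i$ for $i\notin S_A$; hence $y\in A\cap Y\subseteq B\cap Y$, which forces $\pi_i(y)\in B_i$ for all $i\in S_B\subseteq F$. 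Consequently $\pi_i(a)=\pi_i(y)\in B_i$ for $i\in S_B$, while $\pi_i(a)\in X_i=B_i$ for $i\notin S_B$, so $a\in B$. This establishes $A\subseteq B$.

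The main (very mild) obstacle is the bookkeeping with supports: in part (i) one must check that the basic set $U'\cap B$ has finite support and use this to invoke the hypothesis on the correct finite set $F\cup S$, and in part (ii) one must realize a given element of $A$ as agreeing with some element of $Y$ on the union $S_A\cup S_B$ of the two supports. Both steps are straightforward once one has chosen the finite index set to be the union of all finite supports involved.
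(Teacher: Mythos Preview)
Your proof is correct and follows essentially the same approach as the paper's: in both parts you take the finite index set to be the union of the relevant supports, use the projection hypothesis to realize a point of $Y$ with prescribed coordinates on that set, and then check membership in the basic sets coordinatewise. The only cosmetic difference is that in part (i) you begin with an arbitrary open neighborhood in $Y$ and shrink it to a basic open set in $X$, whereas the paper starts directly with a basic open neighborhood in $X$; the arguments are otherwise identical.
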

\begin{proof}
(i) The inclusion $\Cl_Y(B\cap Y)\subseteq\Cl_X(B)\cap Y$ is clear.
Let us check the inclusion $\Cl_X(B)\cap Y\subseteq \Cl_Y(B\cap Y)$.
Let $y\in \Cl_X(B)\cap Y$ be arbitrary.
Let $V$ be a basic open neighbourhood of $y$ in $X$.
Since $y\in \Cl_X(B)$, we have $B\cap V\not=\emptyset$,
so $x\in B\cap V$ for some $x=(x_i)_{i\in I}\in X$.
Since $B$ and $V$ are basic sets, $J=\mathrm{supp}(B)\cup \mathrm{supp}(V)$ is finite. Since $\pi_J(Y)=X_J$ by our assumption, 
there exists $z=(z_i)_{i\in I}\in Y$ such that $z_i=x_i$ for all $i\in J$.
Now $x\in B\cap V$ implies $z\in B\cap V$.
Thus, $z\in (B\cap Y)\cap V\not=\emptyset$. 
We showed that $(B\cap Y)\cap V\not=\emptyset$ for every basic open neighbourhood $V$ of $y$ in $X$. This implies $y\in \Cl_Y(B\cap Y)$.

(ii) Since $A$ and $B$ are basic, the set $J=\mathrm{supp}(A)\cup \mathrm{supp}(B)$ is finite. Let $a=(a_i)_{i\in I}\in A$ be arbitrary.
Since $\pi_J(Y)=X_J$ by our assumption, 
there exists $y=(y_i)_{i\in I}\in Y$ such that $y_i=a_i$ for all $i\in J$.
Note that $y\in A$, as $a\in A$ and $\mathrm{supp}(A)\subseteq J$.
Therefore, $y\in A\cap Y$. Since $A\cap Y\subseteq B\cap Y$ by the assumption of (ii), it follows that 
$y\in B$. Since $\mathrm{supp}(B)\subseteq J$ and $y_i=a_i$ for all $i\in J$, 
we conclude that $a\in B$.
\end{proof}

\begin{lemma}\label{product_sequences}
Let $X=\prod_{i\in I} X_i $ be a product of non-empty topological spaces $X_i$.
For every $n\in \N$ and each $i\in I$, let 
$\{\cal B_{i,n}:n\in\N\}$ be a centered sequence
in $X_i$ such that $X_i\in \cal B_{i,n}$.
For every $n\in\N$, 
define
\begin{equation}
\label{rectangular:products}
\B_n=\{B\subseteq X: B=\prod_{i\in I} B_i
\mbox{ is a basic subset of } X
\mbox{ such that }
B_i\in \cal B_{i,n}
\mbox{ for all }
i\in I\}.
\end{equation}
Let $Y$ be a subspace of $X$ such that
$\pi _J(Y)=X_J$ 
for every at most countable subset $J$ of $I$.
Define
\begin{equation}
\label{eq:C_n}
\cal C_n=\{B\cap Y:B\in\cal B_n\}
\ 
\mbox{ for every }
n\in\N.
\end{equation}
Then
$\{\cal C_n:n\in \N\}$ is a centered sequence in $Y$.
\end{lemma}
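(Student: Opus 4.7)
My plan is to fix a nested sequence $\{C_n:n\in\N\}$ in $Y$ with $C_n \in \cal C_n$, write each $C_n$ as $B_n \cap Y$ where $B_n = \prod_{i\in I} B_{i,n}$ is basic with $B_{i,n}\in\cal B_{i,n}$, transfer the nested condition from $Y$ back to $X$ to obtain a nested sequence at every coordinate, and then pull back a common point through the countable-support density assumption. Note that centeredness of $\{\cal B_{i,n}:n\in\N\}$ (applied to the constant sequence $\emptyset$) forces $\emptyset \notin \cal B_{i,n}$, so each $B_n$ is non-empty.

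The first key step is to establish the identity $\Int_Y(B\cap Y)=\Int_X(B)\cap Y$ for any basic subset $B$ of $X$. The nontrivial inclusion follows by taking, for $y\in\Int_Y(B\cap Y)$, a basic open neighborhood $V$ of $y$ inside a witnessing open set of $X$ and applying Lemma \ref{basic:sets}(ii) to $V\cap Y\subseteq B\cap Y$, which yields $V\subseteq B$. Combined with Lemma \ref{basic:sets}(i), the nested condition $\Cl_Y(C_{n+1})\subseteq\Int_Y(C_n)$ rewrites as
$$
\Cl_X(B_{n+1})\cap Y \subseteq \Int_X(B_n)\cap Y.
$$
By Remark \ref{remarks:closures:and:interiors}, both $\Cl_X(B_{n+1})$ and $\Int_X(B_n)$ are basic sets, so a second application of Lemma \ref{basic:sets}(ii) gives $\Cl_X(B_{n+1})\subseteq \Int_X(B_n)$. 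Projecting to the $i$-th coordinate yields $\Cl_{X_i}(B_{i,n+1})\subseteq\Int_{X_i}(B_{i,n})$, so $\{B_{i,n}:n\in\N\}$ is nested in $X_i$, and the centeredness of $\{\cal B_{i,n}:n\in\N\}$ produces some $x_i \in \bigcap_{n\in\N} B_{i,n}$. Assembling, $x=(x_i)_{i\in I}\in\bigcap_{n\in\N}B_n$.

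To land inside $Y$, observe that $S=\bigcup_{n\in\N}\mathrm{supp}(B_n)$ is a countable union of finite sets, hence countable. The hypothesis $\pi_S(Y)=X_S$ furnishes $y\in Y$ with $\pi_S(y)=\pi_S(x)$. Since $B_{i,n}=X_i$ whenever $i\notin\mathrm{supp}(B_n)\subseteq S$, we have $y_i\in B_{i,n}$ for every $i\in I$ and $n\in\N$, hence $y \in \bigcap_{n\in\N}(B_n\cap Y) = \bigcap_{n\in\N}C_n$, proving centeredness of $\{\cal C_n:n\in\N\}$. The only genuinely delicate point is the identity $\Int_Y(B\cap Y)=\Int_X(B)\cap Y$ for basic $B$ (which fails for arbitrary subsets); the finite-support density hypothesis $\pi_J(Y)=X_J$ is exactly what makes Lemma \ref{basic:sets}(ii) applicable to basic open neighborhoods, and the countable-support version is what lets the assembled point be corrected to lie in $Y$.
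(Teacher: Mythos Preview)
Your proof is correct and follows essentially the same route as the paper: lift the nested condition from $Y$ to $X$ via Lemma~\ref{basic:sets}, project coordinatewise, invoke centeredness in each factor, and correct the resulting point into $Y$ using the countable-projection hypothesis. The only differences are organizational---you isolate the identity $\Int_Y(B\cap Y)=\Int_X(B)\cap Y$ for basic $B$ and then apply Lemma~\ref{basic:sets}(ii) once to the basic sets $\Cl_X(B_{n+1})$ and $\Int_X(B_n)$, whereas the paper argues this step pointwise (and first records as a separate claim that $\{\B_n\}$ itself is centered in $X$). One small quibble: your parenthetical that centeredness ``applied to the constant sequence $\emptyset$'' forces $\emptyset\notin\cal B_{i,n}$ only rules out $\emptyset$ lying in \emph{every} $\cal B_{i,n}$, not in a single one; but this remark is inessential to your argument, and the paper's proof makes the same tacit step when passing from $\Cl_X(B_{n+1})\subseteq\Int_X(B_n)$ to the coordinatewise inclusions.
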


\proof
We first prove the following 
\begin{claim}
\label{centered:claim}
The sequence $\{\B_n:n\in\N\}$ is centered in $X$.
\end{claim}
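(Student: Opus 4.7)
The plan is to decompose the product $X = \prod_{i \in I} X_i$ coordinate by coordinate: given a nested sequence of basic sets in $X$, extract the $i$-th coordinate projections, show these form nested sequences in $X_i$, apply the centeredness of $\{\B_{i,n} : n \in \N\}$ in $X_i$ to choose a point $x_i$ in each coordinate intersection, and reassemble $x = (x_i)_{i \in I}$ as the required witness for $\bigcap_n B^{(n)} \neq \emptyset$.

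In detail, fix an arbitrary nested sequence $\{B^{(n)} : n \in \N\}$ with $B^{(n)} \in \B_n$, and write $B^{(n)} = \prod_{i \in I} B_i^{(n)}$ with $B_i^{(n)} \in \B_{i,n}$ and $B_i^{(n)} = X_i$ for all but finitely many $i$. By Remark \ref{remarks:closures:and:interiors}, the condition $\Cl_X(B^{(n+1)}) \subseteq \Int_X(B^{(n)})$ becomes the product inclusion $\prod_{i \in I} \Cl_{X_i}(B_i^{(n+1)}) \subseteq \prod_{i \in I} \Int_{X_i}(B_i^{(n)})$; provided each $B_i^{(n+1)}$ is non-empty, this forces the coordinatewise inclusion $\Cl_{X_i}(B_i^{(n+1)}) \subseteq \Int_{X_i}(B_i^{(n)})$ for every $i \in I$. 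Hence for each fixed $i \in I$, the sequence $\{B_i^{(n)} : n \in \N\}$ is nested in $X_i$ with $n$-th term in $\B_{i,n}$; the hypothesis $X_i \in \B_{i,n}$ handles precisely those indices $n$ for which $i \notin \mathrm{supp}(B^{(n)})$.

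Centeredness of $\{\B_{i,n}\}$ in $X_i$ now furnishes a point $x_i \in \bigcap_{n \in \N} B_i^{(n)}$ for each $i \in I$, and the tuple $x = (x_i)_{i \in I}$ satisfies $x \in \prod_i B_i^{(n)} = B^{(n)}$ for every $n$, giving the desired non-empty intersection. The only delicate step is the non-emptiness aside above: if some $B^{(n_0)}$ were empty, then the chain of inclusions $\Cl_X(B^{(n+1)}) \subseteq \Int_X(B^{(n)}) = \emptyset$ would propagate emptiness through the tail of the sequence and make $\bigcap_n B^{(n)}$ empty. In the intended applications of Lemma \ref{product_sequences}, the families $\B_{i,n}$ will consist of non-empty sets (typically pseudobase elements), so this pathology does not arise; formally, one may also remove empty members from each $\B_{i,n}$ at the outset without affecting either centeredness or the assumption $X_i \in \B_{i,n}$, after which the three-step coordinate-wise argument is entirely routine. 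The ``main obstacle'' is therefore not really an obstacle at all, but rather careful bookkeeping with the product formulas for closure and interior of basic sets.
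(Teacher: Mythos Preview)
Your argument is correct and follows essentially the same route as the paper's proof: decompose a nested sequence of basic sets coordinatewise, deduce that each coordinate sequence $\{B_i^{(n)}:n\in\N\}$ is nested in $X_i$, apply centeredness of $\{\B_{i,n}:n\in\N\}$ to obtain a point $x_i$, and reassemble $x=(x_i)_{i\in I}$. The only difference is that you are more explicit than the paper about the non-emptiness caveat needed to pass from $\prod_i \Cl_{X_i}(B_i^{(n+1)})\subseteq\prod_i \Int_{X_i}(B_i^{(n)})$ to the coordinatewise inclusions; the paper simply asserts this step without comment.
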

\begin{proof}
Let $\{B_n:n\in\N\}$ be a nested sequence in $X$ such that $B_n\in\cal B_n$ for each $n\in \N$.  
For $n\in\N$, we can use \eqref{rectangular:products}
to fix 
$B_{i,n}\in \B_{i,n}$ for every $i\in I$ such that
\begin{equation}
\label{eq:B_n:product}
B_n=\prod_{i\in I} B_{i,n}.
\end{equation}

Let $n\in\N$.
Since the sequence $\{B_n:n\in\N\}$ is nested,
$\cl _X(B_{n+1})\subseteq \Int_X(B_n)$.
Since
$$
\Cl_X(B_n)=\prod_{i\in I} \Cl_{X_i}(B_{i,n})
\ 
\mbox{ and }
\ 
\Int_X(B_n)=\prod_{i\in I} \Int_{X_i}(B_{i,n}),
$$
we conclude that
$\Cl_{X_i}(B_{i,n+1})\subseteq \Int_{X_i}(B_{i,n})$
for every $i\in I$. 

Let $i\in I$ be arbitrary. The above argument shows that the sequence
$\{B_{i,n}:n\in\N\}$ is nested. Since $B_{i,n}\in \B_{i,n}$ for every $n\in\N$
and the sequence $\{\B_{i,n}:n\in\N\}$ is centered in $X_i$, we can choose
\begin{equation}
\label{point]in]intersection}
x_i\in\bigcap_{n\in \N} B_{i,n}.
\end{equation} 
From \eqref{eq:B_n:product} and \eqref{point]in]intersection}, we conclude 
that 
$x=(x_i)_{i\in I}\in \bigcap_{n\in\N}B_n\not=\emptyset$. 
This proves that $\{\cal B_n:n\in\N\}$ is a 
centered sequence in $X$.
\end{proof}

We are going to show that
$\{\cal C_n:n\in\N\}$ is a centered sequence in $Y$, where 
$\cal C_n$ are as defined in
\eqref{eq:C_n}.
Let $\{C_n:n\in\N\}$ be a nested sequence in $Y$ such that 
$C_n\in{\cal C}_n$ for every $n\in\N$.
Use \eqref{eq:C_n} to fix $B_n\in\B_n$ such that 
$B_n\cap Y=C_n$ for every $n\in\N$.
Then 
\begin{equation}
\label{nested:in:Y}
\Cl _Y(B_{n+1}\cap Y)=\Cl_Y(C_{n+1})\subseteq \Int_Y(C_n)=\Int_Y(B_n\cap Y)
\ 
\mbox{ for each }
\ 
n\in \N.
\end{equation}  

\begin{claim}
\label{nested:claim}
The sequence $\{B_n:n\in\N\}$ is nested in $X$.
\end{claim}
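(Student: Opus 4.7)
The goal is to verify that the sequence $\{B_n:n\in\N\}$, whose traces $C_n=B_n\cap Y$ form a nested sequence in $Y$ by \eqref{nested:in:Y}, is itself nested in $X$. The strategy is to transfer the nested condition from $Y$ back to $X$ using the two principal features at our disposal: closures and interiors of basic sets remain basic (Remark~\ref{remarks:closures:and:interiors}), and Lemma~\ref{basic:sets} allows us to pass between a basic set and its trace on $Y$ because $\pi_J(Y)=X_J$ holds for every countable---and in particular every finite---subset $J$ of $I$.

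Fix $n\in\N$. We wish to show $\Cl_X(B_{n+1})\subseteq\Int_X(B_n)$. Both sides are basic subsets of $X$ by Remark~\ref{remarks:closures:and:interiors}, so by Lemma~\ref{basic:sets}(ii) it suffices to establish the traced inclusion
$$\Cl_X(B_{n+1})\cap Y\subseteq \Int_X(B_n)\cap Y.$$
The left-hand side equals $\Cl_Y(B_{n+1}\cap Y)=\Cl_Y(C_{n+1})$ by Lemma~\ref{basic:sets}(i), and by \eqref{nested:in:Y} we have $\Cl_Y(C_{n+1})\subseteq \Int_Y(C_n)=\Int_Y(B_n\cap Y)$. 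Thus the task reduces to proving the inclusion $\Int_Y(B_n\cap Y)\subseteq \Int_X(B_n)\cap Y$.

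This last inclusion is the only delicate step, since the interior of a trace need not, a priori, equal the trace of the interior. Here however $B_n$ is basic, and Lemma~\ref{basic:sets}(ii) will bridge the gap. Given $y\in\Int_Y(B_n\cap Y)$, choose an open set $U$ of $X$ with $y\in U$ and $U\cap Y\subseteq B_n\cap Y$, and then a basic open set $W$ of $X$ with $y\in W\subseteq U$. Then $W\cap Y\subseteq B_n\cap Y$, so Lemma~\ref{basic:sets}(ii), applied to the basic pair $(W,B_n)$, yields $W\subseteq B_n$; hence $y\in W\subseteq \Int_X(B_n)$, and also $y\in Y$, as desired.

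Combining the three inclusions gives $\Cl_X(B_{n+1})\cap Y\subseteq \Int_X(B_n)\cap Y$, and one final application of Lemma~\ref{basic:sets}(ii) to the basic pair $(\Cl_X(B_{n+1}),\Int_X(B_n))$ yields $\Cl_X(B_{n+1})\subseteq \Int_X(B_n)$. Since $n$ was arbitrary, $\{B_n:n\in\N\}$ is nested in $X$ by Definition~\ref{def:nested}. The main obstacle, as anticipated, is the passage from interior-in-$Y$ to interior-in-$X$, and it is precisely the finite-support structure of basic sets combined with the coordinate-surjectivity hypothesis $\pi_J(Y)=X_J$ that makes this step work via Lemma~\ref{basic:sets}(ii).
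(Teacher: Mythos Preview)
Your proof is correct and follows essentially the same approach as the paper's. The only difference is organizational: the paper carries out the final step pointwise (taking $x\in\Cl_X(B_{n+1})$, replacing it by a $y\in Y$ agreeing on $J=\mathrm{supp}(B_{n+1})\cup\mathrm{supp}(B_n)$, and then pulling back), whereas you package that same manoeuvre as a second application of Lemma~\ref{basic:sets}(ii) to the basic pair $(\Cl_X(B_{n+1}),\Int_X(B_n))$, which is a slightly cleaner way to phrase the identical idea.
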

\begin{proof}
Let $n\in\N$ be arbitrary.
We want to show that $\Cl _X(B_{n+1})\subseteq \Int_X(B_n)$.
Note that 
\begin{equation}
\label{eq:two:inclusions}
\Cl _X(B_{n+1})\cap Y=\Cl _Y(B_{n+1}\cap Y)\subseteq \Int_Y(B_n\cap Y)
\end{equation}
by Lemma \ref{basic:sets}(i) and \eqref{nested:in:Y}.

Take a point $x=(x_i)_{i\in I}\in \Cl _X(B_{n+1})$. 
The set $J=\mathrm{supp}(B_{n+1})\cup \mathrm{supp}(B_n)$ is finite. Since $\pi_J(Y)=X_J$
by our assumption,
there exists $y=(y_i)_{i\in I}\in Y$ such that 
$y_i=x_i$ for every 
$i\in J$.
Since $x\in \Cl _X(B_{n+1})$ and $\mathrm{supp}(\Cl _X(B_{n+1}))\subseteq \mathrm{supp}(B_{n+1})\subseteq J$
by Remark \ref{remarks:closures:and:interiors}, this gives $y\in  \Cl _X(B_{n+1})$.
Combining this with 
\eqref{eq:two:inclusions},
we get
$y\in \Int_Y(B_n\cap Y)$. Therefore,
$y\in V\cap Y\subseteq B_n\cap Y$ for some basic open neighbourhood $V$ of 
$y$ in $X$.
Now $V\subseteq B_n$ by Lemma \ref{basic:sets}(ii),
so $y\in \Int_X(B_n)$.
Since $\mathrm{supp}(\Int_X(B_n))\subseteq \mathrm{supp}(B_n)\subseteq J$ by Remark \ref{remarks:closures:and:interiors}
and $x_i=y_i$ for all $i\in J$, this gives $x\in \Int_X(B_n)$.
\end{proof}

Since $\{\cal B_n:n\in\N\}$ is a centered sequence in $X$
by Claim \ref{centered:claim}, and $B_n\in\cal B_n$ for all $n\in\N$,
from Claim \ref{nested:claim} we conclude that 
$\bigcap _{n\in\N}B_n\not=\emptyset$.
Fix $x=(x_i)_{i\in I}\in \bigcap _{n\in\N}B_n$.
The set $J=\bigcup_{n\in\N} \mathrm{supp}(B_n)$ is at most countable.
Since $\pi_J(Y)=X_J$ by our assumption, there exists 
$y\in Y$ such that $y_i=x_i$ for all $i\in J$.
Then
$y\in Y\cap \bigcap _{n\in\N}B_n=\bigcap _{n\in\N}(B_n\cap Y)
=\bigcap _{n\in\N}C_n\not=\emptyset$.
This proves that  $\{\cal C_n:n\in\N\}$ is a centered sequence in $Y$.
\endproof

For countably compact families we have the following lemma.

\begin{lemma}
\label{product_sequences2}
Let $X=\prod_{i\in I} X_i $ be a product of non-empty sets $X_i$.
For every $i\in I$, let 
$\cal B_i$ be a \cc\/ family
in $X_i$ such that $X_i\in \cal B_{i}$.
Define
\begin{equation}
\label{rectangular:products:2}
\B=\{B\subseteq X: B=\prod_{i\in I} B_i
\mbox{ is a basic subset of } X
\mbox{ such that }
B_i\in \cal B_i
\mbox{ for all }
i\in I\}.
\end{equation}
Let $Y$ be a subset of $X$ such that
$\pi _J(Y)=X_J$ 
for every at most countable subset $J$ of $I$.
Then 
$\cal C=\{B\cap Y:B\in\cal B\}$
is a \cc\ family in $Y$ such that $Y\in{\cal C}$.
\end{lemma}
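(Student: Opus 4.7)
The plan is to mirror the proof of Lemma \ref{product_sequences}, which should actually be simpler here since we are dealing with pure set-theoretic decreasing sequences rather than nested sequences (so no closures or interiors intervene). First I would dispose of the claim $Y \in \mathcal{C}$: the full product $X = \prod_{i \in I} X_i$ is itself a basic set with $\mathrm{supp}(X) = \emptyset$, and since $X_i \in \mathcal{B}_i$ for every $i$ by hypothesis, we have $X \in \mathcal{B}$; hence $Y = X \cap Y \in \mathcal{C}$.

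Next, to establish countable compactness of $\mathcal{C}$, I would take a decreasing sequence $\{C_n : n \in \N\} \subseteq \mathcal{C}$ and choose, for each $n$, a basic set $B_n = \prod_{i \in I} B_{i,n} \in \mathcal{B}$ with $B_{i,n} \in \mathcal{B}_i$ and $B_n \cap Y = C_n$. The first key step is to show that the sequence $\{B_n : n \in \N\}$ is itself decreasing in $X$. This is the analogue of Lemma \ref{basic:sets}(ii): given $a = (a_i)_{i \in I} \in B_{n+1}$, the set $J = \mathrm{supp}(B_{n+1}) \cup \mathrm{supp}(B_n)$ is finite, so by hypothesis $\pi_J(Y) = X_J$ yields some $y \in Y$ with $\pi_J(y) = \pi_J(a)$; since $\mathrm{supp}(B_{n+1}) \subseteq J$ we have $y \in B_{n+1}$, hence $y \in C_{n+1} \subseteq C_n \subseteq B_n$, and since $\mathrm{supp}(B_n) \subseteq J$ and $a$ agrees with $y$ on $J$, we conclude $a \in B_n$.

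Once $B_{n+1} \subseteq B_n$ is established, projecting on each coordinate $i \in I$ gives $B_{i,n+1} \subseteq B_{i,n}$ (using that each factor is non-empty, as $\mathcal{B}_i$ is a family of non-empty sets). Countable compactness of $\mathcal{B}_i$ then provides $x_i \in \bigcap_{n \in \N} B_{i,n}$, so the point $x = (x_i)_{i \in I}$ lies in $\bigcap_{n \in \N} B_n$. The final step is to lift $x$ to a point of $Y$: the set $J = \bigcup_{n \in \N} \mathrm{supp}(B_n)$ is countable, so by hypothesis $\pi_J(Y) = X_J$ yields $y \in Y$ with $\pi_J(y) = \pi_J(x)$, and since each $\mathrm{supp}(B_n) \subseteq J$ this gives $y \in B_n \cap Y = C_n$ for every $n$. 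Hence $\bigcap_{n \in \N} C_n \neq \emptyset$, completing the proof.

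The only non-routine aspect is recognizing that the hypothesis $\pi_J(Y) = X_J$ must be applied twice with different cardinalities of $J$: once for each finite union of supports (to reduce the decreasing condition from $Y$ to $X$), and once for the total countable union of supports (to witness the non-empty intersection back inside $Y$). There is no genuine obstacle, but one must take care that the argument uses only the fact that $\mathcal{B}_i$ consists of non-empty sets and is closed under decreasing intersections, so no topological structure on the $X_i$ is invoked.
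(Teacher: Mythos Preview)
Your proof is correct and essentially self-contained, but it differs from the paper's approach. The paper does not repeat the argument of Lemma \ref{product_sequences}; instead it uses a trick: equip each $X_i$ with the discrete topology, set $\mathcal{B}_{i,n}=\mathcal{B}_i$ for all $n$, and invoke Remark \ref{compact:are:strongly:complete}(i) to see that each $\{\mathcal{B}_{i,n}:n\in\N\}$ is centered. Then Lemma \ref{product_sequences} applies verbatim to give that $\{\mathcal{C}_n:n\in\N\}$ (with all $\mathcal{C}_n=\mathcal{C}$) is centered in $Y$; since in the discrete product every basic set is clopen, so is each $B\cap Y$ in $Y$, and Remark \ref{compact:are:strongly:complete}(ii) converts ``centered'' back to ``countably compact''.

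Your direct argument and the paper's reduction are doing the same work underneath: in the discrete topology ``nested'' collapses to ``decreasing'', so Lemma \ref{product_sequences} specializes exactly to what you wrote out by hand. The paper's route is shorter because it recycles the earlier lemma via the discrete-topology device; your route has the advantage of being purely set-theoretic and not requiring the reader to track why clopenness makes Remarks \ref{compact:are:strongly:complete}(i) and (ii) into a two-way bridge. Either is fine.
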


\proof
For every $i\in I$, equip $X_i$ with the discrete topology.
For each $i\in I$ and every $n\in\N$ define  
$\cal B_{i,n}=\cal B_i$.
By Remark
\ref{compact:are:strongly:complete}(i), the sequence $\{\cal B_{i,n}:n\in \N\}$ is centered in $X_i$.
By Lemma \ref{product_sequences}, 
the sequence $\{\cal B_{n}:n\in \N\}$ as in the conclusion of this lemma 
is centered in $X$. 
Observe that $\cal B= \cal B_{n}$ for every $n\in \N$ and the elements of $\cal B$ are clopen in $X$. 
Therefore, $\cal B$ is \cc\ in $X$ by Remark
\ref{compact:are:strongly:complete}(ii). 
\endproof

\begin{corollary}\label{product:of:Telgarsky:spaces}\label{productSO}
Let $\cal P$ be one of the following properties: 
\Telgarsky\/, \Op\/, \Tp\/, \SOp\/, 
strongly \Telgarsky\/, strongly \Op\/, strongly \Tp\/, strongly \SOp\/,
Telg\'arsky \cc\/, Oxtoby \cc\/, Todd \cc\/ or S\'anchez-Okunev \cc\/.  
Let $Y$ be a subspace of 
the product $X=\prod_{i\in I} X_i $ of non-empty topological spaces $X_i$
such that 
$\pi_J(Y)=X_J$
for every at most countable subset $J$ of $I$.
If $X_i$ has property $\cal P$ for all $i\in I$, 
then $Y$ has property  $\cal P$ as well.
\end{corollary}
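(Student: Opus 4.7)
The plan is to handle all twelve cases simultaneously by pulling the witnessing data from each $X_i$ up to a rectangular product family in $X=\prod_{i\in I}X_i$ and then restricting to $Y$. For each $i\in I$, the hypothesis supplies either a complete sequence $\{\cal B_{i,n}:n\in\N\}$ in $X_i$ (for the eight completeness-type properties, with all $\cal B_{i,n}$ equal in the ``strong'' cases) or a \cc\ family $\cal B_i$ in $X_i$ (for the four \cc\ properties); its members are required to be open sets, zero-sets, or to form a base, as dictated by $\cal P$. By Corollary \ref{no:loss:of:generality} in the completeness case, and by directly adjoining $X_i$ (which preserves every relevant property: being a base, consisting of open sets or zero-sets, and being \cc) in the \cc\ case, we may assume $X_i\in\cal B_{i,n}$ for every $n$, respectively $X_i\in\cal B_i$.

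Next, form the rectangular product families $\cal B_n$ via \eqref{rectangular:products} (respectively $\cal B$ via \eqref{rectangular:products:2}) and set $\cal C_n=\{B\cap Y:B\in\cal B_n\}$ (respectively $\cal C=\{B\cap Y:B\in\cal B\}$). Lemma \ref{product_sequences} immediately gives that $\{\cal C_n:n\in\N\}$ is centered in $Y$, and Lemma \ref{product_sequences2} that $\cal C$ is \cc\ in $Y$; in the ``strong'' cases, since the $\cal B_{i,n}$ do not depend on $n$, neither do $\cal B_n$ nor $\cal C_n$.

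The structural checks on members of $\cal C_n$ (respectively $\cal C$) are routine and case-by-case: if each $\cal B_{i,n}$ consists of open sets, then $B=\prod B_i\in\cal B_n$ is a basic open set of $X$ and so $B\cap Y$ is open in $Y$; if each $\cal B_{i,n}$ consists of zero-sets, writing $B=\bigcap_{i\in\mathrm{supp}(B)}(f_i\circ\pi_i)^{\leftarrow}(0)$ exhibits $B$ as a finite intersection of zero-sets in $X$, so $B\cap Y$ is a zero-set in $Y$; and if each $\cal B_{i,n}$ is a base of $X_i$, then $\cal B_n$ is a base of $X$ and thus $\cal C_n$ is a base of $Y$.

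The one verification that genuinely uses the subspace hypothesis $\pi_J(Y)=X_J$ is that $\cal C_n$ is a pseudobase of $Y$; this is where the (modest) main work lies. Given a non-empty open $W\subseteq Y$, write $W=U\cap Y$ with $U$ open in $X$, choose $y\in W\subseteq U$, and pick a basic open neighbourhood $V=\prod V_i$ of $y$ in $X$ with $V\subseteq U$ and finite support $J$; for $i\in J$, use the pseudobase property of $\cal B_{i,n}$ to pick $B_i\in\cal B_{i,n}$ with $B_i\subseteq V_i$, and take $B_i=X_i$ otherwise, so that $B=\prod B_i\in\cal B_n$ and $B\cap Y\subseteq V\cap Y\subseteq W$. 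To see that every member $B\cap Y\in\cal C_n$ has non-empty interior in $Y$, note that $\Int_X(B)=\prod\Int_{X_i}(B_i)$ is a basic non-empty open subset of $X$ of finite support (non-empty because each $\Int_{X_i}(B_i)$ is non-empty by the pseudobase property when $i\in\mathrm{supp}(B)$, and equals $X_i$ otherwise), and the hypothesis $\pi_J(Y)=X_J$ for finite $J$ forces $\Int_X(B)\cap Y\neq\emptyset$; this intersection is open in $Y$ and contained in $B\cap Y$. Assembling these ingredients with the centeredness/\cc-ness from the two product lemmas delivers the required witness for property $\cal P$ on $Y$ in each of the twelve cases.
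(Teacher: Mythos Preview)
Your proof is correct and follows essentially the same approach as the paper: reduce to product lemmas (Lemma \ref{product_sequences} and Lemma \ref{product_sequences2}) after arranging $X_i\in\cal B_{i,n}$ via Corollary \ref{no:loss:of:generality} (or direct adjunction in the \cc\ case), then verify that the resulting families $\cal C_n$ inherit the required structural property. The only difference is cosmetic: the paper leaves the pseudobase/base/open/zero-set verification as ``one can easily check'', whereas you spell out the pseudobase argument in detail (including the use of $\pi_J(Y)=X_J$ for finite $J$ to ensure $\Int_Y(B\cap Y)\neq\emptyset$).
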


\proof
For 
every 
$i\in I$, 
let 
$\{\cal B_{i,n}:n\in\N\}$ 
be 
a centered sequence in $X_i$ such that each $\B_{i,n}$
for $n\in\N$
is one of the following:
\begin{enumerate}
\item a base of $X_i$;
\item a pseudobase of $X_i$ consisting of open sets;
\item a pseudobase of $X_i$;
\item a pseudobase of $X_i$ consisting of zero-sets.
\end{enumerate}
By Corollary \ref{no:loss:of:generality}, we may assume that $X_i\in \cal B_{i,n}$ whenever $n\in \N$ and $i\in I$.
By Lemma \ref{product_sequences}, 
the sequence
$\{\cal C_{n}:n\in\N\}$ as in the conclusion of that lemma
is centered in $Y$. Furthermore, one can  easily check that each $\cal C_n$ for $n\in\N$ has the correspondent property (1)--(4)
above (with $X_i$ replaced by $Y$).
This argument proves our corollary in case of 
\Telgarsky ness, \Op ness, \Tp ness and \SOp ness, respectively.

Assume, in addition, that all $\B_{i,n}$ ($n\in\N$) are the same, for each $i\in I$.
Then the resulting ${\cal C}_n$ ($n\in\N$) from the conclusion of Lemma\ref{product_sequences} are also the same.
This proves our corollary in case of strong
\Telgarsky ness, strong \Op ness, strong \Tp ness and strong \SOp ness, respectively.

Finally, for every $i\in I$,
let $\cal B_{i}$ be a \cc\/ 
family of subsets of $X_i$
which 
is one of (1)--(4) above.
Without loss of generality, we may assume that $X_i\in \cal B_{i}$
for $i\in I$. 
By Lemma \ref{product_sequences2}, 
 the family $\cal C$ as in the conclusion of that lemma
is a \cc\/ 
family in $Y$. 
Moreover, $\cal C$  has the correspondent property (1)--(4)
above (with $X_i$ replaced by $Y$).
This argument proves our corollary in case of 
Telg\'arsky \ccness, Oxtoby \ccness, Todd \ccness\ and S\'anchez-Okunev \ccness. 
\endproof

\begin{remark}\label{countably_projection:G_delta}
{\em Let $Y$ be a subspace of the product $X=\prod_{i\in I}X_i$ of metric spaces. Then the following conditions are
equivalent:

(i) 
$\pi _J(Y)=X_J$
for every  countable subset $J$ of $I$;

(ii) $Y$ is $G_\delta $-dense in $X$.} 

Indeed, (i) always implies (ii), and to prove that (ii) implies (i) it is enough to observe that all one-point subsets of $X_i$ are
$G_\delta $-sets in $X_i$ for every $i\in I$. 
\end{remark}

\medskip
\noindent
{\bf Proof of Theorem \ref{equivalence:for:dense:subgroups:without:cech}:}
(i)$\Rightarrow $(viii) and (ii)$\Rightarrow $(viii). Since $H$ is c.c.c, $G$ is also c.c.c. 
By \cite[Corollary 8.1.15]{AT}, $G$ is $\R$-factorizable. By Corollary \ref{polish_fact}, $G$ is Polish factorizable.

(viii)$\Rightarrow $(ix). 
It follows from Theorem \ref{O-omega-discrete}(a) and Remark
\ref{countably_projection:G_delta} that 
$G$ is $G_\delta $-dense in $H$.
Moreover, by item (b) of Theorem \ref{necessary:condition:for:completeness:of:a:dense:subgroup},
$H_i$ must be \v{C}ech-complete for every $i\in I$.
 
(ix)$\Rightarrow $(iii), (ix)$\Rightarrow $(vi) and (ix)$\Rightarrow $(vii) 
By Theorem \ref{metric:Todd:are:Cech-complete},
$H_i$ is S\'anchez-Okunev \cc\/, Oxtoby \cc\/ and \Telgarsky\/,
for every $i\in I$. 
By Corollary \ref{product:of:Telgarsky:spaces}, $H$ is S\'anchez-Okunev \cc\/, Oxtoby \cc\/ and \Telgarsky\/. 
Again, by Corollary \ref{productSO}, $G$ is S\'anchez-Okunev \cc\/, Oxtoby \cc\/ and \Telgarsky\/.

Using Diagram 2 we can conclude that all the conditions are equivalent.
\qed
\endproof

\section{Open questions}
\label{sec:questions}

It is not clear whether  the Telg\'arsky series of completeness properties can be added to the list of equivalent conditions
in Theorem \ref{coincidence:for:metric:spaces}:
\begin{question}
Let $X$ be a metric space having any of the equivalent conditions  
from Theorem \ref{coincidence:for:metric:spaces}.
Is $X$ \Telgarsky? Is $X$ \sTep? Is $X$ Telg\'arsky \cc?
\end{question}
The answer to the first question of the three is positive when $X$ is a topological group; see Theorem \ref{metric:Todd:are:Cech-complete}. Nevertheless, the answer to the other two questions remains unclear even in case of metric groups.
\begin{question}
Let $X$ be a metric group having any of the equivalent conditions  
from Theorems \ref{coincidence:for:metric:spaces} and \ref{metric:Todd:are:Cech-complete}.
Is $X$ \sTep? Is $X$ Telg\'arsky \cc?
\end{question}

\begin{question}
(i) Is every \v{C}ech-complete space $X$ \sTep? What if one additionally assumes $X$ to be metric?

(ii) Is every \v{C}ech-complete topological group $X$ \sTep? What if one additionally assumes $X$ to be metric?
\end{question}

The authors were unable to determine if zero-dimensionality 
of the space is essential in Proposition 
\ref{zero:dimensional:psc}.

\begin{question}
\label{que:zero:dimensional:psc}
Is every (locally) pseudocompact space Telg\'arsky \cc\ or Oxtoby \cc?
\end{question}

A positive answer to this question would allow one to drop the assumption of zero-dimensionality from Corollary \ref{zero:dimensional:precompact}.

Precompact groups are $\mathbb{R}$-factorizable and $\mathbb{R}$-factorizable groups are $\omega$-bounded in Guran's sense \cite{Guran}.
This suggests  the following natural question:
\begin{question}
\label{omega-bounded:question}
Do some of the equivalences from Theorem \ref{equivalent:conditions:for:precompact} remain valid in case when $G$ is $\mathbb{R}$-factorizable  (or even when $G$ is $\omega$-bounded)? 
\end{question}

Since the real line $\mathbb{R}$ is $\mathbb{R}$-factorizable \cite{Tkachenko},
Example \ref{R:example} shows that an $\mathbb{R}$-factorizable S\'anchez-Okunev \cc\ (separable metric) group need not be weakly pseudocompact.

\begin{question}
Which of the properties (iii)--(x) from Theorem \ref{equivalent:conditions:for:precompact} remain equivalent in subgroups of locally compact groups?
\end{question}

The following question can be considered as a ``heir'' of Conjecture \ref{conjecture:Tkachenko}.

\begin{question}
Let $G$ be a subgroup of a locally compact group. If $G$ has an open neighbourhood whose closure is weakly pseudocompact, does $G$ also has an open neighbourhood with pseudocompact closure?
\end{question}

A particular special case of Question \ref{omega-bounded:question} is especially interesting:
{\em When is a weakly pseudocompact topological group pseudocompact?\/} Due to Theorem 
\ref{equivalent:conditions:for:precompact}, this question is equivalent to asking when a weakly pseudocompact topological group is precompact.

\begin{question}
Is every weakly pseudocompact $\omega$-bounded group precompact (equivalently, pseudocompact)?
Is every weakly pseudocompact $\mathbb{R}$-factorizable group precompact?
\end{question}

The answer is positive for Lindel\"of groups as weakly pseudocompact Lindel\"of spaces are compact.

Theorem \ref{necessary:condition:for:completeness:of:a:dense:subgroup} and 
Diagram 2 suggest the following 

\begin{question}
If some dense subgroup $G$ of a product $\prod_{i\in I} H_i$ of separable metric groups $H_i$ is \Tp, must all groups $H_i$ be \v{C}ech-complete?
\end{question}

\medskip
\noindent
{\bf Acknowledgement:\/} This paper was written during the first listed author's stay at the Department of Mathematics of Faculty of Science of Ehime University (Matsuyama, Japan)
in the capacity of Visiting Foreign Researcher. 
He would like to thank the host institution for its hospitality.

\end{document}